\DeclareMathOperator{\DM}{DM}
\DeclareMathOperator{\DTM}{DTM}
\newcommand{\quot}[1]{\left[#1\right]}
\newcommand{\Bres}{\textup{Bar}}
\newcommand{\ft}{\textup{ft}}
\newcommand{\B}[1]{\textup{B}#1}
\newcommand{\SH}{\textup{SH}}
\newcommand{\KH}{\textup{KH}}
\newcommand{\GZip}{G\textup{-Zip}}
\newcommand{\KGL}{\textup{KGL}}
\newcommand{\DK}{\textup{DK}}
\date{}
\title{Motivic homotopy theory of the classifying stack of finite groups of Lie type}
\author{Can Yaylali \\ \href{mailto:yaylali@mathematik.tu-darmstadt.de}{yaylali@mathematik.tu-darmstadt.de}}
\begin{document}
\maketitle
\begin{abstract}
Let $G$ be a reductive group over $\mathbb{F}_{p}$ with associated finite group of Lie type $G^{F}$. Let $T$ be a maximal torus contained inside a Borel $B$ of $G$. We relate the (rational) Tate motives of $\B{G^{F}}$ with the $T$-equivariant Tate motives of the flag variety $G/B$.\par On the way, we show that for a reductive group $G$ over a field $k$, with maximal Torus $T$ and absolute Weyl group $W$, acting on a smooth finite type $k$-scheme $X$, we have an isomorphism $A^{n}_{G}(X,m)_{\mathbb{Q}}\cong A^{n}_{T}(X,m)_{\mathbb{Q}}^{W}$ extending the classical result of Edidin-Graham to higher equivariant Chow groups in the non-split case.\par
We also extend our main result to reductive group schemes over a regular base that admit maximal tori. Further, we apply our methods to more general quotient stacks. In this way, we are able to compute the motive of the stack of $G$-zips introduced by Pink-Wedhorn-Ziegler for reductive groups over fields of positive characteristic. 
\end{abstract}
\tableofcontents

\section{Introduction}
\numberwithin{equation}{section}
Let $G$ be a reductive group over $\FF_{q}$, a finite field of characteristic $p>0$, and $\varphi\colon G\rightarrow G$ the $q$-Frobenius. Then $G$ acts on itself via $\varphi$-conjugation, i.e. $(g,h)\mapsto gh\varphi(g)^{-1}$. The stabilizer of the neutral element is denoted by $G^{F}$. If $\overline{\FF}_{q}$ denotes an algebraic closure of $\FF_{q}$, then $G^{F}(\overline{\FF}_{q})=G(\FF_{q})$ is a finite group of Lie-type. The representation of finite groups of Lie type over fields of characteristic $0$ was studied by Deligne and Lusztig (cf. \cite{DL}). In their article they construct representations of $G(\FF_{q})$ by an action on the $\ell$-adic cohomology of certain varieties, for $\ell\neq p$. Roughly, the varieties in question are constructed by intersection of Bruhat strata and graph of Frobenius.\par Let us fix a Borel $B$ of $G$.\footnote{Any reductive group over a finite field is quasi-split and thus admits a Borel.} The Bruhat strata of $\quot{G/B}$ are induced by the Bruhat decomposition of $G$ via pullback along $G/B\rightarrow G/B\times^{G}G/B\cong \quot{B\bs G/B}$. In this article, we want to analyze the cohomological connection between $\B{G^{F}}$ and $\quot{B\bs G/B}$, i.e. study how their motivic categories are related.\par
The derived category of $\ell$-adic sheaves $D(\B{G^{F}},\QQ_{\ell})$, for $\ell\neq p$, encodes information about the action of $G^{F}$ on $\ell$-adic cohomology. One can show that $\B{G^{F}}\cong \quot{G/_{\varphi}G}$, where $G$ acts on itself via $\varphi$-conjugation. The restriction of the $\varphi$-conjugation to $B$ yields an adjunction
$$
 \begin{tikzcd}
	D(\quot{G/_{\varphi}G},\QQ_{\ell})\arrow[r,"",shift left = 0.3em]&\arrow[l,"",shift left = 0.3em]D(\quot{G/_{\varphi}B},\QQ_{\ell}).
\end{tikzcd}
$$ 
 On the other hand there is an adjunction 
$$
 \begin{tikzcd}
	D(\quot{G/_{\varphi}B},\QQ_{\ell})\arrow[r,"",shift left = 0.3em]&\arrow[l,"",shift left = 0.3em]D_{B}(G/B,\QQ_{\ell}),
\end{tikzcd}
$$ 
which is induced via the graph of $\varphi$ (here the right hand side denotes the derived category of $B$-equivariant $\QQ_{\ell}$-modules on the flag variety). Thus, it seems natural that the study of these two adjunctions should lead to information about the geometric representation theory of $G^{F}$ and connection to the classical theory of Deligne-Lusztig.\par
Instead of rewriting the theory of Deligne-Lusztig in the derived setting, we want to understand the adjunctions above in the motivic setting with rational coefficients. The idea is that first after $\ell$-adic realization, we get the classical situation back and further this could lead to information about the $\QQ$-representations of $G^{F}$, as we are naturally working with rational coefficients. 

\subsection*{Motives and connection to representation theory}
Motives were famously envisioned by Grothendieck to capture similar behavior of cohomology theories in an abelian category. The construction of such a category is not an easy task and has been studied for many years. The main approach is to define a derived category of motives with the hope to find a $t$-structure on it, so that the heart of this $t$-structure defines the abelian category of motives. To capture functorial behavior on cohomology theories one demands a full six functor formalism for the derived category of motives. There are several versions of the derived category of motives which agree under certain assumptions. One version was constructed by Cisinki and D\'eglise in the case of rational coefficients, which we denote by $\DM$ (cf. \cite{CD1}). They show that the assignment $X\mapsto \DM(X)$ from smooth $k$-schemes indeed admits a six functor formalism $(\otimes \dashv \Homline,f^{*}\dashv f_{*},f_{!}\dashv f^{!})$ and agrees with the classical construction of Morel. In particular, they show that motivic cohomology, i.e. $\Hom_{\DM(X)}(1_{X},1_{X}(n)[m])$ agrees with Bloch's higher Chow groups $A^{n}(X,2n-m)_{\QQ}$. With the help of the $6$-functor formalism, we can define the motive of an $k$-scheme $\pi\colon X\rightarrow \Spec(k)$ resp. the global sections via $$M_{k}(X)\coloneqq \pi_{!}\pi^{!}1_{Y}\quad \textup{resp.}\quad R\Gamma_{k}(X,\QQ)\coloneqq \pi_{*}\pi^{*}1_{k}$$ computing motivic cohomology resp. cohomology. The existence of a $t$-structure is a more delicate problem and in general not known.  Levine shows that for a particular class of schemes $X$, e.g. finite fields or affine spaces over finite fields, a $t$-structure exists on the full triangulated subcategory of Tate motives $\DTM(X)\subseteq \DM(X)$ generated by the $1_{X}(n)$ for $n\in \ZZ$ (cf. \cite{Levine}). Further, using weight structures one can see that $\DTM(\FF_{p})$ is equivalent to the bounded derived category of $\QQ$-vector spaces. We also have realization functors $\textup{real}_{\ell}\colon \DTM(X)\rightarrow D_{\et}(X,\QQ_{\ell})$ for $\ell\neq p$ that is conservative and $t$-exact for the perverse $t$-structure on $D_{\et}(X,\QQ_{\ell})$. Let us remark that if for a morphism $f\colon X\rightarrow Y$ we have $f_{*}1_{X}\in \DTM(Y)$, then this automatically induces an adjunction of $\DTM(X)$ and $\DTM(Y)$. In particular, the adjunction $f^{*}\dashv f_{*}$ restricts to Tate motives. We call morphisms with such a property \textit{Tate}.\par 
Let us now explain the relation between Tate motives and geometric representation theory. For simplicity, let us assume that $G/\FF_{q}$ is a split reductive group. Let $T$ be a split maximal torus inside a Borel $B$ of $G$. In \cite{SW}, Soergel and Wendt show that for schemes stratified by affine spaces, such as the flag variety $G/B$, one can define a subcategory of $\DM$ called \textit{stratified Tate motives} that admits a $t$-structure. This $t$-structure is glued from the $t$-structure on the strata. For $G/B$ with the Bruhat stratification, we will denote the category of stratified Tate motives with $\DTM_{(B)}(G/B)$. Soergel and Wendt show that $\DTM_{(B)}(G/B)$ is equivalent to the bounded derived category $D^{b}(\Ocal^{\ZZ,ev})$ of graded $\Ocal\coloneqq H^{*}(G/B)$-modules concentrated in even degrees. To connect this to representations, we have to go further and endow motivic cohomology with group actions.\par
For this, we need to define equivariant motives. To make sense of the following construction, we need to work in the setting of $\infty$-categories. The idea is to define $\DM(\Xcal)$ for an Artin stack $\Xcal$ via gluing along the atlas. As we essentially have to glue the derived category this only makes sense in the $\infty$-categorical framework. Then this gluing can be defined via right Kan-extension from schemes to Artin stacks (cf. \cite{RS1}). As one would expect motivic cohomology of a quotient stack $[X/G]$, where $X$ is a smooth $k$-scheme and $G$ is a linear algebraic group, yields the equivariant Chow groups $A^{n}_{G}(X,2n-m)$ of Edidin and Graham (cf. \cite{RS1}). In this way, one can also extend the stratified Tate motives to Artin stacks. In the case of the flag variety, Soergel, Virk and Wendt show that $\DTM(\quot{B\bs G/B})$ is equivalent to the bounded derived category of bi-Soergel-modules (cf. \cite{SVW}). Further, they show that applying $K_{0}$ yields an isomorphism to the Iwahori-Hecke algebra and Verdier duality yields the Kazhdan-Lusztig involution. In particular, the stratified Tate motives of $\quot{B\bs G/B}$ with the weight structure and $6$-functor formalism carries information about the $\ell$-adic geometric representations of $G$.

\subsection*{Connection between finite groups of Lie type and their associated flag variety}
As we have seen above the geometric representation theory of the flag variety is linked to stratified Tate motives. This particular connection uses that the flag variety is stratified by affine spaces and that Tate motives behave nicely under this stratification. We expect that the geometric representation theory of $G^{F}$ is linked to Tate motives on $\B{G^{F}}$, the classifying space of $G^{F}$. In this way, we want to relate representation theory of $G^{F}$ with the representation theory of the associated flag variety.\par 
We will do this by linking the Tate motives of $\B{G^{F}}$ to Tate motives on $\quot{B\bs G/B}$ in the following way. The stack $\B{G^{F}}$ is equivalent to $\quot{G/_{\varphi}G}$, where $G$ acts on itself via $\varphi$-conjugation. Let us fix a maximal torus $T\subseteq B$. Let $T$ act on $G$ also via $\varphi$-conjugation. We can embed $T$ into $T\times T$ via $t\mapsto (t,\varphi(t))$. If we now let $T\times T$ act on $G$ via $(t,t',g)\mapsto tgt'^{-1}$, we get a zigzag of Artin stacks
\begin{equation}\tag{1}
\label{intro.q}
\begin{tikzcd}
	\quot{G/_{\varphi}G}&\arrow[l,"a",swap] \quot{G/_{\varphi} T}\arrow[r,"b"]&\quot{G/T\times T}.
\end{tikzcd}
\end{equation}

We will see that we naturally obtain a fully faithful functor $\DM(\quot{G/T\times T})\hookrightarrow \DM(\quot{B\bs G/B})$. Thus, on the level of motivic categories, this zigzag yields adjunctions between $\DM(\B{G^{F}})$ and $\DM(\quot{B\bs G/B})$. Now we can formulate the leading question of this article: 
\begin{equation}\tag{$\ast$}
\label{main-question}
	\textit{Do these adjunctions preserve Tate motives?}
\end{equation}
 If $T$ is split, then the answer to this question is positive for $b$. The non-split case will be discussed later, as we need a modification of the property ``Tate''. For $a$ the question has to be slightly modified. While $a$ does not preserve Tate-motives, the difference is only the action by the Weyl group. In particular, the underlying motive of $\quot{G/_{\varphi}G}$ is equivalent to the Weyl-invariant part of $R\Gamma_{S}(\quot{G/_{\varphi} T},\QQ)$. As taking invariants under a finite group preserves Tate-motives, this gives a sufficient approximation to our question ($*$). This yields a first point to access motivic representation theory of $G^{F}$ via the motivic geometric representation theory of $G$.

\subsection*{Equivariant motivic homotopy theory of reductive groups}
From now on let $\Btilde$ be a an excellent noetherian scheme of dimension $\leq 2$ and $S$ a regular connected $\Btilde$-scheme of finite type. We will first work with reductive $S$-group schemes $H$ that admit split maximal tori $Z$. This is not a classical notion and any such reductive group scheme may not be split or even quasi-split (in sense of \cite{SGA3} or \cite{ConradRed}). But in this case, one has that the Weyl-group scheme $N_{H}(Z)/Z$ is represented by a constant group scheme associated to a finite constant group $W_{H}$, which we will call the \textit{Weyl group of $Z$ in $H$} (cf. Section \ref{sec.weyl}). \par
As $S$ is regular noetherian, any maximal torus splits after passage to a Galois cover and we deduce the answer to our leading question (\ref{main-question}) from the split case. \par
From now on let $G$ be a reductive $S$-group scheme that admits a maximal torus $T$. Our main question is about the behavior of Tate motives under the induced maps on $\DM$ corresponding to the zigzag (\ref{intro.q}). We will work in a more general setting and look at $a$ and $b$ separately. \par 

\subsubsection*{Equivariant motives and passage to Tori}

The morphism $a$ resembles the motivic version of a more classical problem on classical problem on Chow groups. Let $X$ be an $S$-scheme locally of finite type with $G$-action, what is the relation between $A^{\bullet}_{G}(X)$ and $A^{\bullet}_{T}(X)$? In \cite{EG} Edidin and Graham answer this question for rational Chow groups in the case where $S=\Spec(k)$ is the spectrum of a field and $T$ is split, i.e. $A^{\bullet}_{G}(X)_{\QQ}\cong A^{\bullet}_{T}(X)^{W}_{\QQ}$, where $W$ denotes the Weyl group of $T$ in $G$.\par 
This isomorphism is just a shadow of an equivalence that can be seen motivically.

\begin{intro-theorem}[\protect{\ref{thm.motive.BG}}]
\label{intro.thm.1}
Let $G$ be a reductive $S$-group scheme with split maximal torus $T$ and Weyl group $W$. Assume $G$ acts on an $S$-scheme $X$ locally of finite type. Then we have
	$$
		R\Gamma_{S}(\quot{X/G},\QQ)\simeq R\Gamma_{S}(\quot{X/T},\QQ)^{W}.
	$$
	In particular, if $R\Gamma_{S}(\quot{X/T},\QQ)$ is Tate, then so is $R\Gamma_{S}(\quot{X/G},\QQ)$.
\end{intro-theorem}

The idea of the proof of Theorem \ref{intro.thm.1} is to factorize $\quot{X/T}\rightarrow \quot{X/G}$ into $$\quot{X/T}\rightarrow \quot{X/N_{G}(T)}\rightarrow \quot{X/G}.$$ Then the first map of the factorization is naturally a $W$-torsor and the second map a $G/N_{G}(T)$-bundle. For torsors under finite groups \'etale descent relates motives via $W$-invariants. For $G/N_{G}(T)$-bundles it suffices to see that $R\Gamma_{S}(G/N_{G}(T),\QQ)$ is trivial. We will prove this by reducing the triviality to the equivalence of the map $K_{0}(S)_{\QQ}\rightarrow K_{0}(G/B)^{W}_{\QQ}$, which after comparison with rational Chow theory is classical.  \par 
Applying Theorem \ref{intro.thm.1} to motivic cohomology in the case, where $S=\Spec(k)$, we can extend the classical result to \textit{higher} Chow groups even in the \textit{non-split case}, generalizing the analogous result from \cite{KriC} from split reductive groups to non-split groups. The idea is that any maximal torus splits, after passage to a finite Galois extension. Then we reduce to the split case, using that motives of torsors under finite groups are related by taking invariants.

\begin{intro-corollary}[\protect{\ref{cor.chow.bg}}]
	Let $k$ be a field an let $G$ be a reductive $k$-group scheme with maximal torus $T$ and absolute Weyl group $W$. Assume $G$ acts on a smooth finite type $k$-scheme $X$. Then for all $n,m\in \ZZ$, we have 
	$$
		A^{n}_{G}(X,m)_{\QQ}\cong A^{n}_{T}(X,m)_{\QQ}^{W}.
	$$
\end{intro-corollary}

\subsubsection*{Motives of $T$-torsors}
We will now analyze morphism $b$. For this let $\varphi\colon G\rightarrow G$ be an isogeny that fixes $T$. Consider the embedding $T\hookrightarrow T\times T$ given by the graph of $\varphi$. The quotient $T\times T/T$ under this embedding is isomorphic to $T$. In particular, this isomorphism gives the map 
$$
	b\colon \quot{G/_{\varphi}T}\rightarrow \quot{G/T\times T}
$$
the structure of a $T$-torsor. So, to understand the motivic behavior of $b$ it is enough to understand motives of $T$-torsors.\par 
Thus, let $X\rightarrow Y$ be a morphism of Artin stacks that is a $T$-torsor. Classically, Chow groups in this setting can be computed rather easily. For each character $\chi\in T\rightarrow \GG_{\textup{m}}$ we get a $1$-dimensional representation $\kappa(\chi)$ of $T$. This yields a line bundle $L_{\chi}\coloneqq X\times^{T}\kappa(\chi)$ on $Y$. Multiplication with the first Chern class of $L_{\chi}$ yields an action of the character group $\That$ of $T$ on $A^{\bullet}(Y)$. If $T$ is split, the morphism $b$ yields
\begin{equation}\tag{2}
	\label{intro.eq.1}
	A^{\bullet}_{T}(G)\cong A^{\bullet}_{T\times T}(G)/\That A^{\bullet}_{T\times T}(G)
\end{equation}
	
(cf. \cite{TotaroG}).\par 
Again, this is just a shadow of computations for oriented cohomology theories.\par 
The idea is the following. Assume that $T$ is a split torus. Then we have $T\cong \GG_{\textup{m}}^{r}$ for some $r\in\NN$. By applying successive $\GG_{\textup{m}}$-quotients, we can write 
$$
	X\rightarrow X_{1}\rightarrow X_{2}\rightarrow\dots\rightarrow X_{r}\cong Y,
$$
where $X_{i} \coloneqq \quot{X/\GG_{\textup{m}}^{i}}$. Each of the maps $X_{i-1}\rightarrow X_{i}$ is a $\GG_{\textup{m}}$-torsor. So we may reduce to the case, where $T=\GG_{\textup{m}}$. In this case, we can follow \cite{HosLeh} and assign the line bundle $\Lcal\coloneqq X\times^{\GG_{\textup{m}}} \AA^{1}$ over $Y$. Multiplication with the first Chern class of $\Lcal$ yields a fibre sequence
$$
	M_{k}(X)\rightarrow M_{k}(Y)\rightarrow M_{k}(Y)(1)[2].
$$

Applying this construction in the split case to motivic cohomology yields a generalization of (\ref{intro.eq.1}).

\begin{intro-corollary}[\protect{\ref{cor.T-tors.coho}}]
	Assume that $S=\Spec(k)$. Let $X\rightarrow Y$ be a $T$-torsor of smooth Artin stacks over $S$. Then 
	$$
		A^{\bullet}(X)_{\QQ}\cong A^{\bullet}(Y)_{\QQ}/\That A^{\bullet}(Y)_{\QQ}.
	$$
\end{intro-corollary}

If $X$ and $Y$ are represented by quotients of qcqs smooth schemes by diagonalizable group schemes, for example for $b$ as above, we can actually replace the Chow ring with equivariant $K_{0}$. More generally, the analogous statement holds for any oriented cohomology theory that is $m$-connective (cf. Remark \ref{rem.scalloped}).

\begin{intro-proposition}[\protect{\ref{cor.T-tors.tate}}]
\label{intro.prop.1}
Let $f\colon X\rightarrow Y$ be a $T$-torsor of smooth Artin stacks over $S$. Then $f$ is Tate.
\end{intro-proposition}

If $T$ is not split, we have to slightly modify our notion of Tate motives. We have to add motives by finite field extensions. The resulting motives are called \textit{Artin-Tate} motives. In this way it is clear that if $T$ is not split, then a $T$-torsor is Artin-Tate. We will discuss this in Remark \ref{rem.non.split}.

\subsubsection*{Applications to quotients up to conjugation by isogeny}

We will summarize our results above and apply them to our starting question. We want to remark that we have not particularly used the fact that we are interested in Frobenius-conjugation but rather conjugation up to \textit{isogeny}. Thus, we can work in a more general setting, that we describe in the following.\par
Assume that $S=\Spec(k)$ is the spectrum of a field. Let $P$ resp. $Q$ be parabolics inside $G$ with Levi-components $L$ resp. $M$. Let $\varphi\colon L\rightarrow M$ be an isogeny. Then $L$ acts on $G$ via $(l,g)\mapsto lg\varphi(l)^{-1}$. Let $T$ be a split maximal torus of $G$ contained in $L$. Fix a $g_{0}\in G(S)$ such that $g_{0}\varphi(T)g_{0}^{-1} =T$ and denote by $\widetilde{\varphi}$ the composition of $\varphi$ and $g_{0}$-conjugation. We can embed $T$ into $T\times T$ via $t\mapsto (t,\widetilde{\varphi}(t))$.\par 
We will assume for simplicity that $T$ is split.

\begin{intro-theorem}[\protect{\ref{thm.main}}]
\label{intro.thm.2}
The motives $R\Gamma_{S}(\quot{G/_{\varphi}L},\QQ)$ and $M_{S}(\quot{G/_{\varphi}L})$ are completed Tate motives in $\DM(S)$, i.e. contained in the cocomplete stable subcategory generated by Tate motives. We can compute the Chow ring of $\quot{G/_{\varphi}L}$ as
	$$
		A^{\bullet}(\quot{G/_{\varphi}L})_{\QQ}\cong \left( A^{\bullet}_{T}(G/T)_{\QQ}/\That A^{\bullet}_{T}(G/T)_{\QQ}\right)^{W_{L}},
	$$
	where $W_{L}$ denotes the Weyl group of $T$ inside $L$.
\end{intro-theorem}
If $T$ is not split, we again have to include motives coming from finite field extensions of $k$. Further, we have to replace the Weyl group, by the absolute Weyl group.\par

Using results about equivariant $K$-theory by Uma and Krishna and our results on the cohomology theory of $T$-torsors, we can extend the situation above to equivariant $K$-theory and generalize a result by Brokemper.

\begin{intro-corollary}[\protect{\ref{prop.chev}}]
	In the setting above assume that $G$ is split with respect to $T$ and that the derived group of $G$ is simply connected. Then we have
	$$
		K_{0}(\quot{G/_{\varphi}L})_{\QQ} \cong R(T)_{\QQ}^{W_{L}}/(f-\varphitilde f\mid f\in R(T)_{\QQ}^{W}),
	$$
	where $W_{L}$ denotes the Weyl group of $T$ in $L$ and $W$ as before denotes the Weyl group of $T$ in $G$.
\end{intro-corollary}

\begin{intro-example} 
\label{intro.ex.1}
Let us give two interesting examples, where Theorem \ref{intro.thm.2} can be used.
	\begin{enumerate}
		\item  Let $k=\FF_{q}$ be a finite field with $q$-elements and assume $S=\Spec(k)$. If we set $L=G$ and $\varphi$ the $q$-Frobenius, then we precisely get the situation of the beginning back. In particular, we see that there is an adjunction between $\DTM(\B{G^{F}})$ and $\DTM(\quot{B\bs G/B})$.
		\item Let $k$ be a finite field of characteristic $p>0$ and assume $S=\Spec(k)$.  Another interesting example is the stack of $G$-zips of Pink-Wedhorn-Ziegler (cf. Example \ref{ex.gzip.tate}). In particular, we can recover the computations of Brokemper \cite{Bro1} for Chow groups (up to some computations of \textit{loc.cit.}).
	\end{enumerate}
\end{intro-example}

\subsubsection*{Structure of this article}

We start this article by recalling properties of the $\infty$-category of motives and how to extend this to arbitrary locally of finite type Artin stacks. After defining the necessary notions for this article, we quickly recollect some computational aspects.\par
Afterwards, we start to focus on motives on schemes with group action. First, we explain how to achieve a group action on motives and how torsors under finite groups of Artin stacks have a particular behavior. Then we concentrate on the case $T\subseteq G$, a split maximal torus inside a reductive group. Namely, we show that the relation between $T$-equivariant Chow groups and $G$-equivariant Chow groups extend to the motivic case.\par 
Next, we show that $T$-torsors of Artin stacks are Tate and explicitly compute motivic cohomology implying the classical case of Chow groups.\par 
In the end, we focus on reductive groups with conjugation up to isogeny. We use our results from before to get the desired adjunction of Tate motives with the $T$-equivariant flag variety. We end the paper with ideas for generalization, that we want to address in the future.

\subsubsection*{Setup}
\begin{enumerate}
\item[$\bullet$] Throughout, we fix a noetherian excellent scheme $\Btilde$ of dimension at most $2$ and a regular scheme $S$ of finite type over $\Btilde$.
\item[$\bullet$] An Artin stack is an algebraic stack in the sense of \cite{stacks-project}. When we write ``stack'', we always mean ``Artin stack''. Every Artin stack (and hence scheme) will be locally of finite type over $S$ and any morphism of Artin stacks will be an $S$-morphism.
\item[$\bullet$] Throughout, we will work in the setting of $\infty$-categories and freely use the language of $\infty$-categories. In particular, presheaves will be always considered as presheaves with values in $\infty$-groupoids.
\item[$\bullet$] Throughout $\DM$ denotes the Beilinson motives with coefficients in $\QQ$.
\item[$\bullet$] We will work with reductive group schemes over $S$ (cf. \cite{SGA3}), i.e. $S$-affine smooth group schemes such that the geometric fibers are connected reductive groups. 
\item[$\bullet$] We say that a reductive $S$-group scheme $G$ is \textit{quasi-split} if it so in the sense of \cite[Exp. 24 \S 3.9]{SGA3}.  
\item[$\bullet$] Let $X$ be an $S$-scheme and $G$ an $S$-group scheme acting on $X$. Then we denote by $\quot{X/G}$ the associated quotient stack in the \'etale topology.
\item[$\bullet$] For a $S$-group scheme $G$, we denote its (\'etale) classifying stack by $\B{G}\coloneqq \quot{S/G}$, where $G$ acts trivially on $S$.
\end{enumerate}

\subsection*{Acknowledgement}
 I would like to thank Torsten Wedhorn for multiple discussions and comments on the earlier version, as well as his communication of Appendix \ref{sec.app} and the idea to use reductive group schemes with split maximal tori, generalizing the earlier versions.  Further, I would like to thank Paul Ziegler, who communicated this project and shared his thoughts with me. Also, I would like to thank Thibaud van den Hove for noticing an error in the previous version and Rizacan Ciloguli for lots of very nice discussions. Finally, I would like to thank Arnaud \'Eteve, Tim Holzschuh, Marc Hoyois, Adeel Khan, Timo Richarz, Jakob Scholbach, Fabio Tanania for fruitful discussions and feedback. \par
This project was funded by the Deutsche Forschungsgemeinschaft (DFG, German Research Foundation) - project number 524431573, the DFG TRR 326 \textit{Geometry and Arithmetic of Uniformized Structures}, project number 444845124 and by the LOEWE grant `Uniformized Structures in Algebra and Geometry'.
\section{Rational equivariant motivic homotopy theory}

In this section, we want to recall some properties of the category of (rational) motives and how to extend this to Artin stacks locally of finite type. We expect that most readers are familiar with the notion of motives and the $6$-functor formalism and refer to \cite[Syn. 2.1.1]{RS1} for an overview of the properties of the $6$-functor formalism.\par 
Nevertheless, to prevent confusion, let us quickly recall some common notation and remarks. 
\begin{rem}[\protect{\cite{RS1}}, \protect{\cite{CD1}}]
\label{rem.6.ff}
	In the following any scheme and any morphism will be considered in the category of finite type $S$-schemes, $\Sch_{S}^{\ft}$.
\begin{enumerate}
	\item[(i)]  For any $S$-scheme $X$, $\DM(X)$ is a stable, presentable, closed symmetric monoidal $\infty$-category. The $\otimes$-unit will be denoted by $1_{X}$. It has all limits and colimits.
	\item[(ii)] The assignment $X\mapsto \DM(X)$ can be upgraded to a presheaf of symmetric monoidal $\infty$-categories
	$$
		\DM^{*}\colon \Sch_{S}^{\ft}\rightarrow \ICat^{\otimes},\ X\mapsto \DM(X),\ f\mapsto f^{*}.
	$$
	  For any morphism of schemes $f\colon X\rightarrow Y$, there is an adjunction
	$$
		\begin{tikzcd}
			 f^{*}\colon \DM(Y)\arrow[r,"",shift left = 0.3em]&\arrow[l,"",shift left = 0.3em]\DM(X)\colon f_{*}.
		\end{tikzcd}
	$$
	\item[(iii)] If $f$ is smooth, then $f^*$ has a left adjoint, denoted $f_\sharp$.
	\item[(iv)] The assignment $X \mapsto \DM(X)$ can be upgraded to a presheaf of $\infty$-categories
$$\DM^{!}: (\Sch_{S}^{\ft})^{\op} \rightarrow \ICat,\ X \mapsto \DM(X), f \mapsto f^!.$$
For each $f$, there is an adjunction
$$f_! : \DM(X) \rightleftarrows \DM(Y): f^!.$$
For any factorization $f = p \circ j$ with $j$ an open immersion and $p$ a proper map, there is a natural equivalence $f_! \cong p_* j_\sharp$.
	\item[(v)] Both functors $\DM^{*}$ and $\DM^{!}$ are sheaves for the $h$-topology (cf. \cite[Thm. 2.1.13]{RS1})
	\item[(vi)] For the projection $p: \GG_{\textup{m},S}\times_{S} X \rightarrow X$, and any $M \in \DM(X)$, the map induced by the counit $p_\sharp p^* M[-1] \rightarrow M[-1]$ in $\DM(X)$ is a split monomorphism.
The complementary summand is denoted by $M(1)$.
The functor $M \mapsto M(1)$ is an equivalence with inverse denoted by $M \mapsto M(-1)$. For any integer $n$ the $n$-fold composition is denoted by $M\mapsto M(n)$ and in the future, we will abbreviate $\langle n\rangle \coloneqq (n)[2n]$.
\end{enumerate}
\end{rem}
\par
Let $\Xline$ be a prestack\footnote{This notion is from \cite{RS1}.}, i.e. presheaf of anima on the category of rings. There are several approaches to the $\infty$-category $\DM(\Xline)$. If $\Xline$ is an Artin stack over a field $k$ Hoskins-Lehalleur use a construction similar to equivariant Chow groups. One resolves $\Xline$ by open substacks $(\Xline_{i})$ such that on each $\Xline_{i}$ there is a vector bundle $V_{i}$ together with an open $U_{i}$ with a free $G$-action such that the codimension of $V_{i}\setminus U_{i}$ tends towards infinity (cf. \cite{HosLeh}). This idea was further generalized to arbitrary cohomology theories by Khan-Ravi by a construction they call \textit{lisse extension} (cf. \cite{KR1}). This construction was already used for the motive of classifying stacks $\B{G}$ by Morel-Voevodsky (cf. \cite[\S 4]{MV1}). Totaro then gave an explicit computation of the motive of $\B{\GG_{\textup{m}}}$ over a field (cf. \cite{Tot} and Example \ref{ex.BGL}).\par Alternatively to the construction of Hoskins-Lehalleur resp. Khan-Ravi, Richarz-Scholbach give a construction via certain left and right Kan extension (cf. \cite{RS1}). Their approach is based on gluing the motivic structure on Beilinsion motives to arbitrary prestacks. Indeed,  $\DM(-)$ satisfies $h$-descent, so it is rather formal to extend the six functor formalism to Artin stacks locally of finite type, we will use this approach. One should note that this was also discussed in \cite{Khan1}, to extend the $6$-functor formalism to higher Artin stacks. \par 
For computations of the underlying motives it seems to be better to work with the definition of Hoskins-Lehalleur resp. Morel-Voevodsky. Let $f\colon \Xfr\rightarrow \Spec(k)$ be a smooth Artin stack and let $M(\Xfr)$ denote the $k$-linear motive of $\Xfr$ defined in \cite{HosLeh}. This defines an object in $\DM^{*}(\Spec(k))$. Let $1_{k}$ denote the unit in $\DM^{*}(\Spec(k))$. We will see in Corollary \ref{cor.rel.tot} that if $f$ is smooth, we have $M(\Xfr)\simeq f_{\sharp}f^{*}1_{k}$. In particular, if we use the approach of \cite{RS1} and define a motive of a prestack as the $\sharp$-push/$*$-pull of the unit, we see that the notion of motives on Artin stacks as in \textit{loc.cit.} agrees with the classical ones.\par 
To define $\DM$ for prestacks, we will follow \cite{RS1}. For the following definition let us fix a regular cardinal $\kappa$. Let $\textup{Aff}_{S}^{\ft}$ denote the (Nerve of the) category of affine schemes of finite type over $S$. We let $\textup{Aff}_{S}^{\kappa}$ denote the $\kappa$-pro-completion of $\textup{Aff}_{S}^{\ft}$. Further, $\textup{DGCat}_{\textup{cont}}$ denotes the $\infty$-category of presentable stable $\QQ$-linear dg-$\infty$-categories with colimit preserving functors.

\begin{defi}[\protect{\cite{RS1}}]
Let $y\colon \textup{Aff}_{S}^{\kappa}\hookrightarrow P(\textup{Aff}_{S}^{\kappa})$ be the Yoneda embedding. We define the functor
$$
	\DM_{S}\colon P(\textup{Aff}_{S})^{\op}\rightarrow \textup{DGCat}_{\textup{cont}}
$$
via left Kan extension along the inclusion $\textup{Aff}_{S}^{\ft}\subseteq \textup{Aff}_{S}^{\kappa}$ and right Kan extension along $\textup{Aff}_{S}^{\kappa} P(\textup{Aff}_{S}^{\kappa})$, where all the functors are given via $!$-pullback.
For a prestack $\Xline\in P(\textup{Aff}_{S}^{\kappa})$, we define the  $\infty$-category of \textit{$S$-linear motives (with rational coefficients) of $\Xline$} as $\DM_{S}(\Xline)$.
\end{defi}
As noted in \cite{RS1}, for applications we can take $\kappa$ to be large enough, so that all affine schemes of interest are inside $\textup{Aff}_{S}^{\kappa}$. Thus, for once and for all we may fix a regular cardinal $\kappa$ and drop it from the notation.\par

Khan showed in \cite[Thm. A.5]{Khan1} that this method of extending the theory of motives to (derived) Artin stacks, does not loose the $6$-functor formalism. One way to see this, is that we can use the DESCENT program in \cite{DESCENT}, since the Beilinsion motives satisfy \'etale descent, in our context. As mentioned in \cite{Khan1}, this is equivalent to the construction of \cite{RS1}.

\begin{thm}
	Let $\widetilde{\DM}$ be the restriction of $\DM$ to Artin stacks locally of finite type over $S$. Then $\widetilde{\DM}$ is compatible with the $6$-functor formalism in the sense of \cite[Syn. 2.1.1]{RS1}.
\end{thm}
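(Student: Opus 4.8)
The plan is to deduce the theorem from the facts, recorded in Remark~\ref{rem.6.ff}, that $\DM$ on $\Sch_S^{\ft}$ already carries a full six-functor formalism and is an $h$-sheaf (hence satisfies \'etale and smooth descent), and then to invoke the by-now standard machinery propagating such a formalism from schemes to Artin stacks.

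First I would verify that $\DM|_{\Sch_S^{\ft}}$ satisfies the input hypotheses of the extension theorem of \cite{DESCENT} (equivalently of \cite[Thm. A.5]{Khan1}): it is an \'etale (even $h$-) sheaf of presentable symmetric monoidal stable $\infty$-categories; it admits $f_\sharp$ for $f$ smooth; and it admits $f_!$ for $f$ separated of finite type, with $f_!\simeq p_*j_\sharp$ for every proper--open factorization, subject to base change and the projection formula. These are exactly the properties of the six operations on schemes recalled in Remark~\ref{rem.6.ff} together with \cite{CD1}.

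Next, since every Artin stack $X$ locally of finite type over $S$ admits a smooth surjection $U\to X$ from a finite type $S$-scheme whose \v{C}ech nerve $U_\bullet$ is a simplicial object of $\Sch_S^{\ft}$, descent lets one set $\widetilde{\DM}(X)\coloneqq \lim_{[n]\in\Delta}\DM(U_n)$, independently of the chosen atlas; this matches the Kan-extension construction defining $\DM_S$ above. The operations then extend: $f^*$, $\otimes$ and $\Homline$ by descent; $f_*$ as the right adjoint of $f^*$; $f_\sharp$ for $f$ smooth, since smoothness is smooth-local on source and target; and $f_!$ by gluing the scheme-level $!$-pushforwards along atlases, the cocycle data being supplied by smooth base change on schemes. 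The compatibilities listed in \cite[Syn. 2.1.1]{RS1} --- base change, projection formula, smooth purity, the localization triangles --- are all formulated so as to be checked smooth-locally, hence reduce to the scheme case.

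The main point requiring care, and the reason one cites \cite{DESCENT}, \cite{Khan1} and \cite{RS1} rather than assembling everything by hand, is coherence: the six operations together with all higher compatibilities must be organized into a single functor out of an $\infty$-category of correspondences, not merely specified object by object. This is precisely what the DESCENT formalism provides, its only non-formal inputs being \'etale hyperdescent and the base-change theorems for $\DM$ on schemes, both available in our setting. As noted in \cite{Khan1}, the resulting extension agrees with the construction of \cite{RS1} used above to define $\DM_S$, so $\widetilde{\DM}$ carries the asserted six-functor formalism.
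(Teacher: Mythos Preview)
Your proposal is correct and follows essentially the same approach as the paper: the paper's proof simply cites \cite[Thm.~A.5]{Khan1}, noting that the method there applies to any motivic category satisfying \'etale descent, and that it agrees with the construction of \cite{RS1}. You have spelled out in more detail what those cited results accomplish and why their hypotheses are met, but the substance is the same.
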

\begin{proof}
	The proof is the same as \cite[Thm A.5]{Khan1}\footnote{As mentioned in \textit{op.cit.}, the method of extending the $6$-functor formalism works with any motivic category that satisfies \'etale descent.} and follows from the results of \cite{RS1}. 
\end{proof}

\begin{defi}[\protect{\cite{RS1}}]
\label{def.motive}
	Let $\Xfr$ be an Artin $S$-stack locally of finite type with structure morphism $f\colon \Xfr\rightarrow S$. Then we define the \textit{(rational) $S$-linear motive of $\Xfr$} as $M_{S}(\Xfr)\coloneqq f_{!}f^{!}1_{S}$. If $S=\Spec(A)$ is affine, we write $M_{A}(\Xfr)$.\par 
	We further define the \textit{global sections of $\Xfr$ over $S$} to be $R\Gamma_{S}(\Xfr,\QQ)\coloneqq f_{*}f^{*}1_{S}$.
\end{defi}

\begin{rem}
	Let $f\colon \Xfr\rightarrow S$ be an Artin stack locally of finite type over $S$. If $f$ is smooth, then relative purity implies that $f_{\sharp}f^{*}\simeq f_{!}f^{!}$ and in particular, we see with
	$$
		\Hom_{\DM(S)}(M_{S}(\Xfr),1(n)[m]) \simeq \Hom_{\DM(S)}(1(-n)[-m],R\Gamma_{S}(\Xfr,\QQ))
	$$
	that $M_{S}(\Xfr)$ computes motivic homology and $R\Gamma_{S}(\Xfr,\QQ)$ motivic cohomology.
\end{rem}

\begin{notation}
	Let $G$ be an $S$-group scheme locally of finite type acting on an $S$-scheme $X$ locally of finite type, via a morphism $a$. For the quotient stack $\quot{X/G}$, we can define a simplicial object in $S$-schemes locally of finite type, via its \textit{Bar-resolution}
$$
	\begin{tikzcd}
		\dots\arrow[r,"", shift left = 0.6 em]\arrow[r,"",shift right= 0.6 em]\arrow[r,""]&G\times_{S} X\arrow[r,"p", shift left = 0.3 em]\arrow[r,"a",shift right= 0.3 em,swap]& X
	\end{tikzcd}
$$
We denote the corresponding simplicial functor with $\Bres^{\bullet}(X,G)$.
\end{notation}

There is also an alternative way to define motives of algebraic stacks via the Bar-resolution. For each $n\geq 0$ let $\QQ(\Bres^{n}(X,G))$ be the constant \'etale sheaf with coefficients in $\QQ$ associated to $\Bres^{n}(X,G)$. This yields a simplicial object in \'etale sheaves of $S$-schemes locally of finite type with rational coefficients. The complex associated to this simplicial object induces a motive $M_{S}(\Bres^{\bullet}(X,G))$ in $\DM(S)$. For $S=\Spec(k)$ the spectrum of a field, Hoskins-Lehalleur explain in \cite{HosLeh} that this definition is equivalent to their definition of a motive of an Artin stack. The naturally arising question is if $M_{S}(\Bres^{\bullet}(X,G))$ is equivalent to $M_{S}(\quot{X/G})$ as defined in Definition \ref{def.motive}. If $\quot{X/G}$ is representable by a smooth scheme, then the answer is positive and follows by cohomological descent for $\DM$ with respect to the $h$-topology (cf. \cite[Thm. 14.3.4, Prop. 5.2.10]{CD1}). Thus, the answer stays positive for smooth Artin stacks, as $\DM$ satisfies $h$-descent by gluing (cf. \cite[Thm. 2.2.16]{RS1}).

\begin{cor}
\label{cor.rel.tot}
	Let $k$ be a field and $S=\Spec(k)$. Let $X$ be a smooth $S$-scheme and $G$ be a smooth $S$-group scheme acting on $X$ with structure map $f\colon \quot{X/G}\rightarrow S$. Then $M_{S}(\quot{X/G})$ is equivalent to $M_{S}(\Bres^{\bullet}(X,G))$.
\end{cor}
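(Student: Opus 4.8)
The plan is to identify both sides with the geometric realization of one and the same simplicial motive, using that $\Bres^{\bullet}(X,G)$ is the \v{C}ech nerve of the canonical atlas of $\quot{X/G}$ and that $M_{S}(-)$ sends $h$-hypercovers of smooth stacks to colimits.

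First I would record that $f\colon \quot{X/G}\to S$ is smooth: the projection $p\colon X\to \quot{X/G}$ is a (smooth, since $G$ is smooth) $G$-torsor, in particular smooth and surjective, and $X\to S$ is smooth, so smoothness descends along $p$. By the Remark preceding the statement, relative purity then gives $M_{S}(\quot{X/G})=f_{!}f^{!}1_{S}\simeq f_{\sharp}f^{*}1_{S}=f_{\sharp}1_{\quot{X/G}}$. Likewise each $g_{n}\colon \Bres^{n}(X,G)=G^{\times_{S}n}\times_{S}X\to S$ is a smooth $S$-scheme, so $M_{S}(\Bres^{n}(X,G))\simeq (g_{n})_{\sharp}g_{n}^{*}1_{S}$, and by construction $M_{S}(\Bres^{\bullet}(X,G))$ is the colimit $\operatorname*{colim}_{[n]\in\Delta^{\op}}M_{S}(\Bres^{n}(X,G))$ of the associated simplicial object of $\DM(S)$.

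Next I would invoke descent. The simplicial scheme $\Bres^{\bullet}(X,G)$ together with its augmentation is precisely the \v{C}ech nerve of the atlas $p$, and all of its face maps are base changes of $p$, hence smooth. When $\quot{X/G}$ is representable by a smooth scheme this is the classical case: cohomological $h$-descent for $\DM$ (cf. \cite[Thm. 14.3.4, Prop. 5.2.10]{CD1}) already gives $M_{S}(\quot{X/G})\simeq \operatorname*{colim}_{[n]\in\Delta^{\op}}M_{S}(\Bres^{n}(X,G))$. Since $\DM$ satisfies $h$-descent on Artin stacks locally of finite type by gluing (cf. \cite[Thm. 2.2.16]{RS1}), the same argument applies to an arbitrary smooth Artin stack $\quot{X/G}$. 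Concretely, $h$-descent identifies $\DM(\quot{X/G})$ with $\lim_{[n]\in\Delta}\DM(\Bres^{n}(X,G))$ along $*$-pullbacks; because each $p_{n}\colon \Bres^{n}(X,G)\to \quot{X/G}$ is smooth, the left adjoints $(p_{n})_{\sharp}$ and smooth base change are available, and the usual bar-resolution argument upgrades this to $1_{\quot{X/G}}\simeq \operatorname*{colim}_{[n]\in\Delta^{\op}}(p_{n})_{\sharp}p_{n}^{*}1_{\quot{X/G}}$. Applying the colimit-preserving functor $f_{\sharp}$ and using $f\circ p_{n}=g_{n}$ and $p_{n}^{*}1_{\quot{X/G}}=1_{\Bres^{n}(X,G)}$ then yields
$$
 M_{S}(\quot{X/G})\simeq f_{\sharp}1_{\quot{X/G}}\simeq \operatorname*{colim}_{[n]\in\Delta^{\op}}(g_{n})_{\sharp}g_{n}^{*}1_{S}\simeq \operatorname*{colim}_{[n]\in\Delta^{\op}}M_{S}(\Bres^{n}(X,G))=M_{S}(\Bres^{\bullet}(X,G)).
$$

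The main obstacle is the passage from the \emph{contravariant} descent equivalence $\DM(\quot{X/G})\simeq\lim_{\Delta}\DM(\Bres^{n}(X,G))$ — which is literally the $h$-sheaf property of $\DM$ — to the \emph{covariant} colimit presentation of the unit, equivalently to the statement that the smooth-pushforward invariant $M_{S}(-)$ converts $h$-hypercovers into colimits. This is exactly where one uses that the atlas is smooth rather than merely an $h$-cover, so that $\sharp$-adjoints exist and satisfy base change; for schemes this is contained in the cohomological descent package of \cite{CD1}, and the reduction of the smooth-stack case to the smooth-scheme case is precisely the gluing construction of \cite{RS1}.
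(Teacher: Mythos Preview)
Your proposal is correct and follows essentially the same approach as the paper. The paper's proof is a one-liner citing \cite[Prop.~A.7]{HosLeh} together with the preceding discussion, which already sketches exactly your argument: cohomological $h$-descent from \cite{CD1} handles the scheme case, and the extension to smooth Artin stacks comes from the $h$-descent/gluing of \cite{RS1}. You have simply unpacked that discussion, making explicit the passage from the contravariant descent equivalence to the covariant colimit presentation of the unit via the $\sharp$-adjoints and smooth base change---a step the paper leaves inside its references.
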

\begin{proof}
	This follows from \cite[Prop. A.7]{HosLeh} and the discussion above.
\end{proof}

We can use Corollary \ref{cor.rel.tot} to compute the motive of $B\GG_{\textup{m}}$ as in \cite{Tot}.

\begin{example}
\label{ex.BGL}
	Let $k$ be a field. Further, let $\GG_{\textup{m},k}$ act trivially on $\Spec(k)$. Then $$M_{k}(\B{\GG_{\textup{m},k}})\simeq \colim_{i\in\NN}M_{k}(\PP^{i}_{k})\simeq \bigoplus_{i\geq 0}1_{k}\langle i\rangle.$$
\end{example}

\begin{rem}
	In the following we want to understand the Gysin sequence for algebraic stacks. Let us quickly recall it in the scheme case.\par  Let $i\colon Z\hookrightarrow X$ be a closed immersion of $S$-schemes of pure codimension $n$ with open complement $U$. Let us assume that $Z$ and $X$ are smooth over $S$. In particular, we see that $i$ is equivalently a regular closed immersion of codimension $n$. Then there exists a fibre sequence of the form
	$$
		M_{S}(U)\rightarrow M_{S}(X)\rightarrow M_{S}(Z)\langle n\rangle
	$$
	(cf. \cite[11.3.4]{CD1}).
	\par 
	We are going to replace $S$ by a smooth Artin stack $\Yfr$ over $S$ and $X$ by a smooth Artin stack $\Xfr$ over $\Yfr$. For this let us also recall the notion of a (regular) closed immersion of a certain codimension for Artin stacks.\par
	Let $\iota\colon \Zfr\hookrightarrow \Xfr$ be a closed immersion of locally noetherian Artin stacks. Let $X\rightarrow \Xfr$ be a smooth atlas. Then $\iota$ is representable and we define the codimension of $\Zfr$ as the codimension of $\Zfr\times_{\Xfr} X$ in $X$ (cf. \cite[\S 6]{Osser}). We can also define the notion of a regular immersion in that way (cf. \cite[06FM]{stacks-project}) and the notion of its codimension. In particular, a closed immersion of smooth Artin $S$-stacks $\Zfr\hookrightarrow \Xfr$ is automatically regularly immersed and the codimension of the regular immersion agrees with the codimension as a closed substack.
\end{rem}

\begin{lem}[The Gysin sequence]
\label{Gysin for stacks}
	Let $f\colon \Xfr\rightarrow \Yfr$ be a smooth schematic morphism of smooth Artin $S$-stacks. Further let $i\colon \Zfr\hookrightarrow \Xfr$ be a closed immersion of (pure) codimension $n$ such that $\Zfr$ is smooth over $\Yfr$ with open complement $j\colon \Ufr\rightarrow \Xfr$. Further, let us denote $f_{0}\coloneqq f\circ j$ and $\fbar\coloneqq f\circ i$. Then there exists the following fibre sequence
	$$
		f_{0!}f_{0}^{!}1_{\Yfr}\rightarrow f_{!}f^{!}1_{\Yfr}\rightarrow \fbar_{!}\fbar^{!}1_{\Yfr}\langle n\rangle .
	$$
\end{lem}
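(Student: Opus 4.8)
The plan is to reduce the statement on Artin stacks to the known Gysin sequence for schemes (recalled just before the lemma, following \cite[11.3.4]{CD1}) via smooth descent. First I would choose a smooth atlas $u\colon Y\to\Yfr$ with $Y$ an $S$-scheme, and form the base changes $X\coloneqq \Xfr\times_{\Yfr}Y$, $Z\coloneqq \Zfr\times_{\Yfr}Y$, $U\coloneqq \Ufr\times_{\Yfr}Y$. Since $f$ is schematic, $X$, $Z$, $U$ are algebraic spaces, and after further refining the atlas (or passing to an étale cover) we may take them to be schemes; they are smooth over $S$ because $\Xfr,\Zfr$ are smooth over $\Yfr$ and $Y$ is smooth over $S$. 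The immersion $Z\hookrightarrow X$ is again a closed immersion of pure codimension $n$ of smooth $S$-schemes, hence regular, so the scheme-level Gysin fibre sequence applies to $(X,Z,U)$ over the base $Y$. Concretely, writing $g\colon X\to Y$, $g_0\colon U\to Y$, $\gbar\colon Z\to Y$ for the structure maps, we get a fibre sequence
$$
	g_{0!}g_{0}^{!}1_{Y}\to g_{!}g^{!}1_{Y}\to \gbar_{!}\gbar^{!}1_{Y}\langle n\rangle
$$
in $\DM(Y)$, which is moreover natural in $Y$ along the Čech nerve $Y^{\bullet}$ of the atlas $u$.

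Next I would promote this to a statement over $\Yfr$. The key point is base-change compatibility: since $\DM$ extended to Artin stacks retains the full six-functor formalism (Theorem above, via \cite{Khan1}), and $u$ is smooth, the functors $u^{*}\simeq u^{!}$ (up to a twist/shift that is trivial here since $u$ need not be of constant relative dimension, so more precisely one uses the smooth base change $u^{*}f_{!}\simeq (f_Y)_{!}u_X^{*}$ and $u^*f^!\simeq f_Y^!u^*$, where $u_X\colon X\to\Xfr$ is the pullback of $u$) send the conjectured fibre sequence over $\Yfr$ to the established one over $Y$. Because $\DM$ satisfies smooth (indeed $h$-) descent, a morphism of objects in $\DM(\Yfr)$ is an equivalence as soon as it is so after $u^{*}$; and more generally a cofibre sequence can be glued from the descent datum over $Y^{\bullet}$. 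So I would either: (a) define the putative third term as the cofibre of $f_{0!}f_{0}^{!}1_{\Yfr}\to f_{!}f^{!}1_{\Yfr}$ and check, after pulling back along $u$, that it is identified with $\fbar_{!}\fbar^{!}1_{\Yfr}\langle n\rangle$ using the scheme case together with $u^{*}\fbar_{!}\fbar^{!}\simeq \gbar_{!}\gbar^{!}u^{*}$; or (b) construct the three terms and the two maps directly over $\Yfr$ as a limit over $Y^{\bullet}$ of the scheme-level sequences, using that taking fibre sequences commutes with the totalization computing descent.

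The main obstacle I anticipate is bookkeeping the base-change isomorphisms and the twist $\langle n\rangle$ correctly in the stacky setting: one must make sure that the purity/Gysin triangle is natural enough to descend along the simplicial atlas (i.e. that the connecting maps assemble into a map of simplicial diagrams), and that "codimension $n$" is the right normalization — this is exactly why the excerpt carefully recalls, via \cite[\S 6]{Osser} and \cite[06FM]{stacks-project}, that a closed immersion of smooth Artin stacks is automatically a regular immersion whose codimension as a substack agrees with the codimension of the regular immersion. Granting that, the regularity is preserved under the smooth pullback to the atlas, so the scheme-level input is legitimate and the descent argument goes through. A minor point to handle is that the atlas $X\to\Xfr$ may not have constant relative dimension over $\Yfr$, so one should phrase purity relatively (for $f$ and its pullback $g$) rather than absolutely; since $f$ is smooth and schematic this is exactly the relative purity already invoked in the remark preceding Definition \ref{def.motive}, and it causes no trouble.
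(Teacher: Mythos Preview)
Your proposal is correct and matches the paper's argument essentially verbatim: the paper also picks a smooth atlas $Y\to\Yfr$, forms the \v{C}ech nerve, uses smooth base change to identify $j_{Y,\bullet}^{*}f_{!}f^{!}1_{\Yfr}\simeq f_{\bullet!}f_{\bullet}^{!}1_{\Cv(Y)_{\bullet}}$ (and likewise for $f_{0}$ and $\fbar$), and then invokes descent to write each term as a colimit over $\Delta$, thereby reducing to the scheme case \cite[11.3.4]{CD1}. Your option~(b) is exactly the paper's route; option~(a) is an equally valid shortcut via conservativity of $u^{*}$.
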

\begin{proof}
	Let $Y\rightarrow \Yfr$ be a smooth atlas. Let us define $X\coloneqq Y\times_{\Yfr} \Xfr$ and let $\Cv(Y)_{\bullet}$ resp. $\Cv(X)_{\bullet}$ denote the corresponding \v{C}ech nerves. By construction $\Cv(X)_{\bullet}$ is obtained by $\Cv(Y)_{\bullet}\times_{Y} X$. So, by functoriality we get maps 
	$$
	 \begin{tikzcd}
		f_{\bullet !}\colon \DM(\Cv(Y)_{\bullet})\arrow[r,"",shift left = 0.3em]&\arrow[l,"",shift left = 0.3em]\DM(\Cv(X)_{\bullet})\colon f_{\bullet}^{!}
	\end{tikzcd}
	$$
	 that induce the maps $f_{!}$ and $f^{!}$ after passing to the limit. By construction we have a pullback diagram
	 $$
		\begin{tikzcd}
			\Cv(X)_{\bullet}\arrow[r,"f_{\bullet}"]\arrow[d,"j_{X,\bullet}"]& \Cv(Y)_{\bullet}\arrow[d,"j_{Y,\bullet}"]\\
			\Xfr\arrow[r,""]&\Yfr.
		\end{tikzcd}
	 $$
	 In particular, by smoothness of the atlas and the exchange equivalence, we have 
	 $$
		 j^{*}_{Y,\bullet}f_{!}f^{!}1_{Y}\simeq f_{\bullet !}f_{\bullet}^{!}1_{\Cv(Y)_{\bullet}}.
	 $$ 
	 Thus, by smoothness we can use that $\DM^{*}(\Yfr)\simeq \DM^{!}(\Yfr)$ and descent to see that 
	 $$
	 	\colim_{\Delta} f_{\bullet !}f_{\bullet}^{!}1_{\Cv(Y)_{\bullet}}\simeq f_{!}f^{!}1_{Y}.
	 $$
	 Analogously, we can write  
	 $$
	 	\colim_{\Delta} f_{0\bullet !}f_{0\bullet}^{!}1_{\Cv(Y)_{\bullet}}\simeq f_{0!}f^{!}_{0}1_{Y},\quad \colim_{\Delta} \fbar_{\bullet !}\fbar_{\bullet}^{!}1_{\Cv(Y)_{\bullet}}\simeq \fbar_{!}\fbar^{!}1_{Y}.
	 $$
	 Therefore, we may assume that $\Yfr$ is representable by a scheme and by representability of $f$ also $\Xfr,\Ufr$ and $\Zfr$ are representable by schemes. Hence, the result now follows from the classical Gysin sequence (cf. \cite[11.3.4]{CD1}).
\end{proof}

Lastly, let us define Tate motives. As we mentioned in the introduction, the existence of a motivic $t$-structure is still an open problem. For a field $k$ Levine proved that under certain vanishing assumptions on motivic cohomology in $\DM(k)$, the so called Beilinson-Soul\'e vanishing conjecture, such a $t$-structure exists on the full stable subcategory generated by Tate twists $1_{k}(n)$ (cf. \cite{Levine}). The Beilinson-Soul\'e vanishing conjecture holds for example for finite fields. 

\begin{defi}
	Let $X$ be an Artin $S$-stack locally of finite type. We define the category of Tate motives $\DTM(X)$ to be the full stable subcategory of $\DM(X)$ generated by $1_{X}(n)$, for $n\in\ZZ$. An element $M\in\DM(X)$ is \textit{Tate}, if $M\in\DTM(X)$. \par 
	A map $f\colon X\rightarrow Y$ of Artin stacks locally of finite type over $S$ is called \textit{Tate} if $f_{*}1_{X}$ is Tate.
\end{defi}

Levine further shows, that existence of a weight structure on Tate motives for a field imply that the heart of $\DTM(\FF_{p})$ under the motivic $t$-structure is equivalent to the category of graded finite dimensional $\QQ$-vector spaces $(\QQ\textup{-VS}^{\ZZ})$. In particular, using a result of Wildeshaus, we see for example that $\DTM(\FF_{p})\simeq \Dcal^{b}(\QQ\textup{-VS}^{\ZZ})$, where $\Dcal^{b}$ denotes the bounded derived category (cf. \cite{Wild}).\par 
Let us give a particularly interesting example of a Tate map that will be used later on.

\begin{example}
\label{ex.flag}
	Let $G$ be a split reductive $S$-group scheme and $B\subseteq G$ a Borel. Then we claim that the structure map of the flag variety $G/B\rightarrow S$ is Tate. \par
	Indeed, the Bruhat decomposition of $G/B$ yields a stratification by affine spaces indexed by the Weyl group. The length of each Weyl element yields a partial order on the associated Schubert varieties. With this order, one can show using standard arguments that $R\Gamma_{S}(G/B,\QQ)\simeq \bigoplus_{w\in W} 1_{S}\langle l(w)-n\rangle$, where $n$ denotes the relative dimension of $G/B$ over $S$ (cf. \cite{YayT} or \cite{Bach} for more details on analogous problems). 
\end{example}

\begin{rem}[\protect{\cite{RS1}}]
\label{rem.tate}
	If $f\colon X\rightarrow Y$ is a smooth morphism of Artin $S$-stacks locally of finite type and $X$ is smooth over $S$, then $D_{Y}(f_{*}1_{X}) \simeq f_{!}D_{X}(1_{X}) \simeq f_{!}1_{X}\langle \Omega_{X/S}\rangle$ and $D_{Y}(f_{!}1_{X})\simeq f_{*}1_{X}\langle \Omega_{X/S}\rangle$. Thus, $f_{*}1_{X}$ is Tate if and only if $f_{!}1_{X}$ is Tate.
\end{rem}

In the literature one also considers the stable \textit{cocomplete} $\infty$-category generated by Tate twists (cf. \cite{RS1}). This is usually also referred to as ``Tate motives'' but we will differentiate these from our definition. An example of such motives is given by the motive of $\B{\GG_{\textup{m},k}}\rightarrow \Spec(k)$, the classifying stack of $\GG_{\textup{m},k}$ over a field $k$. Indeed, in Example \ref{ex.BGL} we have seen $M_{k}(\B{\GG_{\textup{m}}})\simeq \bigoplus_{n\in\ZZ} 1_{k}\langle n\rangle$ and thus this lies in the ind-completion of the category of Tate motives. 

\begin{defi}
	Let $X$ be an Artin $S$-stack locally of finite type. We will call an $M\in\DM(X)$ \textit{completed Tate} if it is already in the full stable cocomplete subcategory generated by $1_{X}(n)$.
\end{defi}

\section{Equivariant motives under reductive groups}
\label{sec.beg.eq}
Let us give an outlook of the next subsections in the split case. The non-split case will then be a corollary.\par Assume that $S$ is connected and let $G$ be a reductive $S$-group scheme and $T$ a split maximal torus in $G$ with Weyl group scheme $W_{S}$. We will see in Section \ref{sec.weyl} that $W_{S}$ is a constant group scheme associated to the finite group $W\coloneqq W_{S}(S)$, which we call \textit{Weyl group of $T$ in $G$}.\par 
 Now let $X$ be an $S$-scheme locally of finite type with $G$ action. In this section, we want to show that  $R\Gamma(\quot{X/T},\QQ)^{W}\simeq R\Gamma(\quot{X/G},\QQ)$. In particular, if $R\Gamma(\quot{X/T},\QQ)$ is Tate, then so is $R\Gamma(\quot{X/G},\QQ)$.\par
The key idea is to use the factorization $\quot{X/T}\xrightarrow{g} \quot{X/N}\xrightarrow{h} \quot{X/G}$, where $N$ is the normalizer of $T$ in $G$. We note that per definition the map $g\colon\quot{X/T}\rightarrow \quot{X/N}$ is a $W$-torsor. As $W_{S}$ is finite \'etale, we see that after passage to an \'etale cover, the motive of $\quot{X/N}$ is equivalent to the Weyl invariant part of the motive of $\quot{X/T}$. \par
The map $h\colon\quot{X/N}\rightarrow \quot{X/G}$ is a $G/N$-torsor. For this map, we will show that $h_{*}1\simeq 1$. Let us explain this in a bit more detail. Up to taking $W$-invariant, we can identify $G/N$ with $G/B$. On $G/B$, we have stratification by Schubert cells, which are affine spaces. In this way, we can decompose $p_{*}1_{G/B}$ as a direct sum of twists and shifts indexed by $W$, where $p\colon G/B\rightarrow S$ is the structure map. The equivalence of $p_{*}1_{G/B}^{W}\simeq 1_{G/B}$ reduces to a question on ordinary $K$-theory. More precisely, it will be enough to show that $K_{0}(S)_{\QQ}\rightarrow K(G/T)^{W}_{\QQ}$ is an isomorphism, which after reduction to the case of Chow groups is classically known.\par 
Before coming to our main result of this section, we will first introduce group actions on motives.

\begin{rem}
\label{rem.tors.et}
	A key argument in this section, is that torsors under finite \'etale group schemes have related motives by taking invariants of the action. This is a major obstruction for the generalization to integral coefficients, as we expect that this is only satisfied if we have \'etale descent (c.f. \cite[Thm. 3.3.32]{CD1}). Nevertheless, using the theory of \'etale motives it should not be to difficult to have analogous results after inverting only the residue characteristics of our base scheme.
\end{rem}

\subsection{Preliminaries on reductive group schemes}
\label{sec.weyl}
Let $G$ be a reductive $S$-group scheme. In the following article, we want to work with reductive group schemes that admit maximal tori. Even though this is automatically satisfied if $S$ is the spectrum of a field, this is not true for general base schemes (cf. \cite{Conrad}). We will first proof our results under the assumption that the maximal torus of $G$ is split. This is not equivalent to splitness of $G$ or even quasi-splitness.\par 
In general one can find two main notions of \textit{quasi-split} in the literature. In SGA 3 a quasi-split reductive group scheme admits a Borel pair as well as a global section of the associated Dynkin diagram (cf. \cite[Exp. XXIV]{SGA3}). Another definition is given by Conrad (cf. \cite{ConradRed}), where he only enforces existence of a Borel subgroup. If $S$ is affine, this automatically implies the existence of a maximal torus contained in $B$.\par  As mentioned in the introduction, for applications it is enough to consider only the existence of a maximal torus. If the maximal torus is split, we can also talk about the associated Weyl group, as the next lemma shows.\par 
We want to thank Torsten Wedhorn for explaining this and elaborating that it is enough to consider reductive group schemes with split maximal tori instead of split reductive group schemes.

\begin{lem}
	Assume that $S$ is connected. Let $T$ be a split maximal torus inside $G$ with normalizer $N_{G}(T)$, then $W_{S}\coloneqq N_{G}(T)/T$ is represented by a constant finite group scheme.
\end{lem}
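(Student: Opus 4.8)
The plan is to reduce the statement to the well-known structure theory of reductive group schemes over a base, where the relevant representability is part of the standard package, and then to check that the finite étale group scheme $W_S \coloneqq N_G(T)/T$ is in fact \emph{constant} using the connectedness of $S$ together with the splitness of $T$. First I would recall that for a reductive $S$-group scheme $G$ with maximal torus $T$, the quotient $N_G(T)/T$ exists as an $S$-group scheme and is finite étale over $S$; this is \cite[Exp.~XXII, Cor.~5.3.10 and Exp.~XXIV]{SGA3}. Thus the only thing left to prove is that this finite étale group scheme is constant, i.e.\ isomorphic to the disjoint union of copies of $S$ indexed by an abstract finite group.

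The key input is that $T$ is \emph{split}, i.e.\ $T \cong \mathbb{G}_{\mathrm{m},S}^r$. Splitness of $T$ forces the root datum of $(G,T)$ to be defined over $S$ in a constant way: the character lattice $\That = \Hom_{S\text{-gp}}(T,\mathbb{G}_{\mathrm{m},S})$ is a constant (i.e.\ locally constant with constant value, since $S$ is connected) sheaf $\mathbb{Z}^r$, and the roots $\Phi \subseteq \That$ likewise form a constant finite subset (they are defined Zariski-locally from the weight decomposition of $\mathfrak{g}$, and two local descriptions agree on overlaps because the ambient lattice is constant). The Weyl group $W(\Phi)$ of this root system is then an abstract finite group $W$. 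Next I would construct a morphism from the constant group scheme $\underline{W}_S$ to $W_S = N_G(T)/T$: each reflection $s_\alpha$ is realized, Zariski-locally on $S$, by an actual section of $N_G(T)(U)$ (this uses the existence of the root subgroups $U_\alpha$ and the standard $\mathrm{SL}_2$ or $\mathrm{PGL}_2$ computation producing a representative of $s_\alpha$ in $N(T)$), and these local realizations glue at the level of $N_G(T)/T$ because the image of such a representative in $W_S$ is independent of the choices (any two representatives differ by an element of $T$). This gives a homomorphism $\underline{W}_S \to W_S$ of finite étale $S$-group schemes.

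Finally I would check this map is an isomorphism. It is enough to check on geometric fibres, where it recovers the classical fact $N_{G_{\bar s}}(T_{\bar s})/T_{\bar s} \cong W(\Phi)$, so the map is an isomorphism fibrewise; a map of finite étale $S$-schemes that is an isomorphism on all geometric fibres is an isomorphism (both sides are finite locally free of the same rank $|W|$ and the map is a closed immersion between them which is fibrewise surjective, hence an isomorphism by Nakayama, or simply: finite étale covers of a connected base are classified by $\pi_1(S)$-sets, and our map is one of $\pi_1$-sets that is bijective since $\pi_1$ acts trivially on $W_S$ because $\Phi$ is constant). Thus $W_S \cong \underline{W}_S$ with $W = W_S(S) = W(\Phi)$, which is what we wanted.

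The main obstacle, I expect, is not any single deep step but the bookkeeping needed to see that splitness of $T$ over the \emph{connected} base $S$ really does trivialize the $\pi_1(S)$-action on $N_G(T)/T$: a priori $W_S$ is only finite étale, and its constancy is exactly the statement that the outer monodromy acting on the root datum vanishes. The cleanest way to nail this down is to observe that the action of $\pi_1(S)$ on the fibre of $W_S$ factors through its action on the fibre of $\That$ (since $W(\Phi) \hookrightarrow \mathrm{GL}(\That)$ is $\pi_1$-equivariant), and the latter is trivial precisely because $T$ is split; everything else is a fibrewise check against the classical theory over an algebraically closed field.
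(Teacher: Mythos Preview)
Your proof is correct, but it takes a more constructive and somewhat heavier route than the paper. The paper avoids any explicit identification of $W_S$ with the abstract Weyl group $W(\Phi)$: it simply observes that the natural map $\iota\colon W_S \hookrightarrow \Autline_S(T)$ is an open immersion (SGA3, Exp.~XXII), that $\Autline_S(T)$ is the constant group scheme on $\Aut(X^\bullet(T))$ because $T$ is split, and that a finite \'etale scheme mapping by an open immersion into a separated scheme is automatically an open \emph{and closed} immersion. Connectedness of $S$ then forces $W_S$ to be a disjoint union of copies of $S$ indexed by some finite subset $H \subseteq \Aut(X^\bullet(T))$, and one is done. In contrast, you first extract the constant root system $\Phi$ from the splitness of $T$, then manufacture local sections of $N_G(T)/T$ representing each reflection $s_\alpha$, glue them, and finally verify the resulting $\underline{W(\Phi)}_S \to W_S$ is an isomorphism fibrewise. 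This buys you an explicit description of $W_S(S)$ as $W(\Phi)$, but at the cost of the bookkeeping you yourself flag (constancy of $\Phi$, gluing of representatives, checking relations). Your parenthetical $\pi_1$-argument at the very end---that the monodromy on $W_S$ factors through the trivial monodromy on $\That$ via $W(\Phi)\hookrightarrow \GL(\That)$---is essentially the paper's argument rephrased in Galois-theoretic language, and is the cleanest part of your write-up; you could have led with it and skipped the explicit construction entirely.
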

\begin{proof}
	Let us first recall some well known fact (cf. \cite[Exp. XXII \S 3]{SGA3}).\par 
	The quotient $W_{S}$ is represented by a finite \'etale $S$-scheme and that there is an open immersion $\iota\colon W_{S}\hookrightarrow \Autline_{S}(T)$. As $T$ is split, $\Autline_{S}(T)$ is the constant group associated to the automorphism group $\Aut(X^{\bullet}(T))$. In particular, $\Autline_{S}(T) \cong \coprod_{\Aut(X^{\bullet}(T))} S$.\par 
	Note that $\Autline_{S}(T)$ is separated and thus $\iota$ is finite. As any finite monomorphism is a closed immersion (cf. \cite[03BB]{stacks-project}), the morphism $\iota$ is an open and closed immersion. By assumption $S$ is connected and therefore the image of $\iota$ has to be a disjoint union of connected components, i.e. $W_{S}\cong \coprod_{H} S$, for some subset $H\subseteq \Aut(X^{\bullet}(T))$. Finally, since $W_{S}\rightarrow S$ is finite, the set $H$ has to be finite.
\end{proof}

\begin{defi}
	Assume that $S$ is connected and that $G$ admits a split maximal torus. Then we call the finite group $W\coloneqq (N_{G}(T)/T)(S)$ the \textit{Weyl group of $G$}\footnote{If $G$ is split reductive, then this agrees with the classical notion.}.
\end{defi}

Let us give some examples of reductive groups that admit maximal tori.

\begin{example}
\label{ex.red.grp}
Let us give examples of schemes $S$ and $G$ as above.\par 
\begin{enumerate}
	\item[$\bullet$] If $S=\Spec(k)$ is the spectrum of a field, then any reductive group admits a maximal torus (cf. \cite[Exp. XIV Thm 1.1]{SGA3}).
	\item[$\bullet$] If $S$ is any connected regular $\Btilde$-scheme of finite type, then any quasi-split reductive $S$-group scheme $G$ suffices.
	\item[$\bullet$] Assume, that $A$ is a Dedekind domain and $S=\Spec(A)$. Then $G$ admits a maximal torus if its generic fibre quasi-split (cf. Appendix \ref{sec.app}).
	\item[$\bullet$] Brian Conrad constructed examples of reductive groups over $\Spec(\ZZ)$ that do not admit maximal tori (cf. \cite{Conrad}). All of these examples concern non-classical reductive groups and the author is not aware of any other counterexamples.
\end{enumerate}
\end{example}


\subsection{Torsors under finite groups}

\label{sec.fin}

To warm up, let us first look at torsors under finite groups. In particular, it will be clear that the canonical map $\quot{X/T}\rightarrow \quot{X/N}$, considered in the beginning of Section \ref{sec.beg.eq}, yields an equivalence of motives after taking Weyl invariants. In particular, as taking invariants under a finite group is a limit and Tate-motives are closed under finite limits, we can use this for Tateness properties. We will not need any connectedness hypothesis on $S$ for the results of this section.

First, we need to understand group actions on motives and taking fixed points under these actions.\par

An action of a group $\Gamma$ on a motive $M$ in $\DM(X)$ is map $\Gamma\rightarrow \Aut_{\DM(S)}(M)$ that is a group homomorphism on $\pi_{0}$. Or equivalently, it is a map $\Sigma_{\Gamma}\rightarrow \DM(X)$ (here $\Sigma_{\Gamma}$ denotes the deloop of the group $\Gamma$ seen as a discrete category - usually this is denoted with $\B{\Gamma}$ but to avoid confusion, we changed the notation).

\begin{defi}
	Let $M$ be a motive in $\DM(X)$ with an action by a finite group $\Gamma$. Then we define the \textit{homotopy $\Gamma$-fixed points of $M$}, denoted by $M^{h\Gamma}$, as the limit of the action map $\Sigma_{\Gamma}\rightarrow \DM(X)$.
\end{defi}

\begin{defrem}
\label{rem.fix}
	Let $X$ be an Artin $S$-stack locally of finite type and with an action by a finite group $\Gamma$. For each $\gamma$, we have an action map $a_{\gamma}\colon X\rightarrow X$. By construction of $\DM(X)$ this defines a map $\gamma.\colon 1_{X}\rightarrow 1_{X}$ by lax-monoidality of the $*$-pushforward. This endows any $M\in\DM(X)$ with an action via $\gamma.$. We define the \textit{$\Gamma$-fixed points of $M$}, denoted by $M^{\Gamma}$ as the image\footnote{Note that (the homotopy category of) $\DM(X)$ is pseudo-abelian and hence for any idempotent operator we can define its image.} of the map 
	$$
		p = \frac{1}{\# \Gamma}\sum_{\gamma\in \Gamma} \gamma.
	$$
	The canonical map $M^{\Gamma}\rightarrow M$ defines an equivalence $M^{\Gamma}\xrightarrow{\sim}M^{h\Gamma}$ (cf. \cite[3.3.21]{CD1}).\par 
	Let $X\rightarrow Y$ be a map of $\Gamma$-torsor of Artin stacks locally of finite type over $S$ (here we see $\Gamma$ as a constant group scheme on $S$). Then the $\Gamma$-torsor $Y$-automorphisms of $X$ is isomorphic to $\Gamma$ and thus we get a $\Gamma$-action on $f_{*}f^{*}M$ for any $E\in \DM(Y)$ (via $*$-pushforward of a $\Gamma$-torsor $Y$-automorphism). Note that $f_{*}f^{*}E$ can be used to compute the motivic cohomology of $X$ with coefficients in $E$ for smooth $f$ as 
	$$
		\Hom_{\DM(Y)}(1_{Y},f_{*}f^{*}E)= \Hom_{\DM(Y)}(f^{*}1_{Y},f^{*}E) = \Hom_{\DM(Y)}(M_{Y}(X),E).
	$$
	In particular, we see that\footnote{Kernels in triangulated categories are monomorphisms and thus on the level of homotopy groups induce short exact sequences, dualy cokernels induce epimorphisms. Thus, applying the homotopy invariance functor to the Hom-spectrum $\Homline_{\DM(Y)}(1_{Y},f_{*}f^{*}E)$, we see that $\pi_{0}$ precisely gives us the invariants of the induced group action on $\pi_{0}$.}  
	$$
		\Hom_{\DM(Y)}(1_{Y},(f_{*}f^{*}E)^{\Gamma}) = \Hom_{\DM(Y)}(M_{Y}(X),E)^{\Gamma}.
	$$
	Note that as $\Gamma$ is finite, the $\Gamma$-torsor $f$ is \'etale and proper, hence $f_{*}f^{*} \simeq f_{!}f^{!}$.
\end{defrem}

Let $k$ be a field and $X$ a scheme locally of finite type over $\Spec(k)$. Further, let $K/k$ be a finite Galois extension and let us denote the base change of $X$ to $\Spec(K)$ by $X_{K}$. The Chow groups of $X$ and $X_{K}$ are related by the fixed points under the Galois group, i.e. $\textup{CH}_{n}(X) = \textup{CH}_{n}(X_{K})^{\Gal(K/k)}$. As one expects, this also holds motivically. This is due to Ayoub and Cisinki-D\'eglise. As noted in Remark \ref{rem.tors.et}, we do not expect this to hold, when we do not impose \'etale descent. Thus, we do not expect the next lemma to hold with integral coefficients. 

\begin{lem}
\label{lem.torsor.fixed}
	Let $\Gamma$ be a finite group. Let $f\colon X\rightarrow Y$ be a $\Gamma$-torsor of Artin $S$-stacks locally of finite type. Then the unit factors as $\id\rightarrow (f_{*}f^{*})^{\Gamma}\rightarrow f_{*}f^{*}$ and the map $\id\rightarrow (f_{*}f^{*})^{\Gamma}$ is an equivalence.
\end{lem}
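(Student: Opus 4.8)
The plan is to reduce the statement to an \'etale-descent computation. First I would observe that since $\Gamma$ is finite and seen as a constant group scheme over $S$, the $\Gamma$-torsor $f\colon X\to Y$ is finite \'etale (in particular schematic, proper and smooth), so $f_*f^* \simeq f_!f^!$, and all the functors involved make sense after base change along a smooth atlas of $Y$. The key point is that the claim is local on $Y$ for the \'etale (indeed $h$-) topology: by the gluing construction of $\DM$ on Artin stacks and $h$-descent (\cite[Thm. 2.2.16]{RS1}, \cite[Thm. 2.1.13]{RS1}), both $\id_{\DM(Y)}$ and $(f_*f^*)^\Gamma$ are limits over the \v{C}ech nerve of an atlas $Y'\to Y$ of the analogous functors on the schematic pieces, and the unit map together with its factorization through $\Gamma$-fixed points is compatible with this. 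So I may assume $Y$ is a (qcqs, locally of finite type) $S$-scheme; then $X$ is one too, and $X\to Y$ is a finite \'etale $\Gamma$-torsor of schemes.

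Next I would exhibit the factorization $\id \to (f_*f^*)^\Gamma \to f_*f^*$. The unit $\id \to f_*f^*$ is a map in $\DM(Y)$, and the $\Gamma$-action on $f_*f^*$ (via $*$-pushforward of the $\Gamma$-torsor $Y$-automorphisms of $X$, as in Remark \ref{rem.fix}) fixes the image of the unit: for each $\gamma\in\Gamma$ the automorphism $a_\gamma\colon X\to X$ satisfies $f\circ a_\gamma = f$, so $\gamma.\circ \eta = \eta$ where $\eta$ is the unit. Hence $\eta$ factors through the image of the idempotent $p = \frac{1}{\#\Gamma}\sum_\gamma \gamma.$, which by Remark \ref{rem.fix} is $(f_*f^*)^\Gamma \xrightarrow{\sim} (f_*f^*)^{h\Gamma}$. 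This gives the claimed factorization formally.

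It then remains to prove that $\id \to (f_*f^*)^\Gamma$ is an equivalence. Since $\DM$ is compactly generated and the unit $1_Y$ together with its twists $1_Y(n)[m]$ generate, it suffices to check that for all $E\in\DM(Y)$ the induced map $\Hom_{\DM(Y)}(E, 1_Y) \to \Hom_{\DM(Y)}(E, (f_*f^*)^\Gamma)$ is an equivalence of spectra, or dually — using that $f$ is finite \'etale so $f_*f^*E \simeq f_!f^!E$ and unwinding as in Remark \ref{rem.fix} — that $M_Y(X)^\Gamma \to 1_Y$ is an equivalence, equivalently $R\Gamma_Y(X,\QQ)^{\Gamma} \simeq 1_Y$. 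This is where I would invoke the descent input attributed to Ayoub and Cisinski--D\'eglise: for a finite \'etale $\Gamma$-torsor $f\colon X\to Y$ of schemes, $\DM$ satisfies $\Gamma$-Galois descent, i.e. the natural functor $\DM(Y)\to \DM(X)^{h\Gamma}$ is an equivalence (this rests on \'etale descent for Beilinson motives, \cite[Thm. 3.3.32]{CD1}, and the fact that $f$ is an \'etale cover with automorphism group $\Gamma$). Applying this equivalence to the unit object gives $1_Y \simeq (f_*f^*1_Y)^{h\Gamma} = (f_*f^*1_Y)^{\Gamma}$, and by projection formula / tensoring this upgrades to the statement for the endofunctor $f_*f^*$ rather than just the object $f_*f^*1_Y$. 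The main obstacle I anticipate is bookkeeping: making the reduction to schemes compatible with the $\Gamma$-action and the idempotent $p$ through the \v{C}ech nerve (so that one really gets the statement for the \emph{functor} $(f_*f^*)^\Gamma$, not merely objectwise), and correctly identifying the $\Gamma$-action coming from $Y$-automorphisms of the torsor with the one used to form $(f_*f^*)^{h\Gamma}$; the actual descent equivalence is a black box once those identifications are in place.
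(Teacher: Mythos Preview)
Your proposal is correct and follows essentially the same strategy as the paper: reduce to the case where $Y$ is a scheme via a smooth atlas, then conclude by \'etale descent for Beilinson motives. The paper packages the final step slightly differently---it uses $h$-descent to reduce further to a \emph{trivial} $\Gamma$-torsor and then cites \cite[Prop.~2.1.166]{ayoub}---whereas you invoke Galois descent $\DM(Y)\simeq\DM(X)^{h\Gamma}$ directly and identify the inverse of $f^*$ with $(f_*-)^{h\Gamma}$; unwinding the \v{C}ech nerve of a $\Gamma$-torsor shows these are the same argument in different clothing.

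One correction: your aside that ``the unit $1_Y$ together with its twists $1_Y(n)[m]$ generate'' $\DM(Y)$ is false in general (that would make every motive Tate). Fortunately you never actually use this---your Galois-descent argument together with the projection formula $f_*f^*E\simeq E\otimes f_*1_X$ and the fact that $E\otimes(-)$ commutes with the finite limit $(-)^{h\Gamma}$ already gives the statement for the endofunctor, so you can simply delete that sentence. Likewise the ``dually \ldots $M_Y(X)^\Gamma\to 1_Y$'' detour is unnecessary and slightly misleading (the counit $f_!f^!\to\id$ is a different map from the unit $\id\to f_*f^*$ even when $f_!f^!=f_*f^*$); drop it and go straight to the descent input.
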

\begin{proof}
	The factorization of the unit follows from the description of $(f_{*}f^{*})^{\Gamma}$ as a limit. We claim that $\id\rightarrow (f_{*}f^{*})^{\Gamma}$ is an equivalence. It suffices to check this after base change to a smooth atlas of $Y$. In particular, we may assume that $Y$ is represented by a scheme. Since $\DM_{\QQ}$ satisfies $h$-descent, we may assume\footnote{Using \cite[Prop. 3.3.31]{CD1}, we see that $M_{Y}(X)^{\Gamma}\simeq \varphi_{*}\varphi^{*}1_{Y}$, where $\varphi\colon (\Xscr,\Gamma)\rightarrow Y$ is the induced morphism of the diagram $\Sigma_{\Gamma}\rightarrow \Sch_{S}$ that maps the single point of the category $\Sigma_{\Gamma}$ to $X$ and the morphisms to the actions. Then we can base change using \cite[Prop. 3.1.17]{CD1}.} that $f$ is a trivial $\Gamma$-torsor, where it follows from \cite[Prop. 2.1.166]{ayoub}.
\end{proof}

\begin{example}
	Let us consider the $W$-torsor $f\colon \quot{X/T}\rightarrow \quot{X/N}$  in the beginning of Section \ref{sec.beg.eq}. As $W$ is finite, Lemma \ref{lem.torsor.fixed} implies that $f_{*}f^{*}1_{\quot{X/T}}^{W} \simeq 1_{\quot{X/N}}$. After $\sharp$- resp. $*$-pushforward to the base $S$ along the structure map $\quot{X/N}\rightarrow S$, we see that
	$$
		R\Gamma_{S}(\quot{X/N}\QQ) \simeq R\Gamma_{S}(\quot{X/T},\QQ)^{W}.
	$$ 
	Now assume that $S=\Spec(k)$ is the spectrum of a field. Applying the latter equivalence to motivic cohomology yields 
	$$
		A^{n}_{N}(X,m)\cong A^{n}_{T}(X,m)^{W}
	$$
	for the equivariant intersection theory of $X$.
\end{example}

\begin{rem}
\label{rem.invariants.Tate}
Let us remark that taking homotopy fixed points under a finite group is a finite limit. Since the $\infty$-category of Tate-motives is stable it is closed under finite limits. Thus, if $\Gamma$ is a finite group acting on a Tate motive $M$, then also $M^{\Gamma}$ is Tate.\par 
Now consider the situation of the example above. Then we see that if $R\Gamma_{S}(\quot{X/T}\QQ) $ is Tate, then so is $R\Gamma_{S}(\quot{X/N}\QQ) $ (similarly for $R\Gamma$).
\end{rem}

\subsection{The relation between the motives of $\quot{X/T}$ and $\quot{X/G}$}
\label{sec.N}
Our goal in this subsection is to show that the map $\quot{X/N}\rightarrow \quot{X/G}$ yields and equivalence of motives up to Weyl-invariants. This will be achieved by analyzing the motive of $G/N$. Using that $G/T\rightarrow G/N$ is again a $W$-torsor, we will reduce to the case of the flag variety $G/B$. For this, we will need that motivic cohomology does not see the action of split unipotent subgroups.\par
Now let us recall the definition of a split unipotent subgroup. These are extensions of vector bundles, e.g. a Borel $B$ containing a maximal split torus $T$ is an extension of $T$ by a split unipotent subgroup.

\begin{defi}
	An algebraic $S$-group scheme $U$ is called \textit{split-unipotent} if there exists a normal series, i.e. a filtration $U= U_{n}\supseteq U_{n-1}\supseteq\dots\supseteq U_{0} = 0$ such that $U_{i}$ is normal in $U_{i+1}$, with each successive quotient is isomorphic a vector bundle $\VV(\Ecal)$, where $\Ecal$ is a finite locally free $\Ocal_{S}$-module.
\end{defi}

\begin{example} The $S$-subgroup scheme of unipotent upper triangular matricies $\UU_{n,S}$ in $\GL_{n,S}$ is split unipotent. More generally, let $G$ be a reductive $S$-group scheme and $P$ be a parabolic in $G$, then the unipotent radical $R_{u}(P)$ of $P$ is split unipotent (cf. \cite[Exp. XXVI Prop. 2.1]{SGA3}).
\end{example}

\begin{lem}
\label{lem.unip.ext}
	Let $G$ be a smooth linear algebraic $S$-group scheme. Consider a split exact sequence of $S$-group schemes
	$$
	1\rightarrow U\rightarrow G\rightarrow H\rightarrow 1
	$$
	where $U$ is split unipotent. Choose a splitting $H\hookrightarrow G$. Let $X$ be an $S$-scheme locally of finite type with an $G$-action. Let us denote the induced map $\quot{X/H}\rightarrow \quot{X/G}$ by $\pi$. Then the $!$-pullback induces a fully faithful functor $$\pi^{!}\colon \DM(X/G)\hookrightarrow \DM(X/H).$$
\end{lem}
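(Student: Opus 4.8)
The plan is to reduce, by descent, to the case where the base $Y := \quot{X/G}$ is a scheme, and then to identify the fibers of $\pi$ as (iterated) affine bundles, whose $!$-pullback is fully faithful by homotopy invariance. First I would base change along a smooth atlas $Y' \to \quot{X/G}$: since $\DM$ satisfies $h$-descent and $\pi$ is schematic (indeed representable, as it is the quotient of the $G/H$-bundle $X/H \to X/G$), it suffices to prove full faithfulness of $\pi'^{!}$ for the pulled-back morphism $\pi'\colon X/H \times_{X/G} Y' \to Y'$; so we may assume $\quot{X/G}$ is a scheme, hence $\quot{X/H}$ is a scheme as well. Concretely, after this reduction $\pi$ is the structure map of the fppf quotient $G/H \to S$-bundle associated to the $G$-torsor underlying $\quot{X/H}\to\quot{X/G}$; and since $H \hookrightarrow G$ is a splitting of $1\to U\to G\to H\to 1$, one has $G \cong U \rtimes H$ as $H$-schemes, so $G/H \cong U$ as schemes (with $H$ acting by conjugation). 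Thus $\pi$ is a $U$-bundle.

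The key step is then: $\pi^{!}$ is fully faithful whenever $\pi$ is (Zariski- or fppf-)locally on the target a split unipotent bundle. Using the filtration $U = U_n \supseteq \dots \supseteq U_0 = 0$ with successive quotients vector bundles $\VV(\Ecal_i)$, I would factor $\pi$ as a tower $\quot{X/H} = Z_0 \to Z_1 \to \dots \to Z_n = \quot{X/G}$, where each $Z_i \to Z_{i+1}$ is a torsor under a vector bundle $\VV(\Ecal_i)$ (more precisely, the bundle associated to the successive subquotient). Each such map is a torsor under a vector bundle, hence is an $\AA^r$-bundle Zariski-locally on the base, and by homotopy invariance of $\DM$ (which holds for vector bundle torsors — they are locally trivial and one applies $\AA^1$-invariance together with Zariski descent) the unit $\id \to g_* g^*$ is an equivalence for $g\colon Z_i \to Z_{i+1}$; equivalently $g^{!}$ (which agrees with $g^*$ up to the invertible twist $\langle \Omega_{Z_i/Z_{i+1}}\rangle$ by relative purity, $g$ being smooth) is fully faithful. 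Composing the tower, $\pi^{!}$ is fully faithful as a composite of fully faithful functors.

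The main obstacle I anticipate is the bookkeeping in the descent step: one must check that forming the successive $\VV(\Ecal_i)$-torsor tower commutes with base change along the atlas, and that the normal-series filtration of $U$ descends compatibly with the $H$-conjugation action so that the quotient stacks $Z_i$ are well defined and the maps between them are genuinely vector-bundle torsors (not merely torsors under a smooth group with unipotent fibers). This is where the hypothesis that $U$ is \emph{split} unipotent — so that the subquotients are honest vector bundles $\VV(\Ecal)$ with $\Ecal$ finite locally free — is essential, since $\AA^1$-homotopy invariance of $\DM$ applies to $\AA^r$-bundles but not to arbitrary unipotent-group torsors in bad characteristic. A clean way to organize this is to first establish, as a standalone lemma, that for any torsor $g\colon Z \to Z'$ of Artin stacks under a vector bundle $\VV(\Ecal)$ the functor $g^{!}$ is fully faithful (by reduction to the scheme case via $h$-descent, then Zariski-local triviality, then $\AA^1$-invariance), and then apply it $n$ times; the remaining content is purely the identification $G/H \cong U$ and the compatibility of the filtration with the actions, which is routine group-scheme theory citing \cite[Exp. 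XXVI]{SGA3}.
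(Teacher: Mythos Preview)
Your approach is correct and matches the paper's in outline: identify $\pi$ as a $U$-bundle (via $G/H\cong U$), reduce by descent, and invoke homotopy invariance. The paper's proof is three lines and does exactly this.

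One simplification worth noting: rather than factoring $\pi$ globally as a tower $Z_0\to\cdots\to Z_n$ of vector-bundle torsors (which forces you to worry about whether the normal series of $U$ is $H$-equivariant so that each $Z_i$ is well defined), the paper simply uses \'etale descent to trivialize the $U$-bundle outright, reducing to the claim that the $!$-pullback along the structure map $U\to S$ itself is fully faithful. At that point the filtration of $U$ by vector-bundle subquotients is just a filtration of a fixed $S$-scheme, no equivariance needed, and homotopy invariance applies directly. This bypasses the bookkeeping you flagged as the ``main obstacle.''
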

\begin{proof}
	We have to check that the unit $\id\rightarrow \pi_{!}\pi^{!}$ is an equivalence.
	The natural map $\pi$ is a $U$-bundle. In particular, by \'etale descent it is enough to show that the $!$-pullback along $U\rightarrow S$ is fully faithful. But this follows from homotopy invariance.
\end{proof}

Let $G$ be a split reductive $S$-group scheme and let $B$ be a Borel containing $T$ inside $G$. Let $X$ be an $S$-scheme locally of finite type with $B$-action. The above lemma shows that $\DM(\quot{X/B})\hookrightarrow \DM(\quot{X/T})$. We also have that $G/T\rightarrow G/N$ is a $W$-torsor. The results on torsors under finite groups combined with the above lemma then yields that $R\Gamma_{S}(G/N,\QQ) \simeq R\Gamma_{S}(G/B,\QQ)^{W}$. The next result will use this fact.

\begin{prop}
\label{prop.1.BG}
	Let $G$ be a reductive $S$-group scheme with maximal torus $T$. Let $N$ denote the normalizer of $T$ in $G$. Further, let $X$ be an $S$-scheme locally of finite type with $G$-action and $f\colon \quot{X/N}\rightarrow\quot{X/G}$ the canonical map. Then the unit $1_{\quot{X/G}}\rightarrow f_{*}f^{*}1_{\quot{X/G}}$ is an equivalence. In particular, $f_{*}1_{\quot{X/N}}$ is a Tate motive in $\DM(\quot{X/G})$.
\end{prop}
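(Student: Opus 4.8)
The plan is to reduce the statement, by descent along the atlas of $\quot{X/G}$, to the claim that the unit $1_{S}\to R\Gamma_{S}(G/N,\QQ)$ is an equivalence, and then to deduce this from the two previous subsections together with the classical computation of the $W$-invariants of the $K$-theory, equivalently the Chow ring, of the flag variety. (Here I work in the split situation of this subsection, so I fix a Borel $B$ with $T\subseteq B$.)

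First I would note that $f$ becomes, after base change along the smooth surjective atlas $a\colon X\to\quot{X/G}$, the projection $\mathrm{pr}\colon X\times_{S}G/N\to X$: indeed $\quot{X/N}\simeq\quot{(X\times_{S}G/N)/G}$, and for any $G$-equivariant morphism $Y\to X$ one has $\quot{Y/G}\times_{\quot{X/G}}X\simeq Y$. Since $a^{*}$ is conservative and, $a$ being smooth, commutes with $f_{*}$ (smooth base change, cf. \cite{CD1}), it suffices to show that the unit $1_{X}\to\mathrm{pr}_{*}\mathrm{pr}^{*}1_{X}$ is an equivalence. As $\mathrm{pr}$ is itself the base change of the structure morphism $q\colon G/N\to S$ along $r\colon X\to S$, and as $R\Gamma_{S}(G/N,\QQ)=q_{*}1_{G/N}$ will turn out to be dualizable (see below), the Künneth/base-change equivalence identifies $\mathrm{pr}_{*}\mathrm{pr}^{*}1_{X}\simeq r^{*}R\Gamma_{S}(G/N,\QQ)$ compatibly with units; so everything reduces to showing that $1_{S}\to R\Gamma_{S}(G/N,\QQ)$ is an equivalence, call this $(\dagger)$.

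To prove $(\dagger)$: the morphism $G/T\to G/B$ is a Zariski-locally trivial torsor under the split unipotent group $R_{u}(B)$, so by homotopy invariance (Lemma \ref{lem.unip.ext}) one has $R\Gamma_{S}(G/T,\QQ)\simeq R\Gamma_{S}(G/B,\QQ)$, which by Example \ref{ex.flag} is $\bigoplus_{w\in W}1_{S}\langle \ell(w)-d\rangle$ with $d=\dim(G/B)$ — a \emph{compact} Tate motive, hence dualizable (which also justifies the Künneth step above). Next, $g\colon G/T\to G/N$ is a torsor under the finite constant group $W$ (it is the quotient of the $N$-torsor $G\to G/N$ by $T\trianglelefteq N$), so by Lemma \ref{lem.torsor.fixed} the natural map identifies $R\Gamma_{S}(G/N,\QQ)\simeq\bigl(R\Gamma_{S}(G/T,\QQ)\bigr)^{W}$, and under this identification $(\dagger)$ becomes the $W$-invariants of the unit $1_{S}\to R\Gamma_{S}(G/T,\QQ)$; in particular $R\Gamma_{S}(G/N,\QQ)$ is Tate (Remark \ref{rem.invariants.Tate}). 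So $(\dagger)$ is equivalent to $\bigl(R\Gamma_{S}(G/T,\QQ)\bigr)^{W}\simeq 1_{S}$.

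For this last point, both sides are Tate and the $1_{S}(m)$ generate $\DTM(S)$, so it is enough to check that the map is an isomorphism on $\Hom_{\DM(S)}(1_{S},-(n)[m])$ for all $n,m$; since $W$ acts on $G/T$ through $S$-automorphisms it preserves the bigrading, so this amounts to the pullback $A^{n}(S,m)_{\QQ}\to A^{n}(G/T,m)_{\QQ}^{W}$ being an isomorphism for all $n,m$. As $S$ is regular, rational motivic cohomology is the Adams-weight grading of rational $K$-theory, $W$-equivariantly; and since $M_{S}(G/T)$ splits as a finite sum of Tate twists, $K_{\ast}(G/T)_{\QQ}\simeq K_{\ast}(S)_{\QQ}\otimes_{K_{0}(S)_{\QQ}}K_{0}(G/T)_{\QQ}$ with $W$ acting $K_{\ast}(S)_{\QQ}$-linearly, and since $(-)^{W}$ is exact over $\QQ$ and commutes with this base change, the whole question collapses to the classical statement that $K_{0}(S)_{\QQ}\to K_{0}(G/T)_{\QQ}^{W}=K_{0}(G/B)_{\QQ}^{W}$ is an isomorphism — equivalently, passing to the gradeds of the $\gamma$-filtration, that $A^{\bullet}(S)_{\QQ}\to A^{\bullet}(G/B)_{\QQ}^{W}$ is an isomorphism (the Chow ring of the flag bundle is a twisted form of the $W$-coinvariant algebra, whose $W$-invariants reduce to $A^{\bullet}(S)_{\QQ}$; due to Demazure, and to Steinberg in the $K$-theoretic form, cf. also \cite{KriC}). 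This proves $(\dagger)$, and with it the Proposition, the final assertion being immediate since $f_{*}f^{*}1_{\quot{X/G}}=f_{*}1_{\quot{X/N}}\simeq 1_{\quot{X/G}}$ is manifestly Tate. The main obstacle is precisely this last step: in $\DM$ the Tate twists do not separate the $W$-action, so the invariants of $R\Gamma_{S}(G/T,\QQ)$ cannot be read off directly from Example \ref{ex.flag}, and one must pass to $K$-theory — where everything is concentrated in degree $0$ — to invoke the classical computation; the base-change and Künneth bookkeeping of the first step, by contrast, is routine given the six-functor formalism already in place.
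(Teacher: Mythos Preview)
Your proof is correct and follows essentially the same route as the paper: reduce to the structure map $G/N\to S$, pass to $G/T$ and then $G/B$ via the $W$-torsor and unipotent-radical steps, observe that everything is Tate, and then verify the remaining identity on $K$-theory. Two small points, and one genuine difference in the endgame, are worth noting.

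\medskip

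\textbf{Reduction to the split case.} The statement is for $G$ with a maximal torus $T$, not assumed split; the paper spends one line on this (``by \'etale descent we may assume $G$ is split with respect to $T$''), and you tacitly assume it from the outset. You should say it.

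\textbf{The base-change step.} Your appeal to ``K\"unneth/base-change via dualizability'' to go from $\mathrm{pr}\colon X\times_{S}G/N\to X$ back to $q\colon G/N\to S$ is correct but compressed: $M_{S}(G/N)$ is a retract of $M_{S}(G/B)=\bigoplus_{w}1_{S}\langle \ell(w)\rangle$, hence rigid, so $q_{*}1_{G/N}=M_{S}(G/N)^{\vee}$ and $r^{*}$ commutes with duals; alternatively one routes through the proper map $G/B\to S$, where proper base change applies directly. The paper simply says ``by \'etale descent we may assume $f$ is $G/N\to S$'', which hides the same point.

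\textbf{The endgame.} Here you take a cleaner path than the paper. Both arrive at $K_{\ast}(S)_{\QQ}\to K_{\ast}(G/T)_{\QQ}^{W}$; the paper then base-changes along the Chevalley model to reduce to $S=\Spec(\ZZ)$, and from there (via $K_{n}(\ZZ)_{\QQ}\simeq K_{n}(\QQ)_{\QQ}$ for $n\neq 1$ and $K_{1}(\ZZ)_{\QQ}=0$) to $S=\Spec(\QQ)$, where Krishna's result on higher Chow groups applies. You instead use the cell decomposition to write $K_{\ast}(G/T)_{\QQ}\simeq K_{\ast}(S)_{\QQ}\otimes_{K_{0}(S)_{\QQ}}K_{0}(G/T)_{\QQ}$ with $W$ acting $K_{\ast}(S)_{\QQ}$-linearly, so that exactness of $(-)^{W}$ over $\QQ$ reduces everything to $K_{0}(S)_{\QQ}\to K_{0}(G/B)_{\QQ}^{W}$, i.e.\ to the Chow ring of the flag bundle over $S$, where Demazure's description as the $A^{\bullet}(S)_{\QQ}$-coinvariant algebra immediately gives the answer. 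This avoids the detour through $\Spec(\ZZ)$ and the slightly delicate handling of $K_{1}$; the paper's approach, on the other hand, is more explicit about why the $W$-action is ``the same'' over $S$ as over a point (this is the content of its tensor-product lemma for $K$-spectra), which in your argument is packaged into the single assertion that the $W$-action is $K_{\ast}(S)_{\QQ}$-linear. Both are valid; yours is shorter.
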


Let us summarize the idea of the proof. We will show that the unit $1_{\quot{X/G}}\rightarrow f_{*}f^{*}1_{\quot{X/G}}$ is an equivalence. To see this we will use that $\quot{X/N}\rightarrow \quot{X/G}$ is a $G/N$-bundle.
 In particular, after \'etale-descent, we may assume that $f$ is given by the projection $G/N\rightarrow S$. Again, by \'etale descent, we may assume that $G$ is split with respect to $T$. Let $B$ be a Borel of $G$ containing $T$. Using our calculations about torsors under finite groups, it is enough to show that the induced map $1_{S}\rightarrow R\Gamma_{S}(G/T,\QQ)^{W}$ is an equivalence. But as $R\Gamma_{S}(G/T,\QQ)^{W}\simeq R\Gamma_{S}(G/B,\QQ)^{W}$ we will reduce this question to a classical question about Chow rings, at least when $S$ is the spectrum of a field, namely if the pullback map
$$
	\textup{CH}_{n}(S,m)_{\QQ}\rightarrow \textup{CH}_{n}(G/B,m)_{\QQ}^{W}
$$
is an isomorphism for all $n,m\in \ZZ$. But as we can identify $\textup{CH}_{n}(G/B,m)_{\QQ}^{W}$ with $\textup{CH}_{n}(G/G,m)_{\QQ}$, this is clear (cf. \cite[Cor. 8.7]{KriC}). In the case when $S$ is not the spectrum of a field, we have to use $K$-theory and as the flag variety admits an integral model, we can reduce to $S=\Spec(\ZZ)$. Up to $K_{1}$ the rational $K$-theory of $\ZZ$ and $\QQ$ are isomorphic. Using that the flag variety admits a stratification by affine spaces, we may assume the $S=\Spec(\QQ)$, as the only part we have to worry about, $K_{1}(\ZZ)_{\QQ}$, vanishes.

\begin{proof}[Proof of Proposition \ref{prop.1.BG}]
Throughout this proof, we will denote for readability the structure map of an Artin stack $\Xcal$ to $S$ with $p_{\Xcal}$.\par
	To show that the unit  $$1_{\quot{X/G}} \rightarrow f_{*}f^{*}1_{\quot{X/G}}$$ is an equivalence, we may  assume by \'etale descent that the map $f$ is given by the structure map $G/N\rightarrow S$. Again, by \'etale descent, we may assume that $G$ is split with respect to $T$. Then it is enough to show that the unit $1_{S}\rightarrow R\Gamma_{S}(G/N,\QQ)$ is an equivalence, since the pullback of this equivalence along $p_{X}$ yields the desired equivalence. \par 
	Note that the natural map $g\colon G/T\rightarrow G/N$ is naturally a $W$-torsor. Hence, by Lemma \ref{lem.torsor.fixed}, we see that $1_{G/N}\simeq (g_{*}1_{G/T})^{W}$ and thus $R\Gamma_{S}(G/N,\QQ)\simeq p_{G/N*}(g_{*}1_{G/T})^{W}$. Since $p_{G/N*}$ is a right adjoint it commutes with limits. Thus, we have 
	$$
		p_{G/N*}(g_{*}1_{G/T})^{W}\simeq (p_{G/N*}g_{*}1_{G/T})^{W}\simeq (p_{G/T*}1_{G/T})^{W}\simeq R\Gamma_{S}(G/T,\QQ)^{W}
	$$
	By Lemma \ref{lem.unip.ext}, we see that $R\Gamma_{S}(G/T,\QQ)\simeq R\Gamma_{S}(G/B,\QQ)$. By Example \ref{ex.flag}, we have that 
	$$R\Gamma_{S}(G/B,\QQ) \simeq \bigoplus_{w\in W} 1_{S}\langle l(w)-n\rangle,
	$$
	where $n$ is the relative dimension of the flag variety $G/B$.  In particular, $R\Gamma_{S}(G/B,\QQ)$ and thus $R\Gamma_{S}(G/T,\QQ)$ is Tate. As the $W$-invariants are defined as an image of a map, we see that $R\Gamma_{S}(G/T,\QQ)^{W}$ is also Tate. The $\infty$-category of Tate motives over $S$ is the stable subcategory of $\DM(S)$ generated by $1(r)$, for $r\in \ZZ$. Therefore, the natural map $1_{S}\rightarrow R\Gamma_{S}(G/T,\QQ)^{W}$ is an equivalence if and only if the induced map 
	\begin{equation}
	\label{eq.BG.1}
		\Homline_{\DM(S)}(1_{S}(r),1_{S})\rightarrow \Homline_{\DM(S)}(1_{S}(r),R\Gamma_{S}(G/T,\QQ)^{W})
	\end{equation}	
is an equivalence for all $r\in \ZZ$ (here $\Homline$ denotes the inner-$\Hom$). If $r>0$, then the $m$-th homotopy group of the left hand side is isomorphic to $K_{-2r+m}(S)^{(-r)}$ and thus vanishes as the negative Adams eigenspaces vanish per definition (cf. \cite{Soule}). By Remark \ref{rem.fix} and the computation of $p_{G/B*} 1_{G/B}$, we see that 
	$$
		\pi_{m}\Homline_{\DM(S)}(1_{S}(r),R\Gamma_{S}(G/T,\QQ)^{W}) \cong (\bigoplus_{w\in W}K_{-2r+m}(S)^{(l(w)-n-r)})^{W},
	$$
	which also vanishes for $r> 0$ as $l(w)-n\leq 0$. Therefore, we may assume from now on that $r\leq 0$.
	Before continuing with the proof, let us first assume that $S=\Spec(k)$ is the spectrum of a field $k$. Then by the above, we are reduced to show that the restriction map
$$
A^{r}(S,m-2r)\rightarrow A^{r}(G/B,m-2r)^{W}
$$ 
is an isomorphism (by homotopy invariance and identification of motivic cohomology with higher Chow-theory). But as $A^{r}(G/B,m-2r)^{W}\cong A^{r}(G/G,m-2r) = A^{r}(S,m-2r)$ (cf. \cite[Cor. 8.7]{KriC}), the restriction above yields the identity on $A^{r}(S,m-2r)$ which is trivially an isomorphism. Hence, we are done in the case of $S=\Spec(k)$.\par
Next, let us show how to reduce to the case of $S=\Spec(k)$.
	As $r\leq 0$, we may use commutativity of $\Homline$ with limits, to see that (\ref{eq.BG.1}) is an equivalence if and only if 
	\begin{equation}
		\label{eq.BG.2}
		\Homline_{\DM(S)}(1_{S}\langle r\rangle,1_{S})\rightarrow \Homline_{\DM(S)}(1_{S}\langle r\rangle,R\Gamma_{S}(G/T,\QQ)^{W})
	\end{equation}
	is an equivalence.
		Further, since $S$ is finite dimensional (cf. \cite[Prop. 14.109]{WED}), it is a fact that for every $n\in \ZZ$, the groups $K_{n}(S)^{(i)}$ vanish for all but finitely many $i\in\ZZ$ (cf. \cite[\S 2]{Soule}). Therefore, the morphism (\ref{eq.BG.2}) is an equivalence for all $r\leq0$ if and only if 
	$$
		\bigoplus_{r\in \ZZ}\Homline_{\DM(S)}(1_{S}\langle r\rangle,1_{S})\rightarrow \bigoplus_{r\in \ZZ}\Homline_{\DM(S)}(1_{S}\langle r\rangle,R\Gamma_{S}(G/T,\QQ)^{W})
	$$
	is an isomorphism. Equivalently, we can write this morphism as
	$$
		\bigoplus_{r\in \ZZ}\Homline_{\DM(S)}(1_{S},1_{S}\langle r\rangle)\rightarrow \bigoplus_{r\in \ZZ}\Homline_{\DM(S)}(M_{S}(G/T),1_{S}\langle r\rangle)^{W}
	$$
	The motive $M_{S}(G/T)$ is a direct sum of shifts and twists of the unit of $S$ and therefore compact (cf. \cite[Thm. 11.1.13]{CD1}).
	By compactness of $1_{S}$ and $M_{S}(G/T)$, the above morphism is an equivalence if and only if the morphism
	$$
		\Homline_{\DM(S)}(1_{S},\bigoplus_{r\in \ZZ}1_{S}\langle r\rangle)\rightarrow \Homline_{\DM(S)}(M_{S}(G/T),\bigoplus_{r\in \ZZ}1_{S}\langle r\rangle)^{W}
	$$
	is an equivalence. By construction of the rational K-theory spectrum in $\DM(S)$, denoted by $\KGL_{S,\QQ}$, we see that $\bigoplus_{r\in \ZZ} 1_{S}\langle r\rangle\simeq \KGL_{S,\QQ}$ (cf. \cite[Lem. 14.1.4]{CD1}). Thus, as $G/T$ is representable by a scheme (cf. \cite[Exp. IX Thm. 5.1]{SGA3}), the $n$-homotopy group of the right hand side is isomorphic to $K_{n}(G/T)_{\QQ}^{W}$, the rational higher $K$-theory of $G/T$. Further, the properties of the $K$-theory spectrum yield that the induced morphism $K_{n}(S)_{\QQ}\rightarrow K_{n}(G/T)_{\QQ}$ is given by the pullback along $G/T\rightarrow S$ (cf. \cite[\S 13.1]{CD1}). Taking $W$-invariants yields the map
	\begin{equation}
	\label{eq.final.K}
		K_{n}(S)_{\QQ}\rightarrow K_{n}(G/T)_{\QQ}^{W}
	\end{equation}
	on the $n$-th homotopy groups.
	We have to show that this map is an isomorphism for all $n\geq 0$. In the following we will freely identify $K_{n}(G/T)_{\QQ}$ with $K_{n}(G/B)_{\QQ}$ and $K$-theory with $G$-theory as we are only working with smooth schemes. As $G$ is split reductive, there exists a model $(G_{\ZZ},B_{\ZZ},T_{\ZZ})$ over $\Spec(\ZZ)$ such that $G/B$ is the pullback of the associated flag variety $G_{\ZZ}/B_{\ZZ}$. 
	As $G/B$ (resp.  $G_{\ZZ}/B_{\ZZ}$) is stratified by affine spaces, we have an equivalence of $K$-theory spectra $K(G/B)_{\QQ}\simeq K(G_{\ZZ}/B_{\ZZ})_{\QQ}\otimes_{K(\ZZ)_{\QQ}}K(S)_{\QQ}$ (cf. \cite[Cor. 4.3]{Joshua}), here $K(-)_{\QQ}$ denotes the underlying $E_{\infty}$-ring spectrum in $\DM(\ZZ)$, in particular $K(\ZZ)_{\QQ} = \KGL_{\ZZ,\QQ}$ and $K(S)_{\QQ}$ (resp. $K(G/B)_{\QQ}$) is just the $*$-pushforward of $\KGL_{S,\QQ}$ (resp. $\KGL_{G/B,\QQ}$) to $\DM(\ZZ)$. We will show that the induced morphism
	$$
		K(S)_{\QQ}\rightarrow K(G/B)_{\QQ}^{W}
	$$
	is an equivalence if and only if 
	$$
		K(\ZZ)_{\QQ}\rightarrow K(G_{\ZZ}/B_{\ZZ})_{\QQ}^{W}. 
	$$
	 This follows from the following equivalence, 
	$$
		(K(G_{\ZZ}/B_{\ZZ})_{\QQ}\otimes_{K(\ZZ)_{\QQ}}K(S)_{\QQ})^{W}\simeq K(G_{\ZZ}/B_{\ZZ})_{\QQ}^{W}\otimes_{K(\ZZ)_{\QQ}}K(S)_{\QQ},
	$$
	which we will show holds true.
 This is shown similarly to the analogous facts on representations of finite groups, but we will give an argument.\par
	For a scheme $X$ let $\DK(X)$ of $X$ denote the $\infty$-category of $KGL_{X,\QQ}$-modules in $\DM(X)$. This can also be extended to stacks by \'etale descent.\footnote{Note that the \'etale localized $K$-theory spectrum of stacks does not agree with the genuine rational $K$-theory spectrum. As we will only need the existence of a six functor formalism, we will just ignore the technicalities.} By abuse of notation will denote the tensor unit of $DK(X)$ with $1_{X}$ and just for the following will even drop the $X$ in the notation, as it is clear in which categories the units are. Also, we will again denote the inner-$\Hom$  by $\Homline$. Let us fix the following diagram with pullback squares
	$$
	\begin{tikzcd}
		G_{\ZZ}/B_{\ZZ}\arrow[r,""]\arrow[d,""]&\ZZ\arrow[d,"g"]&\arrow[l,"",swap]S\arrow[d,""]\\
		\quot{W\bs G_{\ZZ}/B_{\ZZ}}\arrow[r,"b"]&\quot{\ZZ/W}\arrow[d,"f"]& \quot{S/W}\arrow[l,"a'",swap]\arrow[d,""]\\
		&\ZZ&\arrow[l,"a"]S,
	\end{tikzcd}
	$$
	here $W$ acts on $\ZZ$ resp. $S$ trivially and $a, b$ are the natural projection. Note that by Lemma \ref{lem.torsor.fixed}, we have that ${g_{*}g^{*}}^{W}\simeq \id$. Using this, smooth base change, projection formula\footnote{Note that $f$ is proper, as $W$ is a finite constant group scheme.} and adjunctions, we get the following chain of equivalences
	\begin{align*}
		(K(G_{\ZZ}/B_{\ZZ})_{\QQ}\otimes_{K(\ZZ)_{\QQ}}K(S)_{\QQ})^{W}&\simeq\Homline_{\DK(\quot{W/\ZZ})}(1,g_{*}g^{*}(b_{*}1\otimes a'_{*}1))^{W}\\
		&\simeq\Homline_{\DK(\quot{W/\ZZ})}(1,b_{*}1\otimes a'_{*}1)\\
		&\simeq \Homline_{\DK(\ZZ)}(1,f_{*}(b_{*}1\otimes a'_{*}1))\\
		&\simeq K(G_{\ZZ}/B_{\ZZ})_{\QQ}^{W}\otimes_{K(\ZZ)_{\QQ}}K(S)_{\QQ}.
	\end{align*}
	Note that we also have to use that $K(G_{\ZZ}/B_{\ZZ})_{\QQ}$ is a direct sum of the unit, to see that the $\Homline$ agrees with the tensor product of spectra (see below for an explicit description).\par 
	Thus, as the structure map $G/B\rightarrow S$ is induced via pullback, we see that $K(S)_{\QQ}\rightarrow K(G/B)^{W}_{\QQ}$ is induced via 
	$$
		K(\ZZ)_{\QQ}\otimes_{K(\ZZ)_{\QQ}}K(S)_{\QQ}\xrightarrow{\alpha^{W}\otimes \id_{S}} K(G_{\ZZ}/B_{\ZZ})^{W}_{\QQ}\otimes_{K(\ZZ)_{\QQ}} K(S)_{\QQ},
	$$
	where $\alpha\colon G_{\ZZ}/B_{\ZZ}\rightarrow \Spec(\ZZ)$ denotes the structure map. In particular, we may assume that $S=\Spec(\ZZ)$.\par 
	As $G/B$ admits an affine cell decomposition, we see that 
	$$
		K(G_{\ZZ}/B_{\ZZ})_{\QQ}\simeq \bigoplus_{w\in W} K(\ZZ)_{\QQ},\textup{ and } K(G_{\QQ}/B_{\QQ})_{\QQ}\simeq \bigoplus_{w\in W} K(\QQ)_{\QQ},
	$$
	 where $G_{\QQ}/B_{\QQ}$ denotes the generic fiber of $G/B$ (cf. \cite{YayT}). It is well known that for any $n\neq 1$ there is an isomorphism $K_{n}(\ZZ)_{\QQ} \cong K_{n}(\QQ)_{\QQ}$ and $K_{1}(\ZZ)_{\QQ} =0$. Thus, it is clear that $K_{1}(\ZZ)_{\QQ}\rightarrow K_{1}(G/B)_{\QQ}^{W}$ is an isomorphism. For the other $K$-groups, we can use the identification of $K_{n}(\ZZ)_{\QQ}$ with $K_{n}(\QQ)_{\QQ}$ and may assume that $S=\Spec(\QQ)$. This concludes the proof via the argument in the Chow group case.
\end{proof}

Combining Proposition \ref{prop.1.BG} and the results of Section \ref{sec.fin} now yields the main result of this section. 

\begin{thm}
\label{thm.motive.BG}
Assume that $S$ is connected. Let $G$ be a reductive $S$-group scheme that admits a split maximal torus $T$ with Weyl group $W$. Assume $G$ acts on an $S$-scheme $X$ locally of finite type. Then we have
	$$
		R\Gamma_{S}(\quot{X/G},\QQ)\simeq R\Gamma_{S}(\quot{X/T},\QQ)^{W}.
	$$
\end{thm}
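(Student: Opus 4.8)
The plan is to insert the normalizer $N\coloneqq N_{G}(T)$ and to factor the structure map as $\quot{X/T}\xrightarrow{g}\quot{X/N}\xrightarrow{h}\quot{X/G}$, exactly as sketched at the beginning of Section~\ref{sec.beg.eq}, and then to treat $g$ and $h$ separately with the tools already developed. Because $S$ is connected, $N/T$ is the constant group scheme attached to the finite group $W$, so $g$ is a $W$-torsor; this is the only place where the connectedness hypothesis is used.

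First I would dispatch $g$. Applying Lemma~\ref{lem.torsor.fixed} to the $W$-torsor $g$ shows that the unit $\id\to(g_{*}g^{*})^{W}$ is an equivalence, hence $1_{\quot{X/N}}\simeq(g_{*}1_{\quot{X/T}})^{W}$. Pushing this forward along the structure map $\quot{X/N}\to S$ and using that the $*$-pushforward, being a right adjoint, commutes with the finite limit computing homotopy $W$-fixed points, I obtain
$$
	R\Gamma_{S}(\quot{X/N},\QQ)\simeq R\Gamma_{S}(\quot{X/T},\QQ)^{W},
$$
which is exactly the content of the example following Lemma~\ref{lem.torsor.fixed}. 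Next I would dispatch $h$, which is a $G/N$-bundle: Proposition~\ref{prop.1.BG} states precisely that the unit $1_{\quot{X/G}}\to h_{*}h^{*}1_{\quot{X/G}}\simeq h_{*}1_{\quot{X/N}}$ is an equivalence, so pushing forward to $S$ along the structure map of $\quot{X/G}$, and using that this map precomposed with $h$ is the structure map of $\quot{X/N}$, gives $R\Gamma_{S}(\quot{X/G},\QQ)\simeq R\Gamma_{S}(\quot{X/N},\QQ)$. Concatenating the two equivalences proves the theorem, and the Tateness statement recorded in the introduction follows from Remark~\ref{rem.invariants.Tate}, since $\DTM(S)$ is stable and hence closed under the finite limit defining $W$-invariants.

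Granting Proposition~\ref{prop.1.BG} --- whose proof, reducing the triviality of $R\Gamma_{S}(G/N,\QQ)$ to the classical isomorphism $K_{0}(S)_{\QQ}\xrightarrow{\sim}K_{0}(G/T)_{\QQ}^{W}$ via the Bruhat stratification and a descent first to $\Spec(\ZZ)$ and then to $\Spec(\QQ)$, is the genuinely hard input --- this last step is a short assembly. The only points left to verify are bookkeeping: that the factorization through $\quot{X/N}$ exists, that the $W$-action on $g_{*}1_{\quot{X/T}}$ is compatible with pushforward to $S$, and that the two displayed equivalences are natural enough to be composed. I do not anticipate any obstacle here; all the real weight of the theorem lies in Proposition~\ref{prop.1.BG} and Lemma~\ref{lem.torsor.fixed}.
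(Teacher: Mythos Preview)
Your proposal is correct and follows essentially the same route as the paper: factor through the normalizer $N=N_{G}(T)$, apply Lemma~\ref{lem.torsor.fixed} to the $W$-torsor $\quot{X/T}\to\quot{X/N}$, apply Proposition~\ref{prop.1.BG} to the $G/N$-bundle $\quot{X/N}\to\quot{X/G}$, and concatenate. The paper's proof is a one-line chain of equivalences doing exactly this; your write-up is a slightly more expanded version of the same argument.
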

\begin{proof}
Let $N$ be the normalizer of $T$ in $G$. We can factor the map in the theorem as $\quot{X/T}\xrightarrow{f} \quot{X/N}\xrightarrow{g} \quot{X/G}$.  We now apply Lemma \ref{lem.torsor.fixed} and Proposition \ref{prop.1.BG} and get
	 \begin{align*}
	 	(p_{\quot{X/T}*}1_{\quot{X/T}})^{W}\simeq (p_{\quot{X/G}*}g_{*}f_{*}1_{\quot{X/T}})^{W}&\simeq  p_{\quot{X/G}*}g_{*}(f_{*}1_{\quot{X/T}})^{W}\\ &\simeq p_{\quot{X/G}}g_{*}1_{\quot{X/N}} \simeq p_{\quot{X/G}*}1_{\quot{X/G}}.
	 \end{align*}
\end{proof}

Let $S=\Spec(k)$ be the spectrum of a field and $G$ a split reductive group over $k$ with split maximal torus $T$ and Weyl group $W$. Edidin and Graham have shown in \cite{EG} that for any smooth scheme $X$ with $G$-action, we have $A^{\bullet}_{G}(X)\cong A^{\bullet}_{T}(X)^{W}$. Later on Krishna has shown in \cite[Cor. 8.7]{KriC} that this result can be generalized to \textit{higher} Chow groups, at least when $X$ is quasi-projective. Our Theorem \ref{thm.motive.BG} is the motivic analog of both of these results. In fact, we will see in the next corollary, that applying Theorem \ref{thm.motive.BG} to motivic cohomology recovers both Edidin-Graham's and Krishna's result. Moreover, as any reductive group over a field splits after passage to a finite Galois extension, we can use the results of Section \ref{sec.fin} to extend this comparison of (higher) equivariant Chow groups to \textit{arbitrary} reductive groups over fields. We also note that Levine showed that Bloch's higher Chow groups also satisfy a localization sequence, even in the non quasi-projective case. The following corollary summarizes this discussion.

\begin{cor}
\label{cor.chow.bg}
Assume that $S=\Spec(k)$ is the spectrum of a field.
	Let $G$ be a reductive $S$-group scheme with maximal torus $T$. Let $W$ denote the absolute Weyl\footnote{Let $\kbar$ be an algebraic closure of $k$, then $\Wcal\coloneqq N_{G}(T)/T$ is a finite \'etale group scheme of $S$ and we set the absolute Weyl group to be $W\coloneqq \Wcal(\kbar)$} group of $T$ in $G$. Assume $G$ acts on a smooth finite type $S$-scheme $X$. Then we have 
	$$R\Gamma_{S}(\quot{X/G},\QQ)\simeq R\Gamma_{S}(\quot{X/T},\QQ)^{W}\coloneqq \colim_{K/k\textup{ finite Galois}} (R\Gamma_{S}(\quot{X_{K}/T_{K}},\QQ)^{W_{K}})^{\Gal(K/k)}.$$
	In particular, applying this result to motivic cohomology yields for all $n,m\in \ZZ$
	$$
		A^{n}_{G}(X,m)_{\QQ}\cong A^{n}_{T}(X,m)_{\QQ}^{W}.
	$$
\end{cor}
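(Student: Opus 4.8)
The plan is to reduce to the split case of Theorem \ref{thm.motive.BG} by Galois descent. Since $T$ is a torus over the field $k$, it splits over a finite separable extension, and after passing to its Galois closure I fix a finite Galois extension $K_{0}/k$ over which $T$ becomes split. For any finite Galois $K/k$ containing $K_{0}$, the finite \'etale $k$-group scheme $\Wcal=N_{G}(T)/T$ is already constant over $K_{0}$, so $W_{K}\coloneqq\Wcal(K)=\Wcal(\kbar)=W$; since moreover $\Spec K$ is connected, Theorem \ref{thm.motive.BG} applies verbatim over $K$ and gives $R\Gamma_{K}(\quot{X_{K}/G_{K}},\QQ)\simeq R\Gamma_{K}(\quot{X_{K}/T_{K}},\QQ)^{W}$.

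Next I would transport this down to $k$ and take Galois invariants. Writing $q_{K}\colon\Spec K\to\Spec k$, the functor $q_{K*}$ is a right adjoint, hence commutes with the finite limit computing $(-)^{W}$; since the structure maps of $\quot{X_{K}/G_{K}}$ and $\quot{X_{K}/T_{K}}$ to $\Spec k$ factor through $\Spec K$, this transports the previous equivalence to $R\Gamma_{k}(\quot{X_{K}/G_{K}},\QQ)\simeq R\Gamma_{k}(\quot{X_{K}/T_{K}},\QQ)^{W}$. On the other hand $\pi_{K}\colon\quot{X_{K}/G_{K}}\to\quot{X/G}$ is the base change of the $\Gal(K/k)$-torsor $\Spec K\to\Spec k$, hence is itself a $\Gal(K/k)$-torsor, so Lemma \ref{lem.torsor.fixed} together with pushforward along the limit-preserving structure morphism $\quot{X/G}\to\Spec k$ yields $R\Gamma_{k}(\quot{X/G},\QQ)\simeq R\Gamma_{k}(\quot{X_{K}/G_{K}},\QQ)^{\Gal(K/k)}$. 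Combining the two, for every finite Galois $K\supseteq K_{0}$ one obtains $R\Gamma_{k}(\quot{X/G},\QQ)\simeq\bigl(R\Gamma_{k}(\quot{X_{K}/T_{K}},\QQ)^{W}\bigr)^{\Gal(K/k)}$.

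To conclude the first equivalence of the corollary I would observe that the extensions containing $K_{0}$ are cofinal among all finite Galois extensions of $k$, and that for $K_{0}\subseteq K\subseteq K'$ the transition map in the colimit, run through the identifications just made --- which are natural in $K$, being induced by the canonical map $\quot{X/T}\to\quot{X/G}$ and by Galois descent along $\quot{X_{K'}/G_{K'}}\to\quot{X_{K}/G_{K}}$ --- becomes the identity on $R\Gamma_{k}(\quot{X/G},\QQ)$; hence the colimit defining $R\Gamma_{S}(\quot{X/T},\QQ)^{W}$ equals $R\Gamma_{S}(\quot{X/G},\QQ)$. For the statement on higher Chow groups I would then use that $X$ smooth over $k$ forces $\quot{X/G}$ and the $\quot{X_{K}/T_{K}}$ to be smooth over $k$, so that $R\Gamma_{k}$ computes equivariant motivic cohomology, which agrees with Bloch's higher Chow groups by \cite{CD1}; applying the functor $M\mapsto\Hom_{\DM(k)}(1_{k},M(n)[2n-m])$ to the equivalence above and using that it --- together with Tate twists, shifts and $q_{K*}$ --- commutes with the finite limits defining $(-)^{W}$ and $(-)^{\Gal(K/k)}$ produces $A^{n}_{G}(X,m)_{\QQ}\cong\bigl(A^{n}_{T_{K}}(X_{K},m)_{\QQ}^{W}\bigr)^{\Gal(K/k)}$, which is exactly the definition of $A^{n}_{T}(X,m)_{\QQ}^{W}$ in the non-split case (and specializes to the results of Edidin--Graham and Krishna when $G$ and $T$ are split).

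I expect essentially no genuine difficulty beyond what is already in Theorem \ref{thm.motive.BG} and Proposition \ref{prop.1.BG}: the only points that need care are (i) identifying $W_{K}$ with the absolute Weyl group $W$ over a splitting field and checking that Galois descent (Lemma \ref{lem.torsor.fixed}) legitimately applies to $\quot{X_{K}/G_{K}}\to\quot{X/G}$ --- which is precisely where rational coefficients are indispensable, as the statement fails integrally --- and (ii) verifying that the colimit over splitting fields is essentially constant, so that it may be evaluated at any single splitting field. Both are routine bookkeeping rather than real obstacles.
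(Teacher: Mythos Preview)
Your proof is correct and follows essentially the same strategy as the paper: pass to a finite Galois extension splitting $T$, apply Theorem \ref{thm.motive.BG} there, and descend via Lemma \ref{lem.torsor.fixed}, with the cofinality argument making the colimit constant. The only minor correction is that the identification of motivic cohomology of smooth quotient stacks with equivariant higher Chow groups is \cite[Thm.~2.2.10]{RS1} (extending Edidin--Graham), not \cite{CD1}, which treats only schemes.
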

\begin{proof}
Any reductive group over $k$ becomes split after passing to a finite Galois extension $K/k$ (cf. \cite[Exp. XXII Cor. 2.4]{SGA3}). Thus, let $K$ be such an extension, so that $T_{K}$ is a split maximal torus. Then we have the following diagram with pullback squares
$$
	\begin{tikzcd}
		\quot{X_{K}/T_{K}}\arrow[r,""]\arrow[d,"f"]\arrow[rr,"p_{T_{K}}", bend left = 2em]& \quot{X_{K}/G_{K}}\arrow[d,"g"]\arrow[r,"p_{G_{K}}"] &\Spec(K)\arrow[d,"p_{K}"]\\
		\quot{X/T}\arrow[r,""]\arrow[rr,"p_{T}", bend right = 2em]&\quot{X/G}\arrow[r,"p_{G}"]& \Spec(k).
	\end{tikzcd}
$$
As $K/k$ is a finite Galois extension, the morphisms $p_{K}$ and thus also $f$ are $\Gamma\coloneqq \Gal(K/k)$-torsors. Then Lemma \ref{lem.torsor.fixed} yields the equivalence 
$$p_{G*} 1_{\quot{X/G}} \simeq (p_{K*}p_{K}^{*}p_{G*} 1_{\quot{X/G}})^{\Gamma}.$$
As the diagram above has cartesian squares, we can use smooth base change, to see that $p_{K}^{*}p_{G*}\simeq p_{G_{K}*}g^{*}$. But by Theorem \ref{thm.motive.BG}, we have
$$
p_{G_{K}*}g^{*}1_{\quot{X/G}}\simeq R\Gamma_{K}(\quot{X_{K}/G_{K}},\QQ)\simeq R\Gamma_{K}(\quot{X_{K}/T_{K}},\QQ)^{W_{K}}.
$$
Commutativity of the above diagram yields $p_{K*}R\Gamma_{K}(\quot{X_{K}/T_{K}},\QQ)\simeq p_{T*}f_{*}1_{\quot{X_{K}/T_{K}}}.$ Thus, we have 
$$
	p_{G*}1_{\quot{X/G}}\simeq ((p_{T*}f_{*}1_{\quot{X_{K}/T_{K}}})^{W_{K}})^{\Gamma}.
$$ This equivalence is compatible with passing to finite Galois extensions $K'/k$ containing $K$. This yields the first equivalence.
\par
The result about motivic cohomology follows from Remark \ref{rem.fix} by using \cite[Thm. 2.2.10]{RS1} which proves that motivic cohomology for smooth quotient stacks is computed by the higher equivariant Chow groups of Edidin-Graham.\footnote{In \textit{loc.cit.} they assume some properties on the groups and on $X$, as Edidin-Graham need these assumptions to compare higher Chow theory of stacks and equivariant higher Chow theory \cite[Prop. 13 (b)]{EG}. The assumptions in \textit{loc.cit.} are needed as Bloch only shows the existence of a long exact sequence for higher Chow groups in the case where $X$ is \textit{quasi-projective}. This result was extended by Levine to all \textit{finite type} $k$-schemes (cf. \cite{Levine2}). Thus, the comparison of Edidin-Graham and hence also of Richarz-Scholbach go through in the case of the corollary.}
\end{proof}

\section{Torsors under tori}
\label{sec.T-tors}
Let us fix a split torus $T$ over $S$ (cf. \cite[Exp. IX]{SGA3}).\par
In this subsection, we want to understand the motivic homotopy theory of torsors under $T$. More precisely, let us consider the following situation. Let $X\rightarrow Y$ be a $T$-torsors of Artin $S$-stacks locally of finite type. We want to understand the relation between $M_{S}(X)$ and $M_{S}(Y)$.\par 
Let us recall the classical case of Chow theory. For this paragraph let us assume that $X$ and $Y$ are smooth Artin stacks over $\Spec(k)$, where $k$ is a field. Then the group $\That$ of characters of $T$ acts on $A^{\bullet}(Y)$ in the following way. Let $\chi\in\That$ be a character and consider its associated $1$-dimensional representation $\kappa(\chi)$. The quotient $L_{\chi}\coloneqq X\times^{T}\kappa(\chi)$ is representable by a line bundle over $Y$. Multiplication with the first Chern class of $L_{\chi}$ yields an action of $\That$ on $A^{\bullet}(Y)$. It is known that in this case, we have
$$
	A^{\bullet}(X)\cong A^{\bullet}(Y)/\That A^{\bullet}(Y).
$$
\par
Our goal is to extend this result to motivic homotopy theory, so that it generalizes the result about Chow theory and yields a similar statement for $K$-theory. Even though we work with Beilinson motives, we will remark in the end how to extend this to integral $K$-theory under some assumptions on $X$ and $Y$.

\subsection{The motive of $T$-torsors}

Let us denote the character group of $T$ with $\That$. Let $X\rightarrow Y$ be a $T$-torsor of Artin stacks locally of finite type over $S$ and $\chi\in\That$ a character. Let $\GG_{\textup{m},S}$ act via left multiplication in $\AA^{1}_{S}$. Then $T$ acts via $\chi$ on $\AA^{1}_{S}$ and thus, $X\times^{T}\AA^{1}_{S}\rightarrow X/T\cong Y$ yields a line bundle over $Y$. The action of the first Chern class of $X\times^{T}\AA^{1}_{S}$ on the motivic cohomology will be described by a Gysin sequence (cf. Proposition \ref{prop.T-tors}).

\begin{notation}
\label{not.char}
	In the following we want to split up the $T$-torsor $f\colon X\rightarrow Y$ into a sequence of $\GG_{\textup{m},S}$-torsors. Note that by splitness $T\cong \GG_{\textup{m},S}^{r}$ for some $r\in \NN$. Fixing a numbering of the $\GG_{\textup{m},S}$-components of $T$, we can embed for any $1\leq k\leq r$ the product $\GG_{\textup{m},S}^{k}$ into $T$ by $\id_{\GG_{\textup{m},S}^{k}}\times 1^{r-k}$. Then $\GG_{\textup{m},S}^{k}$ acts on $X$ via this embedding. We get a sequence 
	$$
		X\rightarrow X/\GG_{\textup{m},S}\rightarrow X/\GG_{\textup{m},S}^{2}\rightarrow \dots \rightarrow X/T\cong Y
	$$
	of $\GG_{\textup{m},S}$-torsors. We denote the induced maps $X/\GG_{\textup{m},S}^{i}\rightarrow Y$ with $f_{i}$.
\end{notation}

To ease notation, we will denote the $!$-push/pull of the unit along a map of Artin stacks $f\colon X\rightarrow Y$ by $M_{Y}(X)\coloneqq f_{!}f^{!}1_{Y}$.

\begin{prop}
\label{prop.T-tors}
	Assume that $T\cong \GG_{\textup{m},S}^{r}$ for some $r\in \NN$. Let $X\rightarrow Y$ be a $T$-torsor of smooth Artin stacks over $S$. Then, there exists a filtration
	$$
		M_{Y}(X)=M_{0}\rightarrow M_{1}\rightarrow\dots\rightarrow M_{r} = 1_{Y}
	$$ 
	in $\DM(S)$, where $M_{i}\coloneqq M_{Y}(X/\GG_{\textup{m},S}^{i})$ such that the cofiber of $M_{Y}(X_{i-1})\rightarrow  M_{Y}(X_{i})$ is given by $M_{Y}(X_{i})\langle 1\rangle$ and the map $ M_{Y}(X_{i})\rightarrow  M_{Y}(X_{i})\langle 1\rangle$ is induced by multiplication with $c_{1}(X_{i-1}\times^{\GG_{\textup{m}}}\AA^{1}_{S})$.
\end{prop}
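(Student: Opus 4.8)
The plan is to reduce to the case $r=1$ via the tower of $\GG_{\textup{m},S}$-torsors from Notation \ref{not.char} and then establish the fundamental fibre sequence for a single $\GG_{\textup{m},S}$-torsor using the Gysin sequence for Artin stacks (Lemma \ref{Gysin for stacks}). For the reduction step: each stage $X_{i-1} = X/\GG_{\textup{m},S}^{i-1} \to X_{i} = X/\GG_{\textup{m},S}^{i}$ is a $\GG_{\textup{m},S}$-torsor of smooth Artin stacks over $S$ (smoothness of each $X_{i}$ follows since $\GG_{\textup{m},S}^{i}$ acts on the smooth stack $X$ and the quotient in the \'etale topology of a smooth stack by a smooth group is smooth), so it suffices to treat one $\GG_{\textup{m},S}$-torsor $g\colon Z \to W$ of smooth Artin $S$-stacks, produce a cofibre sequence $M_{S}(Z) \to M_{S}(W) \to M_{S}(W)\langle 1\rangle$ with the third map given by multiplication with $c_{1}$ of the associated line bundle, and then splice these sequences together along the tower to obtain the filtration $M_{0}\to M_{1}\to\dots\to M_{r}$. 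Note $M_{r}=M_{S}(Y/\,\cdot\,)=M_{S}(Y)$ over $S$ should read as $M_{Y}(Y)=1_{Y}$ after $f_{r}=\id$; I will keep the bookkeeping in terms of $M_{Y}(-)=f_{i!}f_{i}^{!}1_{Y}$ as in the statement, using that $f_{i} = f_{i+1}\circ(\text{torsor map})$ and compatibility of $!$-pushforward with composition.

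For the single $\GG_{\textup{m},S}$-torsor $g\colon Z \to W$: let $\Lcal \coloneqq Z\times^{\GG_{\textup{m},S}}\AA^{1}_{S}$ be the associated line bundle on $W$, with zero section $s\colon W \hookrightarrow \Lcal$ and complement $j\colon \Lcal^{\times}\hookrightarrow \Lcal$. The key geometric observation is that $\Lcal^{\times} = Z\times^{\GG_{\textup{m},S}}\GG_{\textup{m},S} \cong Z$, i.e. removing the zero section recovers the total space of the torsor, and that $\Lcal \to W$ is an $\AA^{1}$-bundle, so $M_{W}(\Lcal)\simeq 1_{W}$ by homotopy invariance (Remark \ref{rem.6.ff}(vi), applied after the fact that $\Lcal\to W$ is a vector bundle of rank one, hence its $!$-pushforward of the unit is the unit up to the appropriate twist — but here we want the relative motive over $W$, which is trivial). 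The zero section $s$ is a closed immersion of smooth Artin $S$-stacks of pure codimension $1$, with $W$ smooth over $W$ (trivially) and $\Lcal$ smooth over $W$; applying Lemma \ref{Gysin for stacks} with $\Yfr = W$, $\Xfr = \Lcal$, $\Zfr = W$ (the zero section), $\Ufr = \Lcal^{\times}\cong Z$, $n=1$, gives the fibre sequence
$$
	M_{W}(\Lcal^{\times}) \to M_{W}(\Lcal) \to M_{W}(W)\langle 1\rangle,
$$
i.e. $M_{W}(Z) \to 1_{W} \to 1_{W}\langle 1\rangle$. Pushing this forward to $S$ along the structure map of $W$ (which preserves fibre sequences) and, in the relative-over-$Y$ formulation, composing with the remaining quotient maps, yields the desired $M_{Y}(X_{i-1}) \to M_{Y}(X_{i}) \to M_{Y}(X_{i})\langle 1\rangle$.

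The remaining point — and the one I expect to be the main obstacle — is the identification of the boundary map $1_{W}\to 1_{W}\langle 1\rangle$ (equivalently $M_{Y}(X_{i})\to M_{Y}(X_{i})\langle 1\rangle$) with multiplication by the first Chern class $c_{1}(\Lcal)$. This is the standard fact that the connecting map in the Gysin triangle of a zero section of a line bundle is cap/cup with the Euler class, which is $c_{1}$ in codimension one; for schemes it is part of the oriented-theory formalism in \cite{CD1} (the projective bundle formula and the construction of Chern classes via the Gysin sequence for $\PP^{1}$-bundles, or directly \cite[\S 11]{CD1}). To transport it to Artin stacks I would use the same \v Cech-nerve descent argument as in the proof of Lemma \ref{Gysin for stacks}: choose a smooth atlas $W_{0}\to W$, observe that $c_{1}(\Lcal)$ is compatible with $*$-pullback to the \v Cech nerve $\Cv(W_{0})_{\bullet}$ and that the Gysin boundary maps are computed levelwise, then invoke the scheme-level statement on each $\Cv(W_{0})_{n}$ and pass to the limit over $\Delta$, using that both sides of the comparison are simplicial maps of the evident diagrams. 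A mild care point is functoriality/naturality of the first Chern class and of the Gysin sequence under the smooth pullbacks in the nerve, but this is formal given the $6$-functor formalism and the compatibility of relative purity with base change already used in Lemma \ref{Gysin for stacks}.
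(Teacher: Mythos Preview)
Your proposal is correct and follows essentially the same route as the paper: form the associated line bundle $\Lcal_{i}=X_{i-1}\times^{\GG_{\textup{m},S}}\AA^{1}_{S}$ over $X_{i}$, observe that the complement of the zero section is $X_{i-1}$, and apply the Gysin sequence (Lemma~\ref{Gysin for stacks}) to obtain the required cofibre sequence. The only difference is cosmetic: the paper applies Gysin directly relative to $Y$ (so the fibre sequence lands in $\DM(Y)$ without a further pushforward), and it simply asserts that the boundary map is multiplication by $c_{1}(\Lcal_{i})$ ``by construction of the Gysin sequence'' (citing \cite[Prop.~2.32]{HosLeh}), whereas you spell out a descent argument to reduce that identification to the scheme case---a reasonable elaboration rather than a different idea.
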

\begin{proof}
	This follows by successively using \cite[Prop. 2.32]{HosLeh}. But for completion let us give a proof by recalling the argument.\par 
	 The morphism $X_{i-1}\rightarrow X_{i}$ is a $\GG_{m}$-torsor. In particular, the scheme $\Lcal_{i}\coloneqq X_{i-1}\times_{S}^{\GG_{\textup{m},S}} \AA^{1}_{S}$ is a line bundle over $X_{i}$. Let $s\colon X_{i}\hookrightarrow V_{i}$ denote the zero section. Certainly, the complement of the closed immersion $s$ is isomorphic to $X_{i-1}$.
	  Then the Gysin sequence of Lemma \ref{Gysin for stacks} yields a fibre sequence
	 $$
	 	M_{Y}(X_{i-1})\rightarrow M_{Y}(V_{i})\simeq M_{Y}(X_{i})\xrightarrow{\varphi} M_{Y}(X_{i})\langle 1\rangle .
	 $$
	By construction of the Gysin sequence $\varphi$ is given by multiplication with $c_{1}(\Lcal_{i})$. This concludes the proof.
\end{proof}

\begin{cor}
\label{cor.T-tors.tate}
	Let $f\colon X\rightarrow Y$ be a $T$-torsor of smooth Artin stacks over $S$. Then $f$ is a Tate map.
\end{cor}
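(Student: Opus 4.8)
The plan is to read the statement off directly from Proposition \ref{prop.T-tors}. Recall that by definition $f$ is a Tate map precisely when $f_{*}1_{X}\in\DTM(Y)$. Since $X$ and $Y$ are smooth over $S$ and $f$, being a torsor under the smooth group $T$, is itself smooth, Remark \ref{rem.tate} reduces this to showing that $f_{!}1_{X}$ is Tate. Writing $r$ for the rank of the split torus $T$, so that $T\cong\GG_{\textup{m},S}^{r}$, relative purity identifies $f^{!}1_{Y}$ with $1_{X}\langle r\rangle$; hence $M_{Y}(X)=f_{!}f^{!}1_{Y}\simeq (f_{!}1_{X})\langle r\rangle$, and since $\DTM(Y)$ is closed under Tate twists it will be enough to prove that $M_{Y}(X)\in\DTM(Y)$.

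For this I would invoke Proposition \ref{prop.T-tors}, which provides a finite filtration $M_{Y}(X)=M_{0}\to M_{1}\to\dots\to M_{r}=1_{Y}$, read in $\DM(Y)$, together with fibre sequences $M_{i-1}\to M_{i}\to M_{i}\langle 1\rangle$ for each $1\le i\le r$. A descending induction on $i$ then does the job: $M_{r}=1_{Y}$ is Tate by definition, and if $M_{i}\in\DTM(Y)$ then $M_{i}\langle 1\rangle\in\DTM(Y)$ as well, so $M_{i-1}$, being the fibre of the map $M_{i}\to M_{i}\langle 1\rangle$, lies in $\DTM(Y)$ because the latter is a stable subcategory of $\DM(Y)$ and hence closed under fibres. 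Taking $i=0$ gives $M_{Y}(X)\in\DTM(Y)$, and tracing back through the reductions above shows that $f_{*}1_{X}$ is Tate, i.e.\ $f$ is a Tate map.

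Essentially all the real content has already been extracted in Proposition \ref{prop.T-tors}; what remains is formal. The only steps where I would be slightly careful are the bookkeeping that moves between $f_{*}1_{X}$, $f_{!}1_{X}$ and $M_{Y}(X)=f_{!}f^{!}1_{Y}$ via relative purity and Remark \ref{rem.tate}, and the observation that the filtration of Proposition \ref{prop.T-tors} together with its cofibre sequences can be read in $\DM(Y)$ and not merely after pushing forward along $Y\to S$, since all the objects and maps involved are already defined over $Y$. I do not expect a genuine obstacle beyond this.
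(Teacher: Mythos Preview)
Your proposal is correct and follows essentially the same approach as the paper: the paper's proof simply says that Proposition~\ref{prop.T-tors} exhibits $M_{Y}(X)$ as a successive extension of Tate twists of $1_{Y}$, hence Tate, and then invokes Remark~\ref{rem.tate} to pass from $M_{Y}(X)$ back to $f_{*}1_{X}$. You have just unpacked the same two steps in more detail, including the relative purity bookkeeping and the explicit descending induction, and your observation that the filtration lives in $\DM(Y)$ rather than merely $\DM(S)$ is well taken.
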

\begin{proof}
	 Proposition \ref{prop.T-tors} implies that $M_{Y}(X)$ is a successive extension of $1_{Y}$. Thus, the result follows from Remark \ref{rem.tate}.
\end{proof}

\subsection{Motivic cohomology of $T$-torsors}
 Let us fix an $r\in \NN$ such that $T\cong \GG_{\textup{m},S}^{r}$.\par
For any Artin stack $X$ locally of finite type over $S$, we will denote its motivic cohomology with
$$
	H^{p}(X,\QQ(n))\coloneqq \Hom_{\DM(S)}(M_{S}(X),\QQ(n)[p]).
$$
If $X$ is representable by a smooth $S$-scheme, then we have $H^{p}(X,\QQ(n))\cong K_{2n-p}(X)^{(n)}$.
If $S=\Spec(k)$ is the spectrum of a field and $X$ is smooth and over $S$, we have $H^{p}(X,\QQ(n)) \cong A^{n}(X,2n-p)$. Note that for a smooth Artin $S$-stack $X$ the motivic cohomology vanishes automatically in certain degrees by descent and vanishing of negative $K$-theory for regular schemes, i.e. we have that $H^{p}(X,\QQ(n))\cong 0$ for $p>2n$. 

\begin{notation}
	Consider a $\chi\in \That$. The associated line bundle $L_{\chi}\coloneqq X\times_{S}^{T} \VV(\Ecal_{\chi})$ yields a map $H^{p+2}(Y,\QQ(n+1))\rightarrow  H^{p}(Y,\QQ(n))$, by multiplication with the Chern class of $L_{\chi}$. We denote the image of this map with $c_{1}(L_{\chi})H^{p}(Y,\QQ(n))$. 
\end{notation}

\begin{prop}
\label{prop.T-tors.coho}
	In the setting of Proposition \ref{prop.T-tors} let us further fix an $n\in \ZZ$. Then, we have
	$$
		H^{2n}(X,\QQ(n)) \cong H^{2n}(Y,\QQ(n))/\That H^{2n}(Y,\QQ(n)).
	$$
\end{prop}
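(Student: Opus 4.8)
The plan is to induct on $r$, using the filtration from Proposition \ref{prop.T-tors} together with the long exact sequence it produces on motivic cohomology. First I would handle the base case $r=1$, i.e. a single $\GG_{\textup{m},S}$-torsor $f_{1}\colon X\to Y$ with associated line bundle $\Lcal = X\times^{\GG_{\textup{m},S}}\AA^{1}_{S}$. Proposition \ref{prop.T-tors} gives a cofiber sequence $M_{Y}(X)\to 1_{Y}\xrightarrow{c_{1}(\Lcal)\cdot} 1_{Y}\langle 1\rangle$ in $\DM(S)$, hence after applying $\Hom_{\DM(S)}(-,\QQ(n)[2n])$ (and using $\langle 1\rangle = (1)[2]$, so that $1_Y\langle 1\rangle$ contributes $H^{2n-2}(Y,\QQ(n-1))$) a long exact sequence
\begin{equation*}
H^{2n-2}(Y,\QQ(n-1))\xrightarrow{\,\cup c_{1}(\Lcal)\,} H^{2n}(Y,\QQ(n))\to H^{2n}(X,\QQ(n))\to H^{2n-1}(Y,\QQ(n-1)).
\end{equation*}
The point is that the last term vanishes: by the remark preceding the proposition, $H^{p}(Y,\QQ(n-1))\cong 0$ for $p>2(n-1)=2n-2$, and $2n-1>2n-2$. (One has to be slightly careful that $Y$ is a smooth Artin stack over $S$, so this vanishing is genuinely available; it follows from descent and vanishing of negative $K$-theory of regular schemes, exactly as stated in the text.) Therefore the sequence collapses to $H^{2n}(X,\QQ(n))\cong \operatorname{coker}\bigl(H^{2n-2}(Y,\QQ(n-1))\xrightarrow{\cup c_{1}(\Lcal)} H^{2n}(Y,\QQ(n))\bigr)$, and the image of this map is by definition $c_{1}(\Lcal)\,H^{2n}(Y,\QQ(n))$, which is precisely the submodule generated by the single character $\chi$ corresponding to $\Lcal$.

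For the inductive step I would factor the $T$-torsor as $X\to X_{r-1}=X/\GG_{\textup{m},S}^{\,r-1}\to Y$, where the first map is a $\GG_{\textup{m},S}^{\,r-1}$-torsor and the second is a single $\GG_{\textup{m},S}$-torsor. Applying the case $r=1$ to $X_{r-1}\to Y$ gives $H^{2n}(X_{r-1},\QQ(n))\cong H^{2n}(Y,\QQ(n))/\chi_{r}H^{2n}(Y,\QQ(n))$ for the last character $\chi_{r}$, and applying the inductive hypothesis to $X\to X_{r-1}$ (noting that the relevant characters of $\GG_{\textup{m},S}^{\,r-1}$ are the images of $\chi_{1},\dots,\chi_{r-1}$, and that the line bundles $L_{\chi_i}$ on $X_{r-1}$ pull back/descend compatibly from those on $Y$) gives $H^{2n}(X,\QQ(n))\cong H^{2n}(X_{r-1},\QQ(n))/(\chi_{1},\dots,\chi_{r-1})$. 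Composing the two quotients yields $H^{2n}(X,\QQ(n))\cong H^{2n}(Y,\QQ(n))/(\chi_{1},\dots,\chi_{r})H^{2n}(Y,\QQ(n))=H^{2n}(Y,\QQ(n))/\That H^{2n}(Y,\QQ(n))$, since $\chi_{1},\dots,\chi_{r}$ generate $\That$.

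The main obstacle I anticipate is bookkeeping rather than anything deep: one must check that the $\That$-action on $H^{2n}(Y,\QQ(n))$ via Chern classes is compatible with the successive-quotient description, i.e. that at the $i$-th stage the relevant operator on $H^{2n}(X_{i},\QQ(n))$ really is cup product with $c_{1}$ of the line bundle attached to $\chi_{i}$, and that these operators all descend from operators on $H^{2n}(Y,\QQ(n))$ so that the iterated quotient is genuinely the quotient by the ideal they generate. This is exactly what the last clause of Proposition \ref{prop.T-tors} provides (the connecting map is multiplication by $c_{1}(X_{i-1}\times^{\GG_{\textup{m}}}\AA^{1}_{S})$), combined with functoriality of Chern classes under the pullbacks $X_{i}\to Y$; I would spell this out but not belabor it. A minor secondary point worth a sentence is to confirm the degree shift: $M_{Y}(X_i)\langle 1\rangle$ forces $\Hom_{\DM(S)}(-,\QQ(n)[2n])$ to pair it with $\QQ(n-1)[2n-2]$-cohomology, which is what makes the vanishing range $p>2(n-1)$ the operative one and kills the error term; getting this index right is the only place an off-by-one could creep in.
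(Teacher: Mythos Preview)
Your proof is correct and follows essentially the same route as the paper: use the filtration of Proposition~\ref{prop.T-tors}, apply $\Hom(-,\QQ(n)[2n])$ to each cofiber sequence, kill the error term by the vanishing $H^{p}(-,\QQ(m))=0$ for $p>2m$, and then iterate. Your indexing $H^{2n-2}(Y,\QQ(n-1))\to H^{2n}(Y,\QQ(n))\to H^{2n}(X,\QQ(n))\to H^{2n-1}(Y,\QQ(n-1))$ is the one that matches the cofiber of Proposition~\ref{prop.T-tors} as written (the paper's own proof writes the twist the other way, but the vanishing argument is identical). The only organizational difference is that you run a formal induction on $r$ by splitting off the last $\GG_{\textup{m},S}$, whereas the paper walks down the whole tower in one pass; the content is the same, including the bookkeeping step identifying $\Lcal_{i}$ with the pullback of $L_{\chi_{i}}$ along $X_{i}\to Y$, which you correctly flag as the point that needs a word.
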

\begin{proof}
First let us note that it is enough\footnote{Any character is generated by primitive characters and the corresponding $1$-dimensional representation is given by the associated tensor product.} to show that 
	$$
		H^{2n}(X,\QQ(n)) = H^{2n}(Y,\QQ(n))/\langle c_{1}(X\times_{S}^{T}\VV(\Ecal_{\chi_{i}}))H^{2n}(Y,\QQ(n))\rangle ,
	$$
	where $\chi_{i}$ is a primitive character in $\That$.
	Let $X=X_{0}\rightarrow X_{1}\rightarrow\dots\rightarrow X_{r} = Y$ be the sequence of Proposition \ref{prop.T-tors}. For each $0\leq i\leq r$ this yields a long exact sequence on motivic cohomology
	\begin{align*}
		\dots\rightarrow  H^{2n+2}(X_{i},\QQ(n+1))&\rightarrow H^{2n}(X_{i},\QQ(n))\\ &\rightarrow H^{2n}(X_{i-1},\QQ(n))\rightarrow H^{2n+3}(X_{i},\QQ(n+1))\rightarrow \dots .
	\end{align*}
	We have $H^{2n+3}(X_{i},\QQ(n+1))= 0$ and thus get an exact sequence of the form
	$$
		H^{2n+2}(X_{i},\QQ(n+1))\xrightarrow{a} H^{2n}(X_{i},\QQ(n))\xrightarrow{b} H^{2n}(X_{i-1},\QQ(n))\rightarrow 0.
	$$ 
	The map $b$ is the usual pullback on motivic cohomology. The map $a$ is induced by multiplication with the Chern class of the line bundle $\Lcal_{i} = X_{i-1}\times^{\GG_{\textup{m}}}_{S}V(\Ecal_{\chi_{i}})$. As $X_{r} = Y$, we have $H^{2n}(X_{r-1},\QQ(n)) \cong H^{2n}(Y,\QQ(n))/c_{1}(\Lcal_{r}) H^{2n}(Y,\QQ(n))$. Hence, inductively we see that 
	$$
		H^{2n}(X,\QQ(n)) \cong H^{2n}(Y,\QQ(n))/\langle c_{1}(\Lcal_{i}) H^{2n}(Y,\QQ(n))\rangle_{1\leq i\leq r}.
	$$
	We are left to show that $c_{1}(\Lcal_{i}) H^{2n}(Y,\QQ(n)) = c_{1}(X\times_{S}^{T}V(\Ecal_{i}))H^{2n}(X,\QQ(n))$. For this let us start with $i= r$. Then by construction $X\times_{S}^{T} V(\Ecal_{\chi_{r}}) \cong X_{r-1}\times_{S}^{\GG_{\textup{m}}} V(\Ecal_{\chi_{r}})$. Inductively, we may replace $Y$ by $X_{i}$, where the claim again follows by construction.\par
\end{proof}

\begin{cor}
\label{cor.T-tors.coho}
	Let $S=\Spec(k)$ be the spectrum of a field and Let $X\rightarrow Y$ be a $T$-torsor of smooth Artin stacks of over $S$. Then 
	$$
		A^{\bullet}(X)_{\QQ}\cong A^{\bullet}(Y)_{\QQ}/\That A^{\bullet}(Y)_{\QQ}.
	$$
\end{cor}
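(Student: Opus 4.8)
The plan is to deduce this from Proposition \ref{prop.T-tors.coho} by reinterpreting motivic cohomology in the appropriate bidegree as classical Chow groups, and then summing over all twists. Recall that for a smooth Artin stack $X$ over the field $k$ one has $H^{p}(X,\QQ(n))\cong A^{n}(X,2n-p)_{\QQ}$, so that in particular $H^{2n}(X,\QQ(n))\cong A^{n}(X,0)_{\QQ}=A^{n}(X)_{\QQ}$ is the ordinary Chow group in codimension $n$, and the cup product on motivic cohomology induces the intersection product on $A^{\bullet}(X)_{\QQ}=\bigoplus_{n}A^{n}(X)_{\QQ}$. Since $X\to Y$ is a $T$-torsor of smooth Artin stacks and $T$ is split, all hypotheses of Proposition \ref{prop.T-tors.coho} are in force.

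First I would fix $n\in\ZZ$ and invoke Proposition \ref{prop.T-tors.coho}, which yields $H^{2n}(X,\QQ(n))\cong H^{2n}(Y,\QQ(n))/\That H^{2n}(Y,\QQ(n))$, where a character $\chi$ acts by multiplication with the first Chern class of the line bundle $L_{\chi}=X\times^{T}_{S}\VV(\Ecal_{\chi})$. Translating through the identification above, this is exactly $A^{n}(X)_{\QQ}\cong A^{n}(Y)_{\QQ}/\That A^{n}(Y)_{\QQ}$, the action of $\chi\in\That$ on $A^{\bullet}(Y)_{\QQ}$ being multiplication by $c_{1}(L_{\chi})$; this is the very action recalled at the start of the section, so there is nothing to reconcile. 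Next I would sum these isomorphisms over all $n$: the Chern class operators are homogeneous of degree one for the grading on $A^{\bullet}(Y)_{\QQ}$, and $c_{1}$ is additive in $\chi$, so the graded subspace generated by all the $c_{1}(L_{\chi})A^{\bullet}(Y)_{\QQ}$ coincides with $\bigoplus_{n}\That A^{n}(Y)_{\QQ}$ and with the one generated by a basis of $\That$. This gives the claimed graded isomorphism $A^{\bullet}(X)_{\QQ}\cong A^{\bullet}(Y)_{\QQ}/\That A^{\bullet}(Y)_{\QQ}$.

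The one point deserving a word of care is that the identification $H^{2n}(X,\QQ(n))\cong A^{n}(X)_{\QQ}$ is applied to the Artin stacks $X$ and $Y$ themselves, not merely to schemes; this is legitimate because motivic cohomology of smooth quotient stacks is computed by the (higher) equivariant Chow groups of Edidin--Graham, as already recorded. I do not expect a genuine obstacle: the substantive work is entirely contained in Proposition \ref{prop.T-tors.coho}, and the present statement is a translation into the language of Chow rings, with the mild bookkeeping observation that only the weight-$n$, degree-$2n$ part of motivic cohomology — i.e. the ordinary Chow groups, with no higher Chow contributions — enters the conclusion.
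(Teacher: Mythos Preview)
Your proposal is correct and follows the same approach as the paper: the paper's proof is the single line ``This follows immediately from Proposition \ref{prop.T-tors.coho},'' and you have simply spelled out the translation $H^{2n}(-,\QQ(n))\cong A^{n}(-)_{\QQ}$ and the summation over $n$ that this sentence implicitly invokes.
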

\begin{proof}
	This follows immediately from Proposition \ref{prop.T-tors.coho}.
\end{proof}

\begin{rem}
\label{rem.scalloped}
Proposition \ref{prop.T-tors} and Proposition \ref{prop.T-tors.coho} can be extended to other cohomology theories in the following way. Let us fix a $T$-torsor $X\rightarrow Y$ of smooth Artin $S$-stacks.\par 
\begin{enumerate}
	\item[(1)] (Rational \'etale localized cohomology theories) 	Let us fix a $T$-torsor $X\rightarrow Y$ of smooth Artin $S$-stacks. Let $M\in \SH(S)_{\QQ,\et}$ be an oriented $E_{\infty}$-ring spectrum and let us denote its pullback to any smooth Artin $S$-stack $Z$ with $M_{Z}$. The orientation of $M$ yields a Chern class map 
	$$
		c_{1}\colon \Pic(Z)\rightarrow H^{2}(Z,M(1))\coloneqq \Hom_{\SH(Z)}(1_{Z},M_{Z}(1)[2]).
	$$
	In the same fashion as before, for any character $\chi\in\That$, we can define $c_{1}(L_{\chi})H^{n}(Y,M)$. \par
	Assume there exists a $p\in\ZZ$ such that $M$ satisfies $$H^{m}(Z,M)\coloneqq \Hom_{\SH(Z)}(1_{Z},M_{Z}[m])=0$$ for all $m>p$. Then 
	$$
		H^{p}(X,M) \cong H^{p}(Y,M)/\langle c_{1}(X\times_{S}^{T}\VV(\Ecal_{\chi_{i}}))H^{p}(Y,M)\rangle.
	$$
	If we take for example $M=\KGL_{\QQ,S}$, the rational $K$-theory spectrum in $\SH(S)$, then we get 
	$$
		K_{0}(X)_{\QQ}^{\et}\cong K_{0}(Y)_{\QQ}^{\et}/\langle c_{1}(X\times_{S}^{T}\VV(\Ecal_{\chi_{i}}))K_{0}(Y)_{\QQ}^{\et}\rangle,
	$$
	where $K_{0}(-)_{\QQ}^{\et}$ denotes the \'etale localized rational $K$-theory.
	\item[(2)] (Integral $K$-theory) The extension to integral $K$-theory is more subtle, as we have to restrict ourselves to certain algebraic stacks\footnote{These are called \textit{scalloped stacks} (cf. \cite{KR1}).} to make sense of the stable homotopy category. For simplicity, we may assume that $X=\quot{X'/H}$ and $Y=\quot{Y'/F}$ are represented by quotients of quasi-projective schemes by diagonalizable group schemes. Then there is a well defined notion of a stable homotopy category $\SH(X)$ resp. $\SH(Y)$ together with a functorial $E_{\infty}$-ring object $\KGL$ that represents equivariant $K$-theory (cf. \cite{KR1}). Further, Bott-periodicity yields an orientation on $\KGL$. In particular, using that $\KH(X)$ and $\KH(Y)$ are connective\footnote{This holds Nisnevich locally by \textit{loc.cit.} and by descent for $\KH$.} (cf. \cite[Thm. 5.7]{HoyKri}), we see that Corollary \ref{cor.T-tors.coho} holds for integral $K_{0}$ in this case, i.e. 
	$$
		K_{0}(X)\cong K_{0}(Y)/\That K_{0}(Y).
	$$
	This result can be glued to the class of so called \textit{scalloped stacks} (cf. \cite{KR1} for the notion of scalloped stacks and the construction of $\SH$). 
\end{enumerate}

\end{rem}

\section{Motivic cohomology of quotients up to isogeny}
\label{sec.mot.isog}
We are now ready to answer the main question of this article (\ref{main-question}). Let us assume that $S$ is connected and let us fix a reductive $S$-group scheme $G$ and assume that it admits a maximal split torus $T$. The non-split case will be discussed in Remark \ref{rem.non.split}.\par 
For completeness, we will work in a more general setting than in the beginning of the introduction.\par
Let $P$ resp. $Q$ be parabolics inside $G$ with Levi-components $L$ resp. $M$. Assume that $T$ is contained in $L$. Let $\varphi\colon L\rightarrow M$ be an isogeny. Then $L$ acts on $G$ via $(l,g)\mapsto lg\varphi(l)^{-1}$. We are interested in the quotient of this action, which we denote by $\quot{G/_{\varphi}L}$ or rather its motive. To do so, we follow the idea of Brokemper in the proof of \cite[Prop. 1.2]{Bro2}.\par As $\varphi$ is an isogeny the image of $T$ is again a maximal torus. In particular, up to conjugation by an element $g_{0}\in G(S)$, we may identify $T$ with $\varphi(T)$. The $g_{0}$-conjugation of $G$ induces an isomorphism $G\rightarrow G$ that is $L$-equivariant, where $L$ acts on the right hand side via $(l,g)\mapsto lgg_{0}^{-1}\varphi(l)^{-1}g_{0}$. In particular, after replacing $M$ resp. $Q$ by their $g_{0}^{-1}$-conjugation, we may assume that $\varphi(T) = T$. Then we have the following embedding $T\hookrightarrow T\times T$, via $t\mapsto (t,\varphi(t))$. The quotient under this embedding is $T\times T/T\cong T$. Thus, the naturally induced morphism $\quot{G/_{\varphi}T}\rightarrow \quot{G/T\times T}$, where $T\times T$ acts on $G$ via $(t,t',g)\mapsto tgt'^{-1}$, is a $T$-torsor. This leaves us with the following picture
$$
	\begin{tikzcd}
		 \quot{G/_{\varphi}L}&\arrow[l,"a",swap] \quot{G/_{\varphi} T}\arrow[r,"b"]&\quot{G/T\times T}.
	\end{tikzcd}
$$
The morphism $b$ is by the above a $T$-torsor. Therefore, we can use Corollary \ref{cor.T-tors.tate} to see that $b$ is also Tate. \par
If $T$ is split, we can say more.  Morphism $a$ yields us an equivalence of motives of $\quot{G/_{\varphi}L}$ and $\quot{G/_{\varphi} T}$ up to Weyl invariance, as in the Chow group case. To be more precise, we have 
$$
	R\Gamma_{S}(\quot{G/_{\varphi}L},\QQ)\simeq R\Gamma_{S}(\quot{G/_{\varphi}T},\QQ)^{W_{L}},
$$
where $W_{L}$ denotes the Weyl group of $T$ inside $L$ by Corollary \ref{cor.chow.bg}. However if $S = \Spec(k)$, the spectrum of a field, we may drop the assumption that $T$ is split and work with the absolute Weyl group instead.\par
We can compute the motive resp. the motivic cohomology of $\quot{G/_{\varphi} T}$ via the motive resp. the motivic cohomology of $\quot{G/T\times T}$ using Proposition \ref{prop.T-tors} and Proposition \ref{prop.T-tors.coho}. \par If $G$ admits a Borel $B$ containing $T$, then by invariance under extensions of unipotent groups, we have a fully faithful functor $\DM(\quot{T\bs G/B})\hookrightarrow \DM(\quot{G/T\times T})$ (cf. Lemma \ref{lem.unip.ext}). Therefore, with all of the above we see that the $T$-equivariant motivic cohomology resp. motive of the flag variety $G/B$ yields results about the motivic cohomology resp. the motive of $\quot{G_{\varphi}/L}$. But the author has shown\footnote{Even though in the cited article, we assume that $S$ is affine, all the arguments go through in $\DM$ without this assumption.} in \cite{YayT} that the motive of $\quot{T\bs G/B}$ is computed by $M_{S}(G/B)\otimes M_{S}(BT)$. \par 
If further $S=\Spec(k)$, we have seen that
	$$
		M_{S}(\B{T})\cong \bigotimes_{i=1}^{r}M_{S}(\B{\GG_{\textup{m}}}) = \bigotimes_{i=1}^{r}\bigoplus_{j\geq 0}1_{k}\langle i\rangle,
	$$
which is completed Tate (cf. Example \ref{ex.BGL}). As the motive of the flag variety $G/B$ is also Tate (cf. 
Example \ref{ex.flag}), we see that $M_{S}(T\bs G/B)$ is completed Tate. Summarizing all of the above yields the following theorem.

\begin{thm}
\label{thm.main}
Assume that $S=\Spec(k)$ is the spectrum of a field. 
Then the motives $R\Gamma_{S}(\quot{G/_{\varphi}L},\QQ)$ and $M_{S}(\quot{G/_{\varphi}L})$ are completed Tate motives in $\DM(S)$. We can compute the Chow ring of $\quot{G/_{\varphi}L}$ as
	$$
		A^{\bullet}(\quot{G/_{\varphi}L})_{\QQ}\cong \left( A^{\bullet}_{T}(G/T)_{\QQ}/\That A^{\bullet}_{T}(G/T)_{\QQ}\right)^{W_{L}},
	$$
	where $W_{L}$ denotes the Weyl group of $T$ inside $L$.
\end{thm}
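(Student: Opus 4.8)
The plan is to assemble Theorem~\ref{thm.main} from the three main inputs developed in the preceding sections, applied to the zigzag $\quot{G/_{\varphi}L}\xleftarrow{a}\quot{G/_{\varphi}T}\xrightarrow{b}\quot{G/T\times T}$. First I would record that the setup of Section~\ref{sec.mot.isog} arranges, after replacing $M,Q$ by a $g_{0}^{-1}$-conjugate, that $\varphi(T)=T$, so that the embedding $T\hookrightarrow T\times T$ via $t\mapsto(t,\varphi(t))$ has quotient $T\times T/T\cong T$ and hence $b$ is a $T$-torsor. By Corollary~\ref{cor.T-tors.tate} the map $b$ is Tate, and Proposition~\ref{prop.T-tors} gives a finite filtration on $M_{\quot{G/T\times T}}(\quot{G/_{\varphi}T})$ with successive cofibers Tate twists of $1_{\quot{G/T\times T}}$. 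So $M_{S}(\quot{G/_{\varphi}T})$ is a finite extension of twists and shifts of $M_{S}(\quot{G/T\times T})$, and likewise for $R\Gamma_{S}$; it therefore suffices to show $M_{S}(\quot{G/T\times T})$ and $R\Gamma_{S}(\quot{G/T\times T},\QQ)$ are completed Tate.

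Next I would handle $\quot{G/T\times T}$. Since $T$ is split, $G$ admits a Borel $B\supseteq T$ after an \'etale cover (and over a field one may reduce to the split case or pass to the absolute Weyl group as in Corollary~\ref{cor.chow.bg}); Lemma~\ref{lem.unip.ext} gives a fully faithful $\DM(\quot{T\bs G/B})\hookrightarrow\DM(\quot{G/T\times T})$ under which the units correspond, so $R\Gamma_{S}(\quot{G/T\times T},\QQ)\simeq R\Gamma_{S}(\quot{T\bs G/B},\QQ)$, and similarly for the motive by Remark~\ref{rem.tate}. By \cite{YayT} one has $M_{S}(\quot{T\bs G/B})\simeq M_{S}(G/B)\otimes M_{S}(\B T)$. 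Over a field, Example~\ref{ex.flag} shows $M_{S}(G/B)$ is a finite sum of Tate twists $\bigoplus_{w\in W}1_{S}\langle l(w)-n\rangle$, hence Tate, while Example~\ref{ex.BGL} gives $M_{S}(\B T)\simeq\bigotimes_{i=1}^{r}\bigoplus_{j\geq 0}1_{k}\langle j\rangle$, which is completed Tate. A tensor product of a Tate motive with a completed Tate motive is completed Tate (the cocomplete stable subcategory generated by the $1(n)$ is closed under $\otimes$ with Tate twists, hence under $\otimes M_{S}(G/B)$), so $M_{S}(\quot{G/T\times T})$ is completed Tate; combining with the first paragraph, so are $M_{S}(\quot{G/_{\varphi}L})$ via morphism $a$. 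For the $R\Gamma$ statement I would either dualize or rerun the argument, using that $\DTM(S)$ and its cocompletion are stable under the relevant operations, and that taking $W_{L}$-invariants (a finite limit, by Remark~\ref{rem.invariants.Tate}) preserves the property; indeed by Corollary~\ref{cor.chow.bg} (applied to $X=G$ with the $L$-action, factoring through $N_{L}(T)$) we get $R\Gamma_{S}(\quot{G/_{\varphi}L},\QQ)\simeq R\Gamma_{S}(\quot{G/_{\varphi}T},\QQ)^{W_{L}}$.

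For the Chow ring computation I would take motivic cohomology of the equivalence $R\Gamma_{S}(\quot{G/_{\varphi}L},\QQ)\simeq R\Gamma_{S}(\quot{G/_{\varphi}T},\QQ)^{W_{L}}$. Since taking $W_{L}$-invariants is the image of an idempotent and $\Hom$ out of $1$ commutes with this, $A^{\bullet}(\quot{G/_{\varphi}L})_{\QQ}\cong A^{\bullet}(\quot{G/_{\varphi}T})_{\QQ}^{W_{L}}$ by Remark~\ref{rem.fix}. Then apply Corollary~\ref{cor.T-tors.coho} to the $T$-torsor $b$, giving $A^{\bullet}(\quot{G/_{\varphi}T})_{\QQ}\cong A^{\bullet}(\quot{G/T\times T})_{\QQ}/\That A^{\bullet}(\quot{G/T\times T})_{\QQ}$; finally, Lemma~\ref{lem.unip.ext} identifies $A^{\bullet}(\quot{G/T\times T})_{\QQ}$ with $A^{\bullet}(\quot{T\bs G/B})_{\QQ}=A^{\bullet}_{T}(G/B)_{\QQ}$, and since $G/B\to G/T$ is a split-unipotent ($R_{u}(B)$-)bundle one further has $A^{\bullet}_{T}(G/B)_{\QQ}\cong A^{\bullet}_{T}(G/T)_{\QQ}$; the $\That$-action on $A^{\bullet}_{T}(G/T)_{\QQ}$ induced by the line bundles $L_{\chi}$ is the one from Proposition~\ref{prop.T-tors.coho}, and $W_{L}$-equivariance of all these identifications (the $W_{L}$-action on $G/T$ being compatible) yields $A^{\bullet}(\quot{G/_{\varphi}L})_{\QQ}\cong\big(A^{\bullet}_{T}(G/T)_{\QQ}/\That A^{\bullet}_{T}(G/T)_{\QQ}\big)^{W_{L}}$.

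The main obstacle I anticipate is the bookkeeping of the $W_{L}$-actions: one must check that the $W_{L}$-equivariant structure on $\quot{G/_{\varphi}T}$ coming from the normalizer factorization $\quot{G/_{\varphi}T}\to\quot{G/_{\varphi}N_{L}(T)}\to\quot{G/_{\varphi}L}$ is compatible with the $T$-torsor structure $b$ (so that Corollary~\ref{cor.T-tors.coho} can be applied $W_{L}$-equivariantly and the invariants commute with the quotient by $\That$), and likewise that the identification $A^{\bullet}(\quot{G/T\times T})_{\QQ}\cong A^{\bullet}_{T}(G/T)_{\QQ}$ intertwines the residual $W_{L}$-action with the natural Weyl action on $G/T$. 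Everything else is a direct concatenation of the cited results, with the split-vs.-nonsplit distinction absorbed as in Corollary~\ref{cor.chow.bg} (passing to a finite Galois extension and taking Galois invariants, replacing $W_{L}$ by the absolute Weyl group).
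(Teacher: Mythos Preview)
Your proposal is correct and follows essentially the same approach as the paper: the paper's proof is literally ``This follows from the discussion above,'' and that discussion is exactly the zigzag argument you spell out (Corollary~\ref{cor.T-tors.tate} and Proposition~\ref{prop.T-tors} for $b$, Corollary~\ref{cor.chow.bg} for $a$, Lemma~\ref{lem.unip.ext} and \cite{YayT} for $\quot{G/T\times T}\simeq\quot{T\bs G/B}$, Examples~\ref{ex.flag} and~\ref{ex.BGL} for completed Tateness, Corollary~\ref{cor.T-tors.coho} for the Chow computation). One tiny slip: the affine bundle goes $G/T\to G/B$ (fiber $B/T\cong R_{u}(B)$), not the other way, but your conclusion $A^{\bullet}_{T}(G/B)_{\QQ}\cong A^{\bullet}_{T}(G/T)_{\QQ}$ is unaffected; also, since $S=\Spec(k)$ and $T$ is split maximal, $G$ is split and a Borel containing $T$ exists outright, so the \'etale-cover hedge is unnecessary.
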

\begin{proof}
	This follows from the discussion above.
\end{proof}

\begin{rem}
	The isomorphism in Theorem \ref{thm.main} is also valid in the case, where $S$ is not the spectrum of a field, after replacing the Chow groups with the $(2n,n)$ motivic cohomology group for $n\in \ZZ$ as in Proposition \ref{prop.T-tors.coho}.
\end{rem}

\begin{rem}
\label{rem.Chow.Bro}
	If $G$ is a split reductive group over a field with split maximal torus $T$, Brokemper has shown that one can give a more explicit computation of the Chow ring of $\quot{G_{\varphi}/L}$ using the computations of Brion \cite{Brion} (cf. \cite[Prop. 1.2]{Bro2}). To be more precise, we can write the last isomorphism of Theorem \ref{thm.main} as
	$$
		A^{\bullet}(\quot{G/_{\varphi}L})_{\QQ}\cong S^{W_{L}}/(f-\varphi f\mid f\in S_{+}^{W_{G}}),
	$$
	where $S=\Sym_{\QQ}(\That) \cong A^{\bullet}_{T}(\ast)_{\QQ}$, $S_{+}$ are the elements of positive degree and $W_{G}$ is the Weyl group of $T$ in $G$. A more detailed computation can be found in the proof of \cite[Prop. 1.2]{Bro2}.
\end{rem}

Remark \ref{rem.scalloped} enables us to get an analogous result of Remark \ref{rem.Chow.Bro} for rational $K_{0}$.

\begin{prop}
\label{prop.chev}
	In the setting above, let us assume that $S=\Spec(k)$ is the spectrum of a field, the derived group of $G$ is simply connected and $T$ is a split maximal torus. Then we have
	$$
		K_{0}(\quot{G/_{\varphi}L})_{\QQ} \cong R(T)_{\QQ}^{W_{L}}/(f-\varphitilde f\mid f\in R(T)_{\QQ}^{W_{G}}).
	$$
\end{prop}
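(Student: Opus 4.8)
The plan is to mirror the structure of the proof of Theorem \ref{thm.main}, but replacing rational Chow theory by rational $K_0$ at every step, using part (2) of Remark \ref{rem.scalloped} to handle the $T$-torsor $b$. First I would factor the situation through the zigzag
$$
	\quot{G/_{\varphi}L}\xleftarrow{a} \quot{G/_{\varphi}T}\xrightarrow{b}\quot{G/T\times T},
$$
where $b$ is a $T$-torsor (as established in Section \ref{sec.mot.isog}) and $a$ is the composite $\quot{G/_{\varphi}T}\to\quot{G/_{\varphi}N_L(T)}\to\quot{G/_{\varphi}L}$ studied via Corollary \ref{cor.chow.bg}. Applying Corollary \ref{cor.chow.bg} to $a$ together with Remark \ref{rem.fix} at the level of equivariant $K$-theory of the smooth quotient stack (invariants under the finite group $W_L$ are exact on rational $K$-theory, being a summand cut out by an idempotent) gives
$$
	K_0(\quot{G/_{\varphi}L})_{\QQ}\cong K_0(\quot{G/_{\varphi}T})_{\QQ}^{W_L},
$$
provided one checks the $W_L$-torsor argument applies to $K_0$ of the relevant stacks; since these are quotients of quasi-affine smooth schemes by diagonalizable (indeed, extensions by unipotents of) group schemes, this is exactly the scalloped setting of Remark \ref{rem.scalloped}(2).

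Next I would handle $b$. By Remark \ref{rem.scalloped}(2), since $\quot{G/_{\varphi}T}$ and $\quot{G/T\times T}$ are quotients of smooth qcqs schemes by diagonalizable group schemes (the relevant tori, up to passing through the $B$-quotient via Lemma \ref{lem.unip.ext} to replace $G/T\times T$ by $\quot{T\bs G/B}$), we get
$$
	K_0(\quot{G/_{\varphi}T})_{\QQ}\cong K_0(\quot{G/T\times T})_{\QQ}/\That K_0(\quot{G/T\times T})_{\QQ}.
$$
Now I would identify the right-hand side concretely. Using Lemma \ref{lem.unip.ext} (invariance under split unipotent extensions, applied to $B\to T$), $K_0(\quot{G/T\times T})_{\QQ}\cong K_0(\quot{T\bs G/B})_{\QQ}$; and since the derived group of $G$ is simply connected and $T$ is split, the projective-bundle / cellular-fibration structure of $G/B$ over the point combined with the classical computation of $K_0^{T\times T}(G/B)$ (here is where simple-connectedness enters, guaranteeing $R(T)$ is free over $R(T)^{W_G}$ and that $K_0^T(G/B)\cong R(T)\otimes_{R(T)^{W_G}}R(T)$, cf. the arguments of Uma--Krishna and Brokemper) gives
$$
	K_0(\quot{T\bs G/B})_{\QQ}\cong R(T)_{\QQ}\otimes_{R(T)_{\QQ}^{W_G}}R(T)_{\QQ},
$$
where the two $R(T)$-factors correspond to the two copies of $T$ acting on the left and on the right, and the $\That$-action in the torsor formula is via multiplication by $1-e^{\chi}$ along the ``left minus $\varphi$ of right'' direction induced by the graph embedding $t\mapsto(t,\widetilde\varphi(t))$. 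Quotienting by $\That$ then collapses one tensor factor, yielding $R(T)_{\QQ}/(f-\widetilde\varphi f\mid f\in R(T)_{\QQ}^{W_G})$; taking $W_L$-invariants from the first step gives the claimed formula.

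The main obstacle I expect is the bookkeeping in the second-to-last step: tracking precisely how the $\That$-action from the $T$-torsor $b$ translates, under the identification $K_0(\quot{T\bs G/B})_{\QQ}\cong R(T)_{\QQ}\otimes_{R(T)_{\QQ}^{W_G}}R(T)_{\QQ}$, into the relation $f\sim\widetilde\varphi f$ for $f\in R(T)^{W_G}$ — i.e., verifying that the line bundles $L_\chi=X\times^T\kappa(\chi)$ have $K$-theoretic first Chern classes $1-e^\chi$ acting as the difference of the two structure maps twisted by $\widetilde\varphi$, and that after inverting enough the ideal generated by $\{1-e^\chi\}$ matches the ideal $(f-\widetilde\varphi f)$. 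This is a rational-coefficients computation where $1-e^\chi$ and the additive $c_1$ differ by a unit, so it should go through, but it requires care with the isogeny $\varphi$ and the auxiliary conjugation $g_0$ that was used to arrange $\varphi(T)=T$. A secondary technical point is confirming that $\quot{G/_{\varphi}L}$ itself (not just the intermediate stacks) is scalloped so that Remark \ref{rem.scalloped}(2) applies to it directly for the final identification of $K_0$; alternatively one avoids this by only ever applying the scalloped statement to the quotient-by-diagonalizable stacks and transporting along the finite $W_L$-invariants, which is the route I would take. Once these identifications are in place, the proof is a concatenation of the two displayed isomorphisms, and I would simply write ``This follows from the discussion above together with Remark \ref{rem.scalloped} and Corollary \ref{cor.chow.bg}, arguing as in Remark \ref{rem.Chow.Bro}.''
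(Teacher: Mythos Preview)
Your overall architecture matches the paper's: reduce via the zigzag, handle $b$ by the $T$-torsor formula of Remark~\ref{rem.scalloped}(2), identify $K_0(\quot{G/T\times T})_{\QQ}$ with $K_0^T(G/B)_{\QQ}$ and invoke the classical description $R(T)_{\QQ}\otimes_{R(T)_{\QQ}^{W_G}}R(T)_{\QQ}$, then pass to $W_L$-invariants. However there is a genuine gap at step $a$.

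You write that $K_0(\quot{G/_{\varphi}L})_{\QQ}\cong K_0(\quot{G/_{\varphi}T})_{\QQ}^{W_L}$ follows from Corollary~\ref{cor.chow.bg} together with Remark~\ref{rem.fix}. It does not. Corollary~\ref{cor.chow.bg} and the underlying Theorem~\ref{thm.motive.BG} are statements in $\DM$; applied to the $K$-theory spectrum they compute \emph{\'etale-localized} rational $K$-theory of the stack, not the genuine equivariant $K_0$ appearing in the proposition. The factorization $a$ splits as the $W_L$-torsor $\quot{G/_{\varphi}T}\to\quot{G/_{\varphi}N_L(T)}$ (which is fine rationally) followed by the $L/N_L(T)$-bundle $\quot{G/_{\varphi}N_L(T)}\to\quot{G/_{\varphi}L}$. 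Your justification via the ``scalloped setting'' does not cover the second map: $L$ is not diagonalizable, so $\quot{G/_{\varphi}L}$ is not of the form required in Remark~\ref{rem.scalloped}(2), and Proposition~\ref{prop.1.BG} (whose proof uses \'etale descent in $\DM$) does not transfer to genuine $K$-theory of stacks. The paper is explicit about this: it says that for $K_0$ one cannot produce the analogue of Corollary~\ref{cor.chow.bg} from Theorem~\ref{thm.motive.BG}, and instead imports the isomorphism $K_0(\quot{G/_{\varphi}L})_{\QQ}\cong K_0(\quot{G/_{\varphi}T})_{\QQ}^{W_L}$ from Krishna's equivariant $G$-theory result \cite[Lem.~9.2]{KriRR}. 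You need that input (or an independent proof of it).

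There is a second, smaller gap. Even granting $K_0(\quot{G/_{\varphi}T})_{\QQ}\cong R(T)_{\QQ}/I$ with $I=(f-\varphitilde f\mid f\in R(T)_{\QQ}^{W_G})$, taking $W_L$-invariants of a quotient need not give the quotient of the invariants: one must check $I\cap R(T)_{\QQ}^{W_L}=I\cdot R(T)_{\QQ}^{W_L}$. The paper handles this by the Pittie--Steinberg theorem that $R(T)$ is free (hence faithfully flat) over $R(T)^{W_L}$, which is exactly where the simply-connected hypothesis on the derived group is used. Your proposal mentions simple connectedness only to justify $K_0^T(G/B)\cong R(T)\otimes_{R(T)^{W_G}}R(T)$, but you will need it again at this last step.
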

\begin{proof}
	For $K_{0}$, we could not produce an analogue of Corollary \ref{cor.chow.bg} using Theorem \ref{thm.motive.BG} and thus have to use a result of Krishna on equivariant $G$-theory (cf. \cite[Lem. 9.2]{KriRR}). For completion we recall the main argument.\par 
	First, we may replace $Q$ and $M$ by $\prescript{g_{0}}{}Q$ and $\prescript{g_{0}}{}M$ and assume that $\varphi(T) = T$. In particular, $\varphitilde$-conjugation of $G$ by $T$ is just $\varphi$-conjugation.
	\par  
	Now we embed $T$ into $T\times_{S}T$ by $t\mapsto (t,\varphi(t))$. Let $T\times_{S}T$ act on $G$ by $(t,t').g \coloneqq tgt'^{-1}$. This yields a morphism $\quot{G/_{\varphi}T}\rightarrow \quot{G/T\times_{S}T}$. This is $T\cong T\times_{S}T/T$-torsor. Thus, by Remark \ref{rem.scalloped}, we have 
	\begin{align*}
		K_{0}(\quot{G/_{\varphi} T})_{\QQ}&\cong K_{0}( \quot{G/T\times_{S}T})_{\QQ}/\That K_{0}( \quot{G/T\times_{S}T})_{\QQ}.
	\end{align*}
	By homotopy invariance, we have $K_{0}(\quot{G/T\times_{S}T})_{\QQ} \cong K_{0}^{T}(G/B)_{\QQ}$. Therefore, we are reduced to classical statements about $T$-equivariant $K$-theory of flag varieties (cf. \cite{Uma}) and get
	$$
		K_{0}(\quot{G/_{\varphi}T})_{\QQ}\cong R(T)_{\QQ}/(f-\varphitilde f\mid f\in R(T)_{\QQ}^{W_{G}}).
	$$
	It follows\footnote{Even though Krishna works with $G=\GL_{n}$ his proof of \cite[Lem. 9.2]{KriRR} goes through in our case.} from \cite[Lem. 9.2]{KriRR} that 
	$$
	K_{0}(\quot{G/_{\varphi}L})_{\QQ}\cong K_{0}(\quot{G/_{\varphi}T})_{\QQ}^{W_{L}}.
	$$
	Thus, it suffices to show that $IR(T)_{\QQ}^{W_{L}} = IR(T)_{\QQ}\cap R(T)^{W_{L}}_{\QQ}$, where $I=(f-\varphitilde f\mid f\in R(T)_{\QQ}^{W_{G}})$, but this follows from faithfully flatness of $R(T)^{W_{L}}\hookrightarrow R(T)$ (cf. \cite[Thm. 1.2]{Stein} - see also the proof of \cite[Prop. 1.2]{Bro2} resp. \cite[Prop. 2.3.2]{Bro1} for a detailed argument in the Chow group case).
\end{proof}

We want to give two motivating examples of quotients $\quot{G/_{\varphi}L}$ as above, that appear naturally and apply Theorem \ref{thm.main} to see that their motives are Tate. The classifying stack of finite groups of Lie-type and the stack of $G$-zips. Both are examples in characteristic $p>0$. In the last section, we want to use Theorem \ref{thm.main} to give an idea how we want to approach geometric representation theory of finite groups of Lie-type $G^{F}$ by relating it motivically to geometric representation of the Langlands dual of $G$ (see below for the notation).

\begin{example}
\label{ex.fin-group}
	Let us assume that $S=\Spec(\FF_{q})$ be a finite field of characteristic $p>0$. We set $\varphi\colon G\rightarrow G$ to be the $q$-Frobenius. This is an isogeny and thus, we can apply Theorem \ref{thm.main} in this setting. Further, the stack $\quot{G/_{\varphi}G}$  is isomorphic to $\B{G^{F}}$, where $G^{F}$ is the stabilizer group scheme of the neutral element (cf. \cite[Lem. 2.1]{Bro2}). It is well known that $G^{F}(\FFbar_{q})\cong G(\FF_{q})$, where $\FFbar_{q}$ denotes an algebraic closure of $\FF_{q}$. Thus, we see that the $\FF_{q}$-motive of the classifying stack of a finite group of Lie-type is completed Tate.
\end{example}

One of Brokemper's applications of his computations is the computation of the Chow ring of $G$-zips. In a similar fashion we will apply the above results and show that the motive of the stack of $G$-zips over a field is completed Tate.

\begin{example}
\label{ex.gzip.tate}
	Let $k$ be a field of characteristic $p>0$ and let $S=\Spec(k)$.
	Let $G, P,Q$ be as above. Let us denote the unipotent radical of $P$ resp. $Q$ with $R_{u}(P)$ resp. $R_{u}(Q)$. Further, let $\varphi\colon P/R_{u}(P)\rightarrow Q/R_{u}(Q)$ be an isogeny. The datum $\Zcal\coloneqq (G,P,Q,\varphi)$ is called \textit{algebraic zip-datum}. To every algebraic zip-datum like above, we can associate the group 
	$$
		E_{\Zcal}\coloneqq \{ (p,q)\in P\times Q\mid \varphi(\pbar) = \qbar\}.
	$$
	The group $E_{\Zcal}$ acts on $G$ via conjugation $(p,q).g \coloneqq pgq^{-1}$.  The quotient stack $\GZip\coloneqq\quot{G/E_{\Zcal}}$ is called the stack of \textit{G-zips}. There are also alternative constructions using a Tannakian formalism on the stack of $F$-zips (cf. \cite{PWZ}). In \textit{op.cit.} there is also an explicit description of the points of $\GZip$. Let $L\subseteq P$ be a Levi-component of $P$. Then as seen in the proof of \cite[Thm. 2.4.4]{Bro1} there is a split exact sequence
	$$
		1\rightarrow R_{u}(P)\times R_{u}(Q)\rightarrow E_{\Zcal}\rightarrow L\rightarrow 1,
	$$
	where the splitting is induced by $L\hookrightarrow E_{\Zcal}$, $l\mapsto (l,\varphi(l))$.
	Therefore, by homotopy invariance, we have $M_{S}(\GZip) \simeq M_{S}(\quot{G/_{\varphi} L})$ which is completed Tate by Theorem \ref{thm.main} and the discussion before the theorem.
\end{example}

\begin{rem}[The non-split case]
\label{rem.non.split}
Let us consider the setting above, where $T$ is not necessarily split. Then Corollary \ref{cor.T-tors.tate} does not work and we cannot prove that $M_{S}(\quot{G/_{\varphi}L}$ is Tate. This is because, we have to pass to a finite Galois covering of $S$ to obtain a splitting. However, the problem resolves once we enlarge the category of Tate motives by adding motives coming from finite Galois coverings. If $S=\Spec(k)$ these are known as \textit{Artin-Tate motives} and the relation with the motivic t-structure on Tate motives was explained by Wildeshaus \cite{WildArtin}\footnote{Wildehaus defines Artin motives as motives coming of $0$-dimensional projective schemes. The category of Artin-Tate motives is then generated by Tate and Artin motives.}. \par
Let $X$ be an $S$-Artin stack and $M\in \DM(X)$. Through Lemma \ref{lem.torsor.fixed}, we see that if there exists a finite Galois covering $S'\rightarrow S$ such that $M_{S'}$ is Tate, then $M$ is Artin-Tate (even though non-standard one can take this as a definition).  In particular, we see that $M_{S}(\quot{G/_{\varphi}L}$ is contained in the cocomplete stable subcategory generated by Artin-Tate motives, even if $T$ is not split.  
\end{rem}

\section{Generalizations}
\label{sec.gen}

In this section, we want to give an overview on the integral version of Theorem \ref{thm.motive.BG} and Theorem \ref{thm.main}. We want to mention three questions that came up naturally during the work on this article that we want to address in the future.
\begin{enumerate}
	\item[(1)] Under what assumptions can we transport all of these results to motives defined via Spitzweck's motivic cohomology ring spectrum?
	\item[(2)] Is it enough to invert the residue characteristics of the base for all of our results?
	\item[(3)] Can the results of this article be extended to other cohomology theories?
\end{enumerate}

Let us go a bit more into detail. So, let $S=\Spec(k)$ be the spectrum of a field and $X$ be an $S$-scheme locally of finite type. further, let $G$ be a split reductive group over $k$ with split maximal torus $T$ and associated Weyl group $W$. Assume $G$ acts on $X$. \\ \par 

Question (1) is rather straightforward. For Chow groups one can see that if $A^{\bullet}_{G}(X)\cong A^{\bullet}_{T}(X)^{W}$ holds integrally if any only if $G$ is special. But if we assume that $G$ is special, then any $G$-torsor is trivial \textit{Zariski}-locally. If we define integral motives $\DM_{\ZZ}$ on Prestacks via right Kan extension of Spitzweck motives (cf. \cite{Spitz}), we see that $\DM_{\ZZ}$ satisfies Nisnevich and in particular Zariski descent. Thus, up to technicalities, we expect that all the arguments after Section \ref{sec.N} go through. Crucially, Section \ref{sec.fin} has to be worked out in this context, as both Ayoub and Cisinski-D\'eglise use rational coefficients. This is not surprising, as one can see that the particular motivic behavior of torsors under finite groups should yield \'etale descent. We hope that in the case of special group there is a work around.\\ \par

Question (2) is addressed in a similar fashion, instead of Spitzweck motives, we can use \'etale motive (cf. \cite{etMot}). As these still satisfy \'etale descent again all the arguments after Section \ref{sec.N} should go through. We expect that inverting the residue characteristics should be enough to recover the statements of Section \ref{sec.fin} in this case. But still one needs to prove the necessary results, which we expect to be rather straightforward.\\ \par

Question (3) needs more careful treatment. The results about $T$-torsors of Section \ref{sec.T-tors} can be extended to other oriented cohomology theories, as we have seen. Section \ref{sec.N} is more difficult as it boils down to vanishing results on cohomology theories. The key part is Proposition \ref{prop.1.BG}. We expect that this proposition still holds for modules over the \'etale $K$-theory ring spectrum but we did not check this thoroughly. If one wants to work with genuine $K$-theory, then this statement can not be proven via \'etale descent and thus needs a more careful treatment. For other cohomology theories one can possibly give a precise vanishing assumption to extend the results of this article.\par 
For completion let us note, that the computation of the global sections in Theorem \ref{thm.motive.BG} will not work for arbitrary cohomology theories. If one takes for example Chow-Witt cohomology and $G=\SL_{2}$, then the Chow-Witt ring of $\B{\GG_{\textup{m}}}$ is trivial, while the Chow-Witt ring of $\B{G}$ is not (cf. \cite{LevineEx}).

\begin{appendix}

\section{Existence of maximal tori over Pr\"ufer domains}
\label{sec.app}
The following is essentially from the authors notes of a seminar talk from Torsten Wedhorn and we do not claim originality (cf.  \cite{Tors}). I want to thank him for allowing me to use his ideas.
\par 
For our main result (Theorem \ref{thm.main}), we need to work with reductive group schemes that admit maximal tori. As we mentioned in Section \ref{sec.gen} not every reductive group scheme admits such a maximal tori. In this short appendix, we want to prove that a reductive group scheme over sufficiently nice base, e.g. Dedekind domains, with quasi-split generic fibre admits a maximal torus. In fact, to prove this, we only need to assumptions on the base. First, we will need that the base is affine and secondly the stalks of the base are valuation rings. The following idea was communicated to us by Torsten Wedhorn. \par
Rings with the property that the stalks are valuation rings were studied before and we will give them a name according to literature.

\begin{defi}[\protect{\cite{Pruf}}]
	An integral domain $A$ is called \textit{Pr\"ufer domain}, if for any prime $\pfr\subseteq A$, the localization $A_{\pfr}$ is a valuation ring.
\end{defi}

\begin{example}
\label{ex.app}
	Any Dedekind domain is a Pr\"ufer domain conversely, any noetherian Pr\"ufer domain is in fact a Dedekind domain (cf. \cite[00II]{stacks-project}).
	So, Pr\"ufer domains can be seen as the non-noetherian analog of Dedekind domains.\par 
	Another example of Pr\"ufer domains, are so called B\'ezout domains, i.e. integral domains in which every finitely generated ideal is principal. In particular, the Picard group of a B\'ezout domain is trivial. This includes all PID's and valuation rings, but also rings such as the algebraic integers $\overline{\ZZ}$ (cf. \cite[pp. 243-245]{Bez} for more examples).
\end{example}

In the following let us fix a Pr\"ufer domain $A$ and set $S\coloneqq \Spec(A)$.

\begin{lem}
\label{lem.app}
	Let us denote $K\coloneqq \Quot(A)$. Further, let  $X\rightarrow S$ be a proper morphism of finite presentation. Then the map $X(A)\rightarrow X(K)$ is bijective.
\end{lem}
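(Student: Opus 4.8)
The plan is to establish injectivity and surjectivity separately, in both cases reducing to the valuative criterion of properness together with the fact that each localization $A_{\pfr}$ is a valuation ring whose fraction field is again $K$ (since $A$ is a domain).

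For injectivity I would use only that $X\to S$ is separated. Given two sections $s_{1},s_{2}\colon \Spec(A)\to X$ that agree after restriction to $\Spec(K)$, their equalizer $E$ — obtained by pulling back the diagonal $X\to X\times_{S}X$ — is a closed subscheme of $\Spec(A)$, and it contains the generic point of $\Spec(A)$. Hence the underlying closed set of $E$ is all of $\Spec(A)$, and since $A$ is a domain the scheme $\Spec(A)$ is reduced, so the surjective closed immersion $E\hookrightarrow\Spec(A)$ is an isomorphism; thus $s_{1}=s_{2}$. I will record this elementary observation — for a domain $R$ with fraction field $L$ and a separated $R$-scheme $Y$, the map $Y(R)\to Y(L)$ is injective — since it is reused in the gluing step.

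For surjectivity, fix $x\in X(K)$. For each prime $\pfr\subseteq A$, the ring $A_{\pfr}$ is a valuation ring with $\Quot(A_{\pfr})=K$, so the valuative criterion applied to $X\to S$ (valid because $X\to S$ is of finite type, separated and universally closed) yields a unique $S$-morphism $x_{\pfr}\colon\Spec(A_{\pfr})\to X$ restricting to $x$ over $K$. I would then spread $x_{\pfr}$ out: choosing an affine open $V\subseteq X$ containing the image of the closed point of $\Spec(A_{\pfr})$, the morphism $x_{\pfr}$ factors through $V$ because $\Spec(A_{\pfr})$ is local, and $V$ is affine of finite presentation over $S$ since $X\to S$ is of finite presentation. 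Writing $A_{\pfr}=\colim_{f\notin\pfr}A_{f}$ and using finite presentation of $\mathcal{O}(V)$ over $A$, the $A$-algebra map $\mathcal{O}(V)\to A_{\pfr}$ factors through some $A_{f}$ with $f\notin\pfr$, giving an $S$-morphism $x_{f}\colon\Spec(A_{f})\to X$ whose restriction to $\Spec(K)$ is again $x$ (because the composite $A_{f}\to A_{\pfr}\to K$ equals the canonical map $A_{f}\to K$).

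Finally I would glue. Since $\Spec(A)$ is quasi-compact, finitely many of the basic opens $D(f)$ obtained above cover it, say $D(f_{1}),\dots,D(f_{n})$ with associated $A_{f_{i}}$-points $x_{f_{i}}$, each restricting to $x$ over $K$. On an overlap $D(f_{i}f_{j})=\Spec(A_{f_{i}f_{j}})$, the two restrictions of $x_{f_{i}}$ and $x_{f_{j}}$ agree over $K$, and $A_{f_{i}f_{j}}$ is a domain with fraction field $K$, so they coincide by the injectivity observation applied to the separated scheme $X$. As morphisms into the scheme $X$ form a Zariski sheaf, the $x_{f_{i}}$ glue to a section $\tilde{x}\in X(A)$, and $\tilde{x}$ restricts to $x$ over $K$ because the generic point of $\Spec(A)$ lies in some $D(f_{i})$. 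I expect the main obstacle to be the spreading-out step — passing from the stalk-wise $A_{\pfr}$-points to genuine $A_{f}$-points over an open neighbourhood, while keeping compatibility with the original $K$-point — which is precisely where finite presentation of $X\to S$ is indispensable; the gluing is then a routine cocycle argument furnished by separatedness.
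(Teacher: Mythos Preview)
Your proof is correct and follows essentially the same route as the paper's: lift the $K$-point to each stalk $\Ocal_{S,s}=A_{\pfr}$ via the valuative criterion, spread out to an open neighbourhood using finite presentation, and glue via uniqueness. The paper's argument is a four-line sketch of exactly this; your version is simply more explicit, in particular isolating injectivity via the equalizer/diagonal argument, whereas the paper leaves uniqueness implicit in the phrase ``unique lift''.
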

\begin{proof}
	Let $\sigma\in X(K)$. By the valuative criterion for properness, we can find for any $s\in S$ a unique lift $\sigma_{s}\in X(\Ocal_{S,s})$ of $\sigma$. As $X$ is of finite presentation, we can find an open neighborhood $U_{s}$ of $s$ and a lift $\sigma_{U_{s}}\in X(U_{s})$ of $\sigma_{s}$. By uniqueness of the lifts $\sigma_{s}$, the $\sigma_{U_{s}}$ glue to a unique lift $\sigma_{A}\in X(A)$ of $\sigma$.
\end{proof}

Applying this lemma to the scheme representing Borels in a reductive $S$-group scheme $G$, we see that if the generic fiber of $G$ is quasi-split, then $G$ is quasi-split. The existence of a maximal torus in $G$ follow now from the fact maximal tori contained in $B$ are a $R_{u}(B)$-torsor. Writing everything out we get the following proposition.

\begin{prop}
\label{prop.app}
	Let $G$ be a reductive $S$-group scheme. If the generic fibre of $G$ is quasi-split, then $G$ admits a Borel pair. Further, if $\Pic(A)=0$ and the generic fibre of $G$ is split, then $G$ is split.
\end{prop}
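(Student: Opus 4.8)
The plan is to derive both assertions from Lemma~\ref{lem.app}, applied to the proper $S$-schemes parametrizing Borel subgroups (and, for the quasi-split refinement, the Dynkin diagram), together with the triviality of torsors under vector groups over the affine base $S=\Spec(A)$.

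For the first assertion, recall from \cite{SGA3} (Exp.~XXII and XXVI) that the functor $\mathrm{Bor}_{G}$ of Borel subgroups of $G$ is represented by a smooth projective $S$-scheme; in particular $\mathrm{Bor}_{G}\to S$ is proper of finite presentation. Writing $K$ for the fraction field of $A$, quasi-splitness of $G_{K}$ means precisely that $\mathrm{Bor}_{G}(K)\neq\emptyset$, so Lemma~\ref{lem.app} yields $\mathrm{Bor}_{G}(A)\neq\emptyset$, i.e.\ a Borel subgroup $B\subseteq G$ over $S$. To upgrade this to a Borel pair I would use that the maximal tori of $G$ contained in $B$ form a torsor under $R_{u}(B)$ acting by conjugation (cf.\ \cite{SGA3}, Exp.~XXII), and that $R_{u}(B)$ is split unipotent (\cite[Exp.~XXVI Prop.~2.1]{SGA3}), so this torsor is a successive extension of torsors under vector groups $\VV(\Ecal)$; since $H^{1}(S,\VV(\Ecal))=H^{1}(S,\Ecal)=0$ for the affine scheme $S$, the torsor is trivial and admits a section, producing a maximal torus $T\subseteq B$ and hence a Borel pair $(B,T)$. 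Applying Lemma~\ref{lem.app} once more to the finite \'etale --- hence proper --- Dynkin diagram scheme shows that a section over $K$ extends over $A$, so $G$ is in fact quasi-split in the sense of \cite{SGA3}, as stated in the surrounding discussion.

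For the second assertion, assume moreover $\Pic(A)=0$ and that $G_{K}$ is split, and keep the Borel pair $(B,T)$ just constructed. The crux is that $T$ is then split. Over the field $K$, splitness of $G_{K}$ provides a split maximal torus $T_{0}$ inside some Borel $B_{0}$; then $H^{1}(K,B_{0})=H^{1}(K,T_{0})=0$ (Hilbert~90, using that $R_{u}(B_{0})$ is split unipotent), so $G(K)$ acts transitively on $\mathrm{Bor}_{G_{K}}(K)$ and every Borel of $G_{K}$ thus contains a split maximal torus. Since all maximal tori of $B_{K}$ are $R_{u}(B_{K})(K)$-conjugate, they are mutually isomorphic, so $T_{K}$ is split. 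Now $T$ is a torus over the integrally closed (each $A_{\pfr}$ is a valuation ring) connected affine scheme $S$ whose generic fibre $T_{K}$ is split; since the map $\pi_{1}(\Spec K)\to\pi_{1}(S)$ on \'etale fundamental groups is surjective, the character sheaf $X^{\ast}(T)$ has trivial monodromy, hence is constant, so $T\cong\GG_{\textup{m}}^{r}$. Over the connected $S$ a split maximal torus forces the root datum of $G$ to be constant, and $\Pic(A)=0$ lets us trivialize the line bundles $\mathfrak{g}_{\alpha}$ underlying the root groups, so that $G$ acquires a pinning and is split.

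The only genuinely non-formal input is the passage from ``$G_{K}$ split'' to ``$T$ split'': it combines the field-theoretic fact that a split reductive group over $K$ has all of its $K$-rational Borels conjugate under $G(K)$ with the spreading-out of splitness from the generic point to $S$, which is exactly where integral closedness of the Pr\"ufer domain $A$ --- and the resulting surjectivity of the fundamental group of the generic point --- enters. Everything else is formal, resting either on Lemma~\ref{lem.app} or on the vanishing of $H^{1}$ of a quasi-coherent sheaf on an affine scheme.
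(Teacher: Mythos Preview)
Your argument is correct and follows the same route as the paper: apply Lemma~\ref{lem.app} to the proper scheme $\mathrm{Bor}_{G}$ to obtain a Borel $B$, then use that maximal tori in $B$ form an $R_{u}(B)$-torsor together with $H^{1}(S,R_{u}(B))=0$ over the affine base to get $T$; for splitness, use surjectivity of $\pi_{1}^{\et}(\Spec K)\to\pi_{1}^{\et}(S)$ (valid since $A$ is normal) to propagate splitness of $T_{K}$ to $T$, and then $\Pic(A)=0$ to split $G$. Two small differences: you supply the bridge from ``$G_{K}$ split'' to ``$T_{K}$ split'' (via transitivity of $G(K)$ on $\mathrm{Bor}_{G_{K}}(K)$ and conjugacy of maximal tori in $B_{K}$), which the paper tacitly assumes; and your Dynkin-diagram remark yields quasi-splitness in the SGA3 sense, which is stronger than the stated ``Borel pair'' conclusion but harmless.
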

\begin{proof}
	Let $K\coloneqq \Quot(A)$ and $\Bor$ be the scheme classifying the Borels of $G$ over $A$. As $G_{K}$ is quasi-split, there exists an element $B_{K}\in \Bor(K)$. The scheme $\Bor$ is proper and of finite presentation (cf. \cite[Exp. XXII Cor. 5.8.3]{SGA3}) and thus we can apply Lemma \ref{lem.app} to $\Bor$ and see that $B_{K}$ lifts uniquely to a Borel $B$ of $G$. Let $U$ denote the unipotent radical of $B$. The functor of maximal tori in $B$ is representable by a smooth affine $S$-scheme $X$ and is further a $U$-torsor (cf. \cite[Exp. XXII Cor. 5.6.13]{SGA3}). But $H^{1}(S,U)=0$, as $S$ is affine (cf. \cite[Exp. XXII Cor. 5.9.6]{SGA3}), and therefore $X\rightarrow S$ admits a section. \par 
	If further $T_{K}$ is split and $\Pic(A)=0$, then we claim that $G$ is split. For this let $\hgbar$ be the geometric point corresponding to $K$. As $S$ is integral and normal (cf. \cite[00IC]{stacks-project}), $T$ splits after passage to a finite Galois cover of $S$ (cf. \cite[Cor. B.3.6]{Conrad}). This induces an action of $\pi_{1}^{\et}(A,\hgbar)$ on the character group $X^{\bullet}(T)$, which is trivial if and only if $T$ is split. As $T_{K}$ is split, the action of $\pi_{1}^{\et}(K,\hgbar)$ on $X^{\bullet}(T_{K})$ is trivial. Since $\pi_{1}^{\et}(A,\hgbar)$ is a quotient of $\pi_{1}^{\et}(K,\hgbar)$ (cf. \cite[0BQM]{stacks-project}), we deduce that $\pi_{1}^{\et}(A,\hgbar)$ acts trivially on $X^{\bullet}(T)$. Therefore, $T$ is split and since $\Pic(A)=0$, we see that $G$ is split (cf. \cite[Exp. XXII Prop. 2.2]{SGA3}).
\end{proof}

\end{appendix}

\addcontentsline{toc}{section}{References}
\bibliographystyle{halpha-abbrv}
\bibliography{motiveG}

\end{document}